\def\red{\color{red}}
\def\ls{\lesssim}
\def\fz{\infty}
\renewcommand{\r}{\right}
\newcommand{\lf}{\left}
\def\ls{\lesssim}
\def\supp{{\mathop\mathrm{\,supp\,}}}
\def\aa{{\mathbb A}}
\def\rr{{\mathbb R}}
\def\rh{{\mathbb R}{\mathbb H}}
\def\rn{{{\rr}^n}}
\def\zz{{\mathbb Z}}
\def\nn{{\mathbb N}}
\def\cc{{\mathbb C}}
\newcommand{\wz}{\widetilde}
\newcommand{\oz}{\overline}
\newcommand{\ca}{{\mathcal A}}
\newcommand{\cd}{{\mathcal D}}
\newcommand{\cm}{{\mathcal M}}
\newcommand{\cp}{{\mathcal P}}
\newcommand{\cs}{{\mathcal S}}
\newcommand{\cx}{{\mathcal X}}
\def\az{\alpha}
\def\lz{\lambda}
\def\blz{\Lambda}
\def\bfai{\Phi}
\def\dz{\delta}
\def\epz{\epsilon}
\def\bz{\beta}
\def\fai{\varphi}
\def\gz{{\gamma}}
\def\bgz{{\Gamma}}
\def\vz{\varphi}
\def\tz{\theta}
\def\sz{\sigma}
\def\wz{\widetilde}
\def\ls{\lesssim}
\def\ol{\overline}
\def\boz{\Omega}
\def\oz{\omega}
\def\esup{\mathop\mathrm{\,ess\,sup\,}}
\def\einf{\mathop\mathrm{\,ess\,inf\,}}
\def\divz{{{\mathop\mathrm {div}}}}
\def\hs{\hspace{0.3cm}}
\def\dint{\displaystyle\int}
\def\dsup{\displaystyle\sup}
\newtheorem{theorem}{Theorem}[section]
\newtheorem{lemma}[theorem]{Lemma}
\newtheorem{corollary}[theorem]{Corollary}
\newtheorem{proposition}[theorem]{Proposition}
\theoremstyle{definition}
\newtheorem{remark}[theorem]{Remark}
\newtheorem{definition}[theorem]{Definition}
\newtheorem{assumption}[theorem]{Assumption}
\def\supp{{\mathop\mathrm{\,supp\,}}}
\def\diam{{\mathop\mathrm{diam\,}}}
\def\dist{{\mathop\mathrm{\,dist\,}}}
\def\loc{{\mathop\mathrm{loc}}}
\def\lfz{\lfloor}
\def\rfz{\rfloor}
\numberwithin{equation}{section}
\begin{document}

\arraycolsep=1pt

\title{\Large\bf
Atomic and Maximal Function Characterizations of Musielak-Orlicz-Hardy Spaces Associated to Non-negative Self-adjoint
Operators on Spaces of Homogeneous Type
\footnotetext{\hspace{-0.35cm} 2010 {\it Mathematics Subject
Classification}. {Primary 42B25; Secondary 42B35, 46E30, 30L99.}
\endgraf{\it Key words and phrases}. Musielak-Orlicz-Hardy space,
atom, maximal function, non-negative self-adjoint operator, Gaussian upper bound estimate, space of
homogeneous type, strongly Lipschitz domain.
\endgraf This project is supported by the
National Natural Science Foundation  of China (Grant Nos. 11871254, 11571289, 11571039,  11761131002 and 11726621)
and the Fundamental Research Funds for the Central Universities (Grant No. lzujbky-2018-111).}}
\author{Sibei Yang and Dachun Yang\,\footnote{Corresponding author/{\red August 29, 2018}/Final version.}}
\date{ }
\maketitle

\vspace{-0.8cm}

\begin{center}
\begin{minipage}{13.5cm}\small
{{\bf Abstract.} Let $\mathcal{X}$ be a metric space
with doubling measure and $L$ a non-negative self-adjoint operator
on $L^2(\mathcal{X})$ whose heat kernels
satisfy the Gaussian upper bound estimates.
Assume that the growth function $\varphi:\ \mathcal{X}\times[0,\infty)
\to[0,\infty)$ satisfies that $\varphi(x,\cdot)$ is an Orlicz function and
$\varphi(\cdot,t)\in {\mathbb A}_{\infty}(\mathcal{X})$ (the class of
uniformly Muckenhoupt weights).  Let $H_{\varphi,\,L}(\mathcal{X})$
be the Musielak-Orlicz-Hardy space defined via the Lusin area
function associated with the heat semigroup of $L$.
In this article, the authors characterize
the space $H_{\varphi,\,L}(\mathcal{X})$ by means of atoms, non-tangential and radial maximal
functions associated with $L$. In particular, when $\mu(\mathcal{X})<\fz$,
the local non-tangential and radial maximal function characterizations of $H_{\varphi,\,L}(\mathcal{X})$
are obtained. As applications, the authors obtain various maximal
function and the atomic characterizations of the ``geometric" Musielak-Orlicz-Hardy spaces $H_{\varphi,\,r}(\Omega)$
and $H_{\varphi,\,z}(\Omega)$ on the strongly Lipschitz domain $\Omega$ in $\mathbb{R}^n$
associated with second-order self-adjoint elliptic operators
with the Dirichlet and the Neumann boundary conditions; even when $\varphi(x,t):=t$ for any $x\in\mathbb{R}^n$
and $t\in[0,\infty)$, the equivalent characterizations of
$H_{\varphi,\,z}(\Omega)$ given in this article improve the known results
via removing the assumption that $\Omega$ is unbounded.}
\end{minipage}
\end{center}

\vspace{0.1cm}

\section{Introduction\label{s1}}

 Let $\cx$ be a \emph{set}, $d$ a \emph{metric} on $\cx$ and $\mu$ a \emph{non-negative Borel regular
measure} on $\cx$. For any $x\in\cx$ and $r\in(0,\fz)$, let
$B(x,r):=\{y\in\cx:\ d(x,y)<r\}$
and $V(x,r):=\mu(B(x,r))$.
Moreover, we assume that there exists a constant
$C_1\in[1,\fz)$ such that, for any $x\in\cx$ and $r\in(0,\fz)$,
\begin{equation}\label{1.1}
V(x,2r)\le C_1V(x,r)<\fz.
\end{equation}

Observe that $(\cx,\,d,\,\mu)$ is a \emph{space of homogeneous type} in
the sense of Coifman and Weiss \cite{cw71,cw77}. Recall that, in the
definition of spaces of homogeneous type in \cite[Chapter 3]{cw71}, $d$ is assumed to be a
quasi-metric. But, to simplify the presentation of this article, we restrict that $d$
is a metric in the whole article. Notice that the doubling property
\eqref{1.1} implies that the following strong homogeneity property
that, for some positive constants $C$ and $n$,
\begin{equation}\label{1.2}
V(x,\lz r)\le C\lz^n V(x,r)
\end{equation}
uniformly holds true for any $\lz\in[1,\fz)$, $x\in\cx$ and $r\in(0,\fz)$. There also
exist constants $C\in(0,\fz)$ and $n_0\in[0,n]$ such that, for any
$x,\,y\in\cx$ and $r\in(0,\fz)$,
$$V(x,r)\le C\lf[1+\frac{d(x,y)}{r}\r]^{n_0}V(y,r).$$
It is worth pointing out that, in the cases of Euclidean spaces $\rn$,
strongly Lipschitz domains in $\rn$ and Lie groups of polynomial growth,
$n_0$ can be chosen to be 0.

The main purposes of this article are to obtain the
atomic characterization and several maximal function characterizations
of Musielak-Orlicz-Hardy spaces associated with  non-negative self-adjoint
operators satisfying the Gaussian upper bound estimates on the space of homogeneous type $\cx$,
particularly for the case that $\mu(\cx)<\fz$, and give some applications.
We point out that, even when going back to the special case of Hardy spaces $H^p_L(\cx)\ (p\in(0,1])$,
the results obtained in this article improve the known results via removing
the \emph{restriction that $\mu(\cx)=\fz$} (see Remark \ref{r1.2} below).
It is worth pointing out that, by using the atomic decomposition of the tent space
on $\cx$ when $\mu(\cx)<\fz$, obtained in this article (see Lemma \ref{l2.4} below for the details),
the atomic characterization of the (Musielak-)(Orlicz-)Hardy space
associated with the non-negative self-adjoint operator $L$ on $L^2(\cx)$
satisfying Davies-Gaffney estimates (or reinforced off-diagonal estimates),
established in \cite{bckyy13b,hlmmy,jy11,yys4},
could be improved via removing the \emph{assumption that $\mu(\cx)=\fz$} (see Remark \ref{r1.1} below).

Recall that the real-variable theory of Hardy spaces on the $n$-dimensional
Euclidean space $\rn$, initiated by Stein and Weiss \cite{sw60} and then systematically developed
by Fefferman and Stein \cite{fs72}, plays important roles in various fields of analysis
and partial differential equations (see, for example, \cite{fs72,m94,sw60}).
It is well known that the Hardy space $H^p(\rn)$, with $p\in(0,1]$,
is a suitable substitute of the Lebesgue space $L^p(\rn)$; for example, the
classical Riesz transform is bounded on $H^p(\rn)$, but not on $L^p(\rn)$
when $p\in(0,1]$. Moreover, it is worth pointing out that the classical
Hardy space $H^p(\rn)$ is essentially
related to the Laplace operator on $\rn$. However, in many settings,
these classical function spaces are not applicable;
for example, the Riesz transforms $\nabla L^{-1/2}$ may not be bounded
from the Hardy space $H^1(\rn)$ to $L^1(\rn)$ when $L$ is a second-order divergence form
elliptic operator with complex bounded measurable coefficients on $\rn$ (see, for example,
\cite{hm09,hmm11}). Motivated by this, the study for the real-variable
theory of various function spaces on $\rn$ or domains in $\rn$, especially,
the Hardy-type spaces, associated with different differential operators,
has inspired great interests in recent years; see, for example, \cite{adm,ar03,dl13,dy04,dz02,hm09,hll18,jyy12,sy16,y08}
for the case of Hardy spaces, \cite{bckyy,ls13,sy10} for the case of weighted Hardy spaces, \cite{abdr17,yzz18,yz16,zy16} for the case of variable exponent Hardy spaces
and \cite{al11,jy10,jy11,yy14,yys2,yys3,yys16} for the case of (Musielak-)Orlicz Hardy spaces. Recall that the Musielak-Orlicz space was originated by Nakano \cite{n50} and
developed by Musielak and Orlicz \cite{m83,mo59}, which is a natural generalization of
many important spaces such as (weighted) Lebesgue spaces, variable Lebesgue spaces
and Orlicz spaces and not only has its own interest, but is also very useful in partial differential equations
\cite{ap16,bcg15na,hhk16,GSZ}, in calculus of variations \cite{cm15arma}, in
image restoration \cite{hhlt13,ana14} and in fluid dynamics \cite{sg14,mw}.
The Musielak-Orlicz Hardy space on $\rn$ has proved useful in harmonic analysis
(see, for example, \cite{k14,CCYY16tams,LY15,ylk17})
and, especially, naturally appears in the endpoint estimate for both the div-curl lemma and the commutator of
Caldr\'{o}n-Zygmund operators (see \cite{bgk12,bijz07,ylk17}).
Moreover, the real-variable theory of Hardy-type spaces associated with
operators on the space of homogeneous type was also widely studied (see, for example,
\cite{bckyy13b,bd18,bdl1,bdl,dkkp17,hhlt18,hlmmy,sy18,yys4,jy11,yz18}).

Let $\cx$ be a space of homogeneous type and $L$ a non-negative self-adjoint operator
on $L^2(\cx)$ whose heat semigroup satisfies the so-called Davies-Gaffney estimates
(see, for example, \cite{hlmmy}). \emph{Under the assumption that $\mu(\cx)=\fz$},
the equivalent characterizations of the Hardy space
$H^1_L(\cx)$ associated with $L$, including the atom, the molecule and the
Lusin area function associated with $L$, were established in \cite{hlmmy}, which were extended
to the (Musielak-)Orlicz-Hardy space in \cite{jy11,yys4}. As a special case of those operators,
when $\cx:=\rn$ and
$L:=-\Delta+V$ is the Schr\"odinger operator with $0\le V\in L^1_{\loc}(\rn)$,
the non-tangential maximal function and the radial maximal function characterizations, associated with $L$,
of the Hardy space $H^1_L(\rn)$, the Orlicz-Hardy space
$H_{\Phi,\,L}(\rn)$ and the Musielak-Orlicz-Hardy space $H_{\fai,\,L}(\rn)$
were obtained, respectively, in \cite{hlmmy},
\cite{jy11} and \cite{bckyy13b,yys4}. Recall that, when $L:=-\Delta+V$ with $0\le V\in L^1_{\loc}(\rn)$,
its heat semigroup satisfies the  Gaussian upper bound estimates (see, for example, \cite[(8.4)]{hlmmy}).

Assume further that $L$ is a non-negative self-adjoint operator
on $L^2(\cx)$ whose heat kernels satisfy the Gaussian upper bound estimates.
When $\cx:=\rn$, via borrowing some ideas from \cite{c77},
the non-tangential maximal function characterization associated with $L$ of
the Hardy space $H^p_L(\rn)$, with $p\in(0,1]$, was established in \cite{sy16},
which was extended to the Musielak-Orlicz-Hardy space $H_{\fai,\,L}(\rn)$ in \cite{yys16}.
Moreover, under some additional assumptions for $L$, the radial maximal
function characterization of $H_{\fai,\,L}(\rn)$ was also obtained in \cite{yys16}.
Recently,
$$\emph{under the assumption that $\cx$ is a space of homogeneous type with
$\mu(\cx)=\fz$},$$
the non-tangential maximal function and the radial maximal function characterizations,
associated with $L$, of the Hardy space $H^p_L(\cx)$ ($p\in(0,1]$) were established in \cite{sy18},
which improves the results obtained in \cite{sy16} even when $\cx:=\rn$.
Moreover, the equivalent characterizations of the Hardy space
$H^p_L(\cx)$ ($p\in(0,1]$) associated with $L$, including the atom, the non-tangential maximal function
and the radial maximal function associated with $L$, were established in \cite{bdl1}
via a different method from that in \cite{sy18}, which improve the results in \cite{sy18}
by removing the assumption that $\mu(\cx)=\fz$. Recently, the atomic and the maximal function
characterizations of the weighted Hardy space $H^p_{L,\,\omega}(\cx)$, with any $p\in(0,1]$,
associated with $L$, were obtained in \cite{bd18} via applying the method used in
\cite{bdl1}, without assuming $\mu(\cx)=\fz$, where $\omega\in A_\fz(\cx)$.
Here and hereafter, $A_q(\cx)$ with $q\in[1,\fz]$ denotes the
\emph{class of Muckenhoupt weights} (see, for example, \cite{gra1}).

Let $\cx$ be a space of homogeneous type and $L$ a non-negative self-adjoint operator
on $L^2(\cx)$ whose heat kernels satisfy the Gaussian upper bound estimates. Motivated
by \cite{bdl1,sy16,sy18,yys16}, in this article, we establish
the equivalent characterizations of the Musielak-Orlicz-Hardy space $H_{\varphi,\,L}(\mathcal{X})$,
in terms of the atom, the non-tangential maximal function and the radial
maximal function associated with $L$,
which improve the corresponding results for the space $H_{\varphi,\,L}(\mathcal{X})$ established in
\cite{bckyy13b,jy11,yys4} via removing the assumption that $\mu(\cx)=\fz$.
In particular, when $\mu(\mathcal{X})<\fz$,
the local non-tangential and radial maximal function characterizations of $H_{\varphi,\,L}(\mathcal{X})$
are obtained, which imply that the global Musielak-Orlicz-Hardy space
$H_{\varphi,\,L}(\mathcal{X})$ and the local Musielak-Orlicz-Hardy space
$h_{\fai,\,L}(\cx)$ are equivalent when $\mu(\cx)<\fz$.
We point out that even in the special case of Hardy spaces $H^p_L(\cx)\ (p\in(0,1])$,
the results obtained in this article improve the known results via removing
the \emph{assumption that $\mu(\cx)=\fz$} (see Remark \ref{r1.2} below).
As applications, the ``geometric" Musielak-Orlicz-Hardy spaces $H_{\varphi,\,r}(\Omega)$
and $H_{\varphi,\,z}(\Omega)$ on the strongly Lipschitz domain $\Omega$ in $\mathbb{R}^n$ are characterized
via several maximal functions and atoms associated with second-order self-adjoint elliptic operators
with the Dirichlet and the Neumann boundary conditions. Some results obtained in this article are new
even when $\mu(\cx)<\fz$ and $\varphi(x,t):=t^p$, with $p\in(0,1]$,
for any $x\in\mathcal{X}$ and $t\in[0,\infty)$
(see Remark \ref{r1.2}(i) below for the details).
Moreover, it is worth pointing out that the results obtained for the space $H_{\fai,\,z}(\boz)$
improve the corresponding results established in \cite{ar03,yys2,yys3} by removing the additional assumption
that $\boz$ is unbounded.

To state our main results, in the remainder of the whole article,
we \emph{always assume} that $L$ is a densely defined linear operator on $L^2(\cx)$
satisfying the following two assumptions:

\begin{assumption}\label{a1}
$L$ is non-negative and self-adjoint.
\end{assumption}

\begin{assumption}\label{a2}
The kernels of the semigroup $\{e^{-tL}\}_{t>0}$, denoted by $\{K_t\}_{t>0}$,
  are measurable functions on $\cx\times\cx$ and satisfy the Gaussian upper bound estimates, namely,
  there exist positive constants $C_2$ and $c_2$ such that, for any $t\in(0,\fz)$ and $x,\,y\in\cx$,
  \begin{equation}\label{1.3}
  \lf|K_t(x,y)\r|\le\frac{C_2}{V(x,\sqrt{t})}\exp\lf\{-\frac{[d(x,y)]^2}{c_2t}\r\}.
  \end{equation}
\end{assumption}

Examples of operators satisfying Assumptions \ref{a1} and \ref{a2} include both second-order self-adjoint
elliptic operators in divergence form with bounded measurable coefficients and
(degenerate) Schr\"odinger operators with non-negative potentials or with magnetic fields
on $\rn$ or strongly Lipschitz domains, and Laplace-Beltrami operators on Heisenberg groups, connected and simply connected
nilpotent Lie groups or complete Riemannian manifolds
(see, for example, \cite{ar03,bdl1,bdl,d89,s95,sy18}).

Now we describe the Musielak-Orlicz function considered in this article as follows.
Recall that a function $\Phi:\ [0,\fz)\to[0,\fz)$ is called an \emph{Orlicz function}
if it is non-decreasing, $\Phi(0)=0$, $\Phi(t)>0$ for any $t\in(0,\fz)$ and
$\lim_{t\to\fz}\Phi(t)=\fz$ (see, for example,
\cite{m83,rr91,ylk17}). We point out that, differently from the classical definition of
Orlicz functions, the \emph{Orlicz functions in this article may not be convex}.
Moreover, $\Phi$ is said to be of \emph{upper} (resp. \emph{lower})
\emph{type $p$} for some $p\in(0,\fz)$ if
there exists a positive constant $C$ such that, for any
$s\in[1,\fz)$ (resp. $s\in[0,1]$) and $t\in[0,\fz)$,
$\Phi(st)\le Cs^p \Phi(t).$

For a given function $\fai:\ \cx\times[0,\fz)\to[0,\fz)$ such that, for
any $x\in\cx$, $\fai(x,\cdot)$ is an Orlicz function,
$\fai$ is said to be of \emph{uniformly upper} (resp.
\emph{lower}) \emph{type $p$}  for some $p\in(0,\fz)$ if there
exists a positive constant $C$ such that, for any $x\in\cx$,
$t\in[0,\fz)$ and $s\in[1,\fz)$ (resp. $s\in[0,1]$),
$\fai(x,st)\le Cs^p\fai(x,t)$.
Let
\begin{equation}\label{1.4}
I(\fai):=\inf\lf\{p\in(0,\fz):\ \fai\ \text{is of uniformly upper
type}\ p\r\}
\end{equation}
and
\begin{equation}\label{1.5}
i(\fai):=\sup\lf\{p\in(0,\fz):\ \fai\ \text{is of uniformly lower
type}\ p\r\}.
\end{equation}
In what follows, $I(\fai)$ and $i(\fai)$ are
called, respectively, the \emph{uniformly critical
upper type index} and the \emph{uniformly critical lower type index} of $\fai$.
Observe that $I(\fai)$ and $i(\fai)$ may not be attainable, namely, $\fai$ may not
be of uniformly upper type $I(\fai)$ or uniformly lower type $i(\fai)$
(see, for example, \cite{bckyy13b,hyy,k14,ylk17,yys1,yys4} for some examples).
Moreover, it is easy to see that, if $\fai$ is of uniformly upper type $p_0\in(0,\fz)$
and lower type $p_1\in(0,\fz)$, then $p_0\ge p_1$ and hence $I(\fai)\ge i(\fai)$.

\begin{definition}\label{d1.1}
Assume that $\cx$  is a space of homogeneous type.
Let $\fai:\ \cx\times[0,\fz)\to[0,\fz)$ satisfy that
$\fai(\cdot,t)$ is measurable for any $t\in[0,\fz)$.
Then $\fai$ is said to satisfy the
\emph{uniformly Muckenhoupt condition for some $q\in[1,\fz)$},
denoted by $\fai\in\aa_q(\cx)$, if, when $q\in (1,\fz)$,
\begin{equation*}
\aa_q (\fai):=\sup_{t\in
(0,\fz)}\sup_{B\subset\cx}\lf\{\frac{1}{V(B)}\int_B
\fai(x,t)\,d\mu(x)\r\}\lf\{\frac{1}{V(B)}\int_B
[\fai(y,t)]^{1-q}\,d\mu(y)\r\}^{q-1}<\fz
\end{equation*}
or
\begin{equation*}
\aa_1 (\fai):=\sup_{t\in (0,\fz)}
\sup_{B\subset\cx}\lf\{\frac{1}{V(B)}\int_B \fai(x,t)\,d\mu(x)\r\}
\lf\{\esup_{y\in B}[\fai(y,t)]^{-1}\r\}<\fz,
\end{equation*}
where the first suprema are taken over all $t\in(0,\fz)$ and the
second ones over all balls $B\subset\cx$.

The function $\fai$ is said to satisfy the
\emph{uniformly reverse H\"older condition for some
$q\in(1,\fz]$}, denoted by $\fai\in \rh_q(\cx)$, if, when $q\in(1,\fz)$,
\begin{align*}
\rh_q (\fai):=\sup_{t\in(0,\fz)}\sup_{B\subset\cx}\lf\{\frac{1}
{V(B)}\int_B [\fai(x,t)]^q\,d\mu(x)\r\}^{1/q}\lf\{\frac{1}{V(B)}\int_B
\fai(x,t)\,d\mu(x)\r\}^{-1}<\fz
\end{align*}
or
\begin{equation*}
\rh_{\fz} (\fai):=\sup_{t\in(0,\fz)}\sup_{B\subset\cx}\lf\{\esup_{y\in
B}\fai(y,t)\r\}\lf\{\frac{1}{V(B)}\int_B
\fai(x,t)\,d\mu(x)\r\}^{-1} <\fz,
\end{equation*}
where the first suprema are taken over all $t\in(0,\fz)$ and the
second ones over all balls $B\subset\cx$.
\end{definition}

Recall that, in Definition \ref{d1.1},
$\aa_p(\cx)$,  with $p\in[1,\fz)$, and $\rh_q(\cx)$, with $q\in(1,\fz]$,
were introduced in \cite{yys4} (see \cite{k14} or \cite{ylk17} for the Euclidean case $\rn$).
Let $\aa_{\fz}(\cx):=\cup_{q\in[1,\fz)}\aa_{q}(\cx)$.
We now recall the notions of the \emph{critical indices} for $\fai\in\aa_{\fz}(\cx)$ as follows:
\begin{equation}\label{1.6}
q(\fai):=\inf\lf\{q\in[1,\fz):\ \fai\in\aa_{q}(\cx)\r\}
\end{equation}
and
\begin{equation}\label{1.7}
r(\fai):=\sup\lf\{q\in(1,\fz]:\ \fai\in\rh_{q}(\cx)\r\}.
\end{equation}
Recall also that, if $q(\fai)\in(1,\fz)$, then, by \cite[Lemma 2.4(iii)]{hyy},
we know that $\fai\not\in\aa_{q(\fai)}(\cx)$ and there exists
$\fai\not\in \aa_1(\cx)$ such that $q(\fai)=1$
(see, for example, \cite{jn87}).
Similarly, if $r(\fai)\in(1,\fz)$, then, by \cite[Lemma 2.3(iv)]{yy14},
we know that $\fai\not\in\rh_{r(\fai)}(\cx)$
and there exists $\fai\not\in\rh_\fz(\cx)$
such that $r(\fai)=\fz$ (see, for example, \cite{cn95}).

Now we recall the notion of growth functions from Ky \cite{k14} (see also \cite{ylk17}).

\begin{definition}\label{d1.2}
Let $\cx$ be a space of homogeneous type. A function $\fai:\ \cx\times[0,\fz)\rightarrow[0,\fz)$ is called
 a \emph{growth function} if the following hold true:
 \vspace{-0.25cm}
\begin{enumerate}
\item[(i)] $\fai$ is a \emph{Musielak-Orlicz function}, namely,
\vspace{-0.2cm}
\begin{enumerate}
    \item[(a)] $\fai(x,\cdot)$ is an
    Orlicz function for any $x\in\cx$;
    \vspace{-0.2cm}
    \item [(b)] $\fai(\cdot,t)$ is a measurable
    function for any $t\in[0,\fz)$.
\end{enumerate}
\vspace{-0.25cm} \item[(ii)] $\fai\in \aa_{\fz}(\cx)$.
\vspace{-0.25cm} \item[(iii)] The function $\fai$ is of
uniformly lower type $p$ for some $p\in(0,1]$ and of uniformly
upper type 1.
\end{enumerate}
\end{definition}

For a Musielak-Orlicz function $\fai$ as in Definition \ref{d1.2},
a measurable function $f$ on $\cx$ is said to be in the \emph{Musielak-Orlicz space}
$L^{\fai}(\cx)$ if $\int_{\cx}\fai(x,|f(x)|)\,d\mu(x)<\fz$. Moreover,
for any $f\in L^{\fai}(\cx)$, the \emph{Luxemburg} (also called the
\emph{Luxemburg-Nakano}) \emph{quasi-norm} $\|f\|_{L^\fai(\cx)}$ of $f$
is defined by setting
$$\|f\|_{L^{\fai}(\cx)}:=\inf\lf\{\lz\in(0,\fz):\
\int_{\cx}\fai\lf(x,\frac{|f(x)|}{\lz}\r)\,d\mu(x)\le1\r\}.$$

Clearly, for any $x\in\cx$ and $t\in[0,\fz)$,
\begin{equation}\label{1.8}
\fai(x,t):=\oz(x)\Phi(t)
\end{equation} is a growth function if
$\oz\in A_{\fz}(\cx)$ and $\Phi$ is an Orlicz function of lower
type $p$ for some $p\in(0,1]$ and upper type 1.
A typical example of such functions $\Phi$ is $\Phi(t):=t^p$,
with $p\in(0,1]$, for any $t\in [0,\fz)$
(see, for example, \cite{hyy,k13,k14,ylk17,yys4} for more examples of such $\Phi$).
Let $x_0\in\cx$. Another typical example $\fai$ of growth functions is given by setting
\begin{equation}\label{1.9}
\fai(x,t):=\frac{t}{\ln(e+d(x,x_0))+\ln(e+t)}
\end{equation}
for any $x\in\cx$ and $t\in[0,\fz)$ (see, for example, \cite{bgk12,k13,k14}). More precisely,
$\fai\in \aa_1(\cx)$, $\fai$ is of uniformly upper type 1 (indeed, $I(\fai)=1$,
which is attainable) and $i(\fai)=1$ which is not attainable (see \cite{k14,yys4} for the details).
It is worth pointing out that some special Musielak-Orlicz-Hardy spaces appear naturally
in the study of the products of functions in $H^1(\cx)$ and $\mathrm{BMO}(\cx)$
(see, for example, \cite{bgk12,bijz07,fyl17,fcy17,fy17,lyy}), and the endpoint estimates
for both the div-curl lemma (see, for example, \cite{bfg10,bijz07}) and the commutators
of Calder\'on-Zygmund operators (see, for example, \cite{k13,lcfy17,lcfy18}).

Now we recall the notion of the Musielak-Orlicz-Hardy space $H_{\fai,\,L}(\cx)$
introduced in \cite{bckyy13b,yys4}.

\begin{definition}\label{d1.3}
Assume that $\cx$  is a space of homogeneous type.
Let $L$ be an operator on $L^2(\cx)$ satisfying Assumptions \ref{a1} and \ref{a2},
and $\fai$ as in Definition \ref{d1.2}.
For any $f\in L^2(\cx)$ and $x\in\cx$, the \emph{Lusin area
function, $S_{L}(f)$, associated with $L$} is defined by setting
\begin{equation*}
S_{L}(f)(x):=\lf\{\int_{\bgz(x)}\lf|t^2 Le^{-t^2L}(f)(y)\r|^2\frac{d\mu(y)\,dt}{V(x,t)t}\r\}^{1/2}.
\end{equation*}

A function $f\in L^2 (\cx)$ is said to be in the set $\mathbb{H}_{\fai,\,L}(\cx)$ if $S_L(f)\in
L^{\fai}(\cx)$; moreover, define
$\|f\|_{H_{\fai,\,L}(\cx)}:=\|S_{L}(f)\|_{L^{\fai}(\cx)}$. Then the \emph{Musielak-Orlicz-Hardy space}
$H_{\fai,\,L}(\cx)$ is defined to be the completion of $\mathbb{H}_{\fai,\,L}(\cx)$ with respect to the
quasi-norm $\|\cdot\|_{H_{\fai,\,L}(\cx)}$.
\end{definition}

Moreover, we introduce the notions of both $(\fai,\,q,\,M)_L$-atoms and
atomic Musielak-Orlicz-Hardy spaces $H^{M,\,q}_{\fai,\,L,\,\mathrm{at}}(\cx)$ as follows.
In what follows, for any subset $E$ of $\cx$, we use $\mathbf{1}_E$ to denote its
\emph{characteristic function}.

\begin{definition}\label{d1.4}
Let $\cx$, $L$ and $\fai$ be as in Definition \ref{d1.3}.
Assume that $q\in(1,\fz]$, $M\in\nn$ and $B\subset\cx$ is a ball.
\begin{itemize}
  \item[{\rm(I)}] Let $\cd(L^M)$ be the domain of $L^M$.
A function $\az\in L^q(\cx)$ is called a $(\fai,\,q,\,M)_L$-\emph{atom}
associated with the ball $B$ if there exists a function $b\in\cd(L^M)$
such that
\begin{itemize}
  \item[(i)] $\az=L^M b$;
  \item[(ii)] for any $j\in\{0,\,1,\,\ldots,\,M\}$, $\supp(L^j b)\subset B$;
  \item[(iii)] for any $j\in\{0,\,1,\,\ldots,\,M\}$,
  $\|(r^2_BL)^jb\|_{L^q(\cx)}\le r^{2M}_B[V(B)]^{1/q}\|\mathbf{1}_B\|^{-1}_{L^\fai(\cx)}$,
  where $r_B$ denotes the radius of $B$.
\end{itemize}

In particular, if $\mu(\cx)<\fz$, a function $\az$ on $\cx$
is called a \emph{$(\fai,\,q)$-single-atom} if
$$\|\az\|_{L^q(\cx)}\le[\mu(\cx)]^{1/q}\|\mathbf{1}_\cx\|^{-1}_{L^\fai(\cx)}.$$
  \item[{\rm(II)}] When $\mu(\cx)=\fz$, for any $f\in L^2(\cx)$,
\begin{align}\label{1.10}
f=\sum_{j=1}^\fz\lz_j\az_j
\end{align}
is called an \emph{atomic} $(\fai,\,q,\,M)_L$-\emph{representation}
of the function $f$ if, for any $j\in\nn$, $\az_j$ is a
$(\fai,\,q,\,M)_L$-atom associated with the ball $B_j\subset\cx$,
the summation \eqref{1.10} converges in $L^2(\cx)$ and $\{\lz_j\}_j\subset\cc$ satisfies
that $\sum_{j=1}^\fz\fai(B_j,|\lz_j|\|\mathbf{1}_{B_j}\|^{-1}_{L^\fai(\cx)})<\fz$.

When $\mu(\cx)<\fz$, for any $f\in L^2(\cx)$,
\begin{align}\label{1.11}
f=\lz_0\az_0+\sum_{j=1}^\fz\lz_j\az_j
\end{align}
is called an \emph{atomic} $(\fai,\,q,\,M)_L$-\emph{representation}
of $f$ if $\az_0$ is a $(\fai,\,q)$-single-atom, $\az_j$ for any $j\in\nn$ is a
$(\fai,\,q,\,M)_L$-atom associated with the ball $B_j\subset\cx$,
the summation \eqref{1.11} converges in $L^2(\cx)$ and $\{\lz_0\}\cup\{\lz_j\}_{j=1}^\fz\subset\cc$ satisfies
that
$$\fai\lf(\cx,|\lz_0|\|\mathbf{1}_{\cx}\|^{-1}_{L^\fai(\cx)}\r)+\sum_{j=1}^\fz
\fai\lf(B_j,|\lz_j|\|\mathbf{1}_{B_j}\|^{-1}_{L^\fai(\cx)}\r)<\fz.$$

Let
$$\mathbb{H}^{M,\,q}_{\fai,\,L,\,\mathrm{at}}(\cx):=\lf\{f\in L^2(\cx):\ f\ \text{has an atomic}\
(\fai,\,q,\,M)_L\text{-representation}\r\}
$$
equipped with the \emph{quasi-norm}
\begin{align*}
\|f\|_{H^{M,\,q}_{\fai,\,L,\,\mathrm{at}}(\cx)}
:=\inf\lf\{
\blz\lf(\lf\{\lz_j\az_j\r\}_{j=0}^\fz\r):\ \sum_{j=0}^\fz\lz_j\az_j\
\text{is a}\ (\fai,\,q,\,M)_L\text{-representation of}\ f\r\},
\end{align*}
where
\begin{align*}
\blz\lf(\lf\{\lz_j\az_j\r\}_{j=0}^\fz\r)
:=\inf\lf\{\lz\in(0,\fz):\ \fai\lf(\cx,\frac{|\lz_0|}
{\lz\|\mathbf{1}_{\cx}\|_{L^\fai(\cx)}}\r)+\sum_{j=1}^\fz\fai\lf(B_j,\frac{|\lz_j|}
{\lz\|\mathbf{1}_{B_j}\|_{L^\fai(\cx)}}\r)\le1\r\}
\end{align*}
(when $\mu(\cx)=\fz$, $\lz_0:=0$) and the infimum is taken over all the atomic
$(\fai,\,q,\,M)_L$-representations of $f$ as above.
The \emph{atomic Musielak-Orlicz-Hardy space} $H^{M,\,q}_{\fai,\,L,\,\mathrm{at}}(\cx)$
is then defined as the completion of the set $\mathbb{H}^{M,\,q}_{\fai,\,L,\,\mathrm{at}}(\cx)$
with respect to the quasi-norm $\|\cdot\|_{H^{M,\,q}_{\fai,\,L,\,\mathrm{at}}(\cx)}$.
\end{itemize}
\end{definition}

We point out that, if $\mu(\cx)=\fz$, the atomic Musielak-Orlicz-Hardy
space $H^{M,\,q}_{\fai,\,L,\,\mathrm{at}}(\cx)$ in Definition \ref{d1.4} coincides
with the atomic Musielak-Orlicz-Hardy space introduced in \cite[Definition 5.3]{bckyy13b}.

Now we introduce the notions of Musielak-Orlicz-Hardy spaces via maximal
functions associated with the operator $L$.

\begin{definition}\label{d1.5}
Let $\cx$, $L$ and $\fai$ be as in Definition \ref{d1.3}.
Assume that $\phi\in\cs(\rr)$ is an even function with $\phi(0)=1$ and
$\az\in(0,\fz)$.
\begin{itemize}
  \item[{\rm(i)}] For any $f\in L^2(\cx)$ and $x\in\cx$, the \emph{non-tangential maximal function}
$\phi^{\ast}_{L,\,\az}(f)$ is defined by setting
\begin{align*}
\phi^{\ast}_{L,\,\az}(f)(x):=\sup_{d(x,y)<\az t,\,t\in(0,\fz)}\lf|\phi(t\sqrt{L})(f)(y)\r|.
\end{align*}
A function $f\in L^2 (\cx)$ is said to be in the set $\mathbb{H}^{\phi,\,\az}_{\fai,\,L,\,\mathrm{max}}(\cx)$ if $\phi^{\ast}_{L,\,\az}(f)\in L^{\fai}(\cx)$; moreover, define
$\|f\|_{H^{\phi,\,\az}_{\fai,\,L,\,\mathrm{max}}(\cx)}:=\|\phi^{\ast}_{L,\,\az}(f)\|_{L^{\fai}(\cx)}$.
Then the \emph{Musielak-Orlicz-Hardy space} $H^{\phi,\,\az}_{\fai,\,L,\,\mathrm{max}}(\cx)$ is defined to
be the completion of $\mathbb{H}^{\phi,\,\az}_{\fai,\,L,\,\mathrm{max}}(\cx)$ with respect to the
quasi-norm $\|\cdot\|_{H^{\phi,\,\az}_{\fai,\,L,\,\mathrm{max}}(\cx)}$.

Specially, if $\phi(x):=e^{-x^2}$ for any $x\in\rr$ and $\az:=1$, denote $\phi^{\ast}_{L,\,\az}(f)$
simply by $f^\ast_L$ and, in this case, denote the space
$H^{\phi,\,\az}_{\fai,\,L,\,\mathrm{max}}(\cx)$ simply by $H_{\fai,\,L,\,\mathrm{max}}(\cx)$.
  \item[{\rm(ii)}] For any $f\in L^2(\cx)$ and $x\in\cx$, the \emph{radial maximal function}
$\phi^+_{L}(f)$ is defined by setting
\begin{align*}
\phi^{+}_{L}(f)(x):=\sup_{t\in(0,\fz)}\lf|\phi(t\sqrt{L})(f)(x)\r|.
\end{align*}
A function $f\in L^2 (\cx)$ is said to be in the set $\mathbb{H}^{\phi}_{\fai,\,L,\,\mathrm{rad}}(\cx)$
if $\phi^+_{L}(f)\in L^{\fai}(\cx)$; moreover, define
$\|f\|_{H^{\phi}_{\fai,\,L,\,\mathrm{rad}}(\cx)}:=\|\phi^+_{L}(f)\|_{L^{\fai}(\cx)}$.
Then the \emph{Musielak-Orlicz-Hardy space} $H^{\phi}_{\fai,\,L,\,\mathrm{rad}}(\cx)$
is defined via replacing $\phi^{\ast}_{L,\,\az}(f)$ by $\phi^+_{L}(f)$ in the definition of
the space $H^{\phi,\,\az}_{\fai,\,L,\,\mathrm{max}}(\cx)$.

If $\phi(x):=e^{-x^2}$ for any $x\in\rr$, denote $\phi^+_{L}(f)$
simply by $f^+_L$ and, in this case, denote the space
$H^{\phi}_{\fai,\,L,\,\mathrm{rad}}(\cx)$ simply by $H_{\fai,\,L,\,\mathrm{rad}}(\cx)$.
\end{itemize}
\end{definition}

Then the first main result of this article reads as follows.

\begin{theorem}\label{t1.1}
Assume that $\cx$ is a space of homogeneous type.
Let $L$ be an operator on $L^2(\cx)$ satisfying Assumptions \ref{a1} and \ref{a2},
and $\fai$ as in Definition \ref{d1.2}. Assume that $r(\fai)$, $I(\fai)$, $q(\fai)$
and $i(\fai)$ are, respectively, as in \eqref{1.7}, \eqref{1.4}, \eqref{1.6} and \eqref{1.5},
and $[r(\fai)]'$ denotes the conjugate exponent of $r(\fai)$.
For any $q\in([r(\fai)]'I(\fai),\fz]\cap(1,\fz]$,
$M\in\nn\cap(\frac{nq(\fai)}{2i(\fai)},\fz)$ and $\az\in(0,\fz)$,
the Musielak-Orlicz-Hardy spaces $H^{M,\,q}_{\fai,\,L,\,\mathrm{at}}(\cx)$,
$H^{\phi,\,\az}_{\fai,\,L,\,\mathrm{max}}(\cx)$, $H^{\phi}_{\fai,\,L,\,\mathrm{rad}}(\cx)$
and $H_{\fai,\,L}(\cx)$ coincide with equivalent quasi-norms.
\end{theorem}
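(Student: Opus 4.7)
The plan is to establish the four-way equivalence by the cycle of continuous embeddings
$$H_{\fai,\,L}(\cx)\hookrightarrow H^{M,\,q}_{\fai,\,L,\,\mathrm{at}}(\cx)\hookrightarrow H^{\phi,\,\az}_{\fai,\,L,\,\mathrm{max}}(\cx)\hookrightarrow H^{\phi}_{\fai,\,L,\,\mathrm{rad}}(\cx)\hookrightarrow H_{\fai,\,L}(\cx),$$
all performed at the $L^2$-level on the dense sets $\mathbb{H}_{\bullet}(\cx)\cap L^2(\cx)$ and then transferred by density to the completions. The third arrow is automatic because $\phi^{+}_{L}(f)\le\phi^{\ast}_{L,\,\az}(f)$ pointwise; the remaining three arrows carry the content of the theorem.

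For the first embedding I would use the $L$-Calder\'on reproducing formula
$$f=c_M\int_0^\fz(t^2L)^{M+1}e^{-2t^2L}(f)\,\frac{dt}{t}$$
on the closure of $\mathrm{Range}(L)$. By definition of $S_L$, the function $F(y,t):=t^2Le^{-t^2L}f(y)$ sits in the tent-type space naturally associated with $\fai$, so the tent-space atomic decomposition of Lemma \ref{l2.4}—which crucially produces an extra global piece $\lz_0 A_0$ when $\mu(\cx)<\fz$—writes $F=\sum_j\lz_j A_j$. Plugging this back into the reproducing formula yields a molecular expansion of $f$, and the standard spectral calculation (iterating the Gaussian bound \eqref{1.3} $M$ times) converts each molecule into a $(\fai,q,M)_L$-atom or, in the $\mu(\cx)<\fz$ case, a $(\fai,q)$-single atom. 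The restrictions $M>nq(\fai)/(2i(\fai))$ and $q\in([r(\fai)]'I(\fai),\fz]$ are precisely the thresholds at which the off-diagonal volume decay becomes summable in the Musielak-Orlicz norm.

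For atomic $\hookrightarrow$ non-tangential maximal I would establish the uniform atomic estimate
$$\int_\cx\fai\bigl(x,\phi^{\ast}_{L,\,\az}(\az)(x)\bigr)\,d\mu(x)\ls\fai\bigl(B,\|\mathbf{1}_B\|^{-1}_{L^\fai(\cx)}\bigr)$$
for every $(\fai,q,M)_L$-atom $\az=L^Mb$ associated with a ball $B$, with an analogous single-atom bound. On $2B$ the $L^q$-boundedness of $\phi^{\ast}_{L,\,\az}$—standard from \eqref{1.3}, doubling, and a vector-valued Fefferman-Stein inequality—combined with Musielak-Orlicz H\"older (legitimated by $q>[r(\fai)]'I(\fai)$) closes the estimate. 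On each annulus $U_j(B):=2^{j+1}B\setminus 2^jB$ I would push $L^M$ through the kernel using self-adjointness, $\phi(t\sqrt{L})L^Mb=L^M\phi(t\sqrt{L})b$, and apply $M$ copies of \eqref{1.3} with Definition \ref{d1.4}(I)(iii) to extract a $(r_B/d(x,B))^{2M}$ decay factor that sums in $\fai$ thanks to $M>nq(\fai)/(2i(\fai))$. For the closing inclusion $H^{\phi}_{\fai,\,L,\,\mathrm{rad}}(\cx)\hookrightarrow H_{\fai,\,L}(\cx)$ I would use the identity $t^2Le^{-t^2L}=-\tfrac{t}{2}\tfrac{d}{dt}e^{-t^2L}$ and a subordination formula relating $\phi(t\sqrt{L})$ to $e^{-t^2L}$ to reduce matters to a good-$\lz$ inequality for the pair $(S_L,\phi^+_L)$, then upgrade it to $L^\fai$ via $\fai\in\aa_{\fz}(\cx)\cap\rh_{r(\fai)}(\cx)$ together with uniformly lower type $i(\fai)$.

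The principal obstacle throughout is the case $\mu(\cx)<\fz$. There the reproducing formula recovers $f$ only modulo $\ker L$, the tent-space decomposition acquires the single-atom branch $\lz_0 A_0$, and every subsequent step must carry that branch: the single-atom analogue of the uniform atomic estimate needs a direct global $L^q$-argument, and the good-$\lz$ inequality in the area-versus-radial comparison has to absorb the low-frequency contribution that the non-compact setting simply lacks. It is precisely this single-atom bookkeeping—enabled by the tent decomposition in Lemma \ref{l2.4}—that removes the $\mu(\cx)=\fz$ hypothesis imposed in \cite{bckyy13b,hlmmy,jy11,yys4} and constitutes the technical heart of the proof.
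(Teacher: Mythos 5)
Your cycle of embeddings is logically sound, and your identification of the single-atom branch in Lemma \ref{l2.4} as the key new ingredient for $\mu(\cx)<\fz$ matches the paper. The first three arrows are essentially workable (the paper proves the atomic estimate \eqref{2.17} for the radial maximal function and separately establishes $H^{\phi,\,\az}_{\fai,\,L,\,\mathrm{max}}(\cx)=H^{\phi}_{\fai,\,L,\,\mathrm{rad}}(\cx)$ in Theorem \ref{t2.1} via the Peetre-type maximal function $\cm^{\ast\ast}_{L,\,\phi,\,N}$; your direct non-tangential atomic estimate is an equivalent reorganization). The genuine gap is the closing arrow $H^{\phi}_{\fai,\,L,\,\mathrm{rad}}(\cx)\hookrightarrow H_{\fai,\,L}(\cx)$, which you propose to obtain from a good-$\lambda$ inequality for the pair $(S_L,\phi^+_L)$. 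Under the standing hypotheses --- only the Gaussian \emph{upper} bound \eqref{1.3}, with no H\"older continuity or conservation of the heat kernel --- there is no local regularity theory for $e^{-t^2L}f$ that would support a Fefferman--Stein type good-$\lambda$ argument, and you supply no construction for it. The paper closes the cycle the other way: it proves $H^{\phi,\,\az}_{\fai,\,L,\,\mathrm{max}}(\cx)\subset H^{M,\,\fz}_{\fai,\,L,\,\mathrm{at}}(\cx)$ directly, by a Calder\'on--Zygmund/Whitney decomposition of the level sets of a combined maximal function $\cm_L(f)$ together with the finite-propagation-speed calculus of Lemma \ref{l2.1} (this is \eqref{2.21}--\eqref{2.40}, including the extraction of the two single-atom pieces $f_2$ and $f_{1,\,0}$ when $\mu(\cx)<\fz$), and then returns to $H_{\fai,\,L}(\cx)$ through the tent-space equivalence of Proposition \ref{p2.2}. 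That construction is the technical heart of the proof and is absent from your sketch.

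A secondary but real defect sits in your first arrow: writing the reproducing formula with $e^{-2t^2L}$ and asserting that ``iterating the Gaussian bound $M$ times converts each molecule into a $(\fai,\,q,\,M)_L$-atom'' cannot work, because Gaussian decay never produces the compact support required by Definition \ref{d1.4}(I)(ii). You must instead use the operator $\pi_{\Phi,\,L,\,M}$ built from an even $\phi\in C^\fz_c(\rr)$ with $\supp(\phi)\subset(-1,1)$, so that Lemma \ref{l2.1} forces $\supp(K_{(t^2L)^k\Phi(t\sqrt{L})})\subset\{d(x,y)\le t\}$ and tent-space atoms supported in $\widehat{B}$ are mapped to genuine atoms supported in a dilate of $B$; this is exactly how Proposition \ref{p2.2} proceeds. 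Alternatively you would need a separate ``molecules lie in the atomic space'' lemma, which is itself a nontrivial extra step you have not provided.
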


The strategy of the proof of Theorem \ref{t1.1} is as follows.
When $\mu(\cx)=\fz$, the equivalence of the spaces $H_{\fai,\,L}(\cx)$ and
$H^{M,\,q}_{\fai,\,L,\,\mathrm{at}}(\cx)$ was established in \cite[Theorem 5.4]{bckyy13b}.
When $\mu(\cx)<\fz$, for any $k\in\zz$, let $O_k:=\{x\in\cx:\ \ca(f)(x)>2^k\}$,
where $\ca(f)$ is as in \eqref{2.9} below.
If $O_{k}=\cx$, then $\widehat{O}_k=\cx\times(0,\fz)$,
where the tent $\widehat{O}_k$ of $O_k$
is defined by setting
$$\widehat{O}_k:=\lf\{(x,t)\in\cx\times(0,\fz):\ d(x,O_k^\complement)\ge t\r\},$$
$d(x,O_k^\complement):=\inf\{d(x,y):\ y\in O_k^\complement\}$
and $O_k^\complement:=\cx\backslash O_k$. Using this fact and repeating the proof
of \cite[Theorem 5.4]{bckyy13b}, we obtain the equivalence of $H_{\fai,\,L}(\cx)$ and
$H^{M,\,q}_{\fai,\,L,\,\mathrm{at}}(\cx)$ in the case that $\mu(\cx)<\fz$.
Moreover, via borrowing some ideas from the proof of \cite[Theorem 3.1]{sy18} and subtly
applying the properties of Musielak-Orlicz functions, we prove the equivalence of
$H^{\phi,\,\az}_{\fai,\,L,\,\mathrm{max}}(\cx)$ and $H^{\phi}_{\fai,\,L,\,\mathrm{rad}}(\cx)$.
Furthermore, by borrowing some ideas from the proof of \cite[Theorem 1.4]{bdl1},
we obtain the inclusion relation $H^{\phi,\,\az}_{\fai,\,L,\,\mathrm{max}}(\cx)\subset
H^{M,\,q}_{\fai,\,L,\,\mathrm{at}}(\cx)$. The proof of
the inclusion $H^{M,\,q}_{\fai,\,L,\,\mathrm{at}}(\cx)\subset
H^{\phi}_{\fai,\,L,\,\mathrm{rad}}(\cx)$ is standard. It is worth pointing out that
new ingredients appeared in the proof of Theorem \ref{t1.1}
are both the introduction of \emph{$(T_\fai,\,p)$-single-atoms} for tent spaces and the observation on
the existence of $(T_\fai,\,p)$-single-atoms in the decomposition of functions in
the tent space $T_\fai(\cx\times(0,\fz))$ when $\mu(\cx)<\fz$ (see Lemma \ref{l2.4} below
for the details).

\begin{remark}\label{r1.1}
Based on the atomic decomposition of the tent space on $\cx$ given in
Lemma \ref{l2.4} below, we know that the atomic decomposition theorem
of the tent space on $\cx$  in \cite{bckyy13b,hlmmy,jy11,yys4} holds true only when $\mu(\cx)=\fz$,
which further implies that the atomic characterization of the
(Musielak-)(Orlicz-)Hardy space, associated with the non-negative self-adjoint operator $L$ on $L^2(\cx)$
satisfying Davies-Gaffney estimates (or reinforced off-diagonal estimates),
obtained in \cite{bckyy13b,hlmmy,jy11,yys4}, holds true only under the assumption that $\mu(\cx)=\fz$.
Moreover, via replacing the atomic decomposition
of the tent space on $\cx$ used in \cite{bckyy13b,hlmmy,jy11,yys4} by Lemma \ref{l2.4} below
and then repeating the proof of the atomic characterization of the
(Musielak-)(Orlicz-)Hardy space in \cite{bckyy13b,hlmmy,jy11,yys4}, we obtain the
atomic characterization of the (Musielak-)(Orlicz-)Hardy space, associated with the operator $L$
satisfying Assumptions \ref{a1} and \ref{a2}, in Theorem \ref{t1.1}
without the \emph{assumption that $\mu(\cx)=\fz$}.
\end{remark}

When $\mu(\cx)<\fz$, by \eqref{1.2}, we conclude that $\diam(\cx)<\fz$,
here and hereafter, $\diam(\cx):=\sup\{d(x,y):\ x,\,y\in\cx\}$. Indeed, when $\mu(\cx)<\fz$,
$\cx$ is a bounded ball (see, for example, \cite[Lemma 5.1]{ny97}).

\begin{definition}\label{d1.6}
Let $\cx$, $L$ and $\fai$ be as in Definition \ref{d1.3}.
Assume that $\mu(\cx)<\fz$ and $\phi\in\cs(\rr)$ is an even function with $\phi(0)=1$ and
$\az\in(0,\fz)$.
\begin{itemize}
  \item[{\rm(i)}] For any $f\in L^2(\cx)$ and $x\in\cx$, the \emph{local non-tangential maximal function}
$\phi^{\ast}_{L,\,\az,\,\loc}(f)$ is defined by setting
\begin{align*}
\phi^{\ast}_{L,\,\az,\,\loc}(f)(x):=\sup_{d(x,y)<\az t,\,t\in(0,\diam(\cx))}\lf|\phi(t\sqrt{L})(f)(y)\r|.
\end{align*}

Then the \emph{local Musielak-Orlicz-Hardy space} $h^{\phi,\,\az}_{\fai,\,L,\,\mathrm{max}}(\cx)$ is defined
via replacing $\phi^{\ast}_{L,\,\az}(f)$ by $\phi^{\ast}_{L,\,\az,\,\loc}(f)$ in Definition \ref{d1.5}.

Specially, if $\phi(x):=e^{-x^2}$ for any $x\in\rr$ and $\az:=1$, denote $\phi^{\ast}_{L,\,\az,\,\loc}(f)$
simply by $f^\ast_{L,\,\loc}$ and, in this case, denote the space
$h^{\phi,\,\az}_{\fai,\,L,\,\mathrm{max}}(\cx)$ simply by $h_{\fai,\,L,\,\mathrm{max}}(\cx)$.
  \item[{\rm(ii)}] For any $f\in L^2(\cx)$ and $x\in\cx$, the \emph{local radial maximal function}
$\phi^+_{L,\,\loc}(f)$ is defined by setting
\begin{align*}
\phi^{+}_{L,\,\loc}(f)(x):=\sup_{t\in(0,\diam(\cx))}\lf|\phi(t\sqrt{L})(f)(x)\r|.
\end{align*}
Then the \emph{local Musielak-Orlicz-Hardy space} $h^{\phi}_{\fai,\,L,\,\mathrm{rad}}(\cx)$
is defined via replacing $\phi^{\ast}_{L,\,\az,\,\loc}(f)$ by $\phi^+_{L,\,\loc}(f)$ in the definition of
the space $h^{\phi,\,\az}_{\fai,\,L,\,\mathrm{max}}(\cx)$.
If $\phi(x):=e^{-x^2}$ for any $x\in\rr$, denote $\phi^+_{L,\,\loc}(f)$
simply by $f^+_{L,\,\loc}$ and, in this case, denote the space
$h^{\phi}_{\fai,\,L,\,\mathrm{rad}}(\cx)$ simply by $h_{\fai,\,L,\,\mathrm{rad}}(\cx)$.
\end{itemize}
\end{definition}

In the case that $\mu(\cx)<\fz$, we have the following further conclusion.

\begin{theorem}\label{t1.2}
Assume that $\cx$ is a space of homogeneous type with $\mu(\cx)<\fz$.
Let $L$ be an operator on $L^2(\cx)$ satisfying Assumptions \ref{a1} and \ref{a2},
and $\fai$ as in Definition \ref{d1.2}. Assume that $r(\fai)$, $I(\fai)$, $q(\fai)$
and $i(\fai)$ are, respectively, as in \eqref{1.7}, \eqref{1.4}, \eqref{1.6} and \eqref{1.5}.
For any $q\in([r(\fai)]'I(\fai),\fz]\cap(1,\fz]$,
$M\in\nn\cap(\frac{nq(\fai)}{2i(\fai)},\fz)$ and $\az\in(0,\fz)$,
the Musielak-Orlicz-Hardy spaces
$$H^{M,\,q}_{\fai,\,L,\,\mathrm{at}}(\cx),\ H^{\phi,\,\az}_{\fai,\,L,\,\mathrm{max}}(\cx), \ H^{\phi}_{\fai,\,L,\,\mathrm{rad}}(\cx)\ \mathrm{and}\ H_{\fai,\,L}(\cx)$$
and the local Musielak-Orlicz-Hardy spaces
$h^{\phi,\,\az}_{\fai,\,L,\,\mathrm{max}}(\cx)$ and
$h^{\phi}_{\fai,\,L,\,\mathrm{rad}}(\cx)$ coincide with equivalent quasi-norms.
\end{theorem}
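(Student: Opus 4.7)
Since Theorem \ref{t1.1} already identifies the four global spaces $H^{M,\,q}_{\fai,\,L,\,\mathrm{at}}(\cx)$, $H^{\phi,\,\az}_{\fai,\,L,\,\mathrm{max}}(\cx)$, $H^{\phi}_{\fai,\,L,\,\mathrm{rad}}(\cx)$ and $H_{\fai,\,L}(\cx)$, the plan is to reduce Theorem \ref{t1.2} to the statement that, when $\mu(\cx)<\fz$, the local spaces $h^{\phi,\,\az}_{\fai,\,L,\,\mathrm{max}}(\cx)$ and $h^{\phi}_{\fai,\,L,\,\mathrm{rad}}(\cx)$ coincide with their global counterparts $H^{\phi,\,\az}_{\fai,\,L,\,\mathrm{max}}(\cx)$ and $H^{\phi}_{\fai,\,L,\,\mathrm{rad}}(\cx)$, respectively. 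One half of each equivalence is immediate from the pointwise inequalities $\phi^{\ast}_{L,\,\az,\,\loc}(f)\le\phi^{\ast}_{L,\,\az}(f)$ and $\phi^{+}_{L,\,\loc}(f)\le\phi^{+}_{L}(f)$; the substantive content is the reverse direction, namely controlling the global maximal functions by their local analogues in $L^{\fai}(\cx)$.

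For the reverse direction the plan is to split
\begin{equation*}
\phi^{\ast}_{L,\,\az}(f)(x)\le\phi^{\ast}_{L,\,\az,\,\loc}(f)(x)+T(f)(x),
\end{equation*}
where $T(f)(x):=\sup\{|\phi(t\sqrt{L})(f)(y)|:\ d(x,y)<\az t,\ t\ge\diam(\cx)\}$ is the ``tail'' piece, and to estimate $T(f)$ uniformly. By the functional calculus for $L$ combined with the Gaussian heat kernel bound \eqref{1.3}, the operator $\phi(t\sqrt{L})$ carries a kernel $K^{\phi}_{t}$ satisfying $|K^{\phi}_{t}(y,z)|\le C_{N}[V(y,t)]^{-1}[1+d(y,z)/t]^{-N}$ for any $N\in\nn$. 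When $t\ge\diam(\cx)$ we have $V(y,t)=\mu(\cx)$, so
\begin{equation*}
\lf|\phi(t\sqrt{L})(f)(y)\r|\le\frac{C}{\mu(\cx)}\int_{\cx}|f(z)|\,d\mu(z)
\end{equation*}
uniformly in $y\in\cx$ and $t\ge\diam(\cx)$.

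Next, since $\phi(0)=1$, the spectral theorem yields $\phi(t\sqrt{L})(f)\to f$ in $L^{2}(\cx)$ as $t\to 0^{+}$; together with the rapid decay of $K^{\phi}_{t}$ this upgrades to non-tangential almost everywhere convergence, giving $|f(x)|\le\phi^{\ast}_{L,\,\az,\,\loc}(f)(x)$ for a.e. $x\in\cx$. Plugging this into the tail bound yields $T(f)\le C\mu(\cx)^{-1}\int_{\cx}\phi^{\ast}_{L,\,\az,\,\loc}(f)\,d\mu$, i.e., a constant multiple of $\mathbf{1}_{\cx}$. A H\"older-type inequality in $L^{\fai}(\cx)$, available because $\mu(\cx)<\fz$ and $\fai$ has uniform upper and lower types and belongs to $\aa_{\fz}(\cx)$, then converts this bound into $\|T(f)\|_{L^{\fai}(\cx)}\ls\|\phi^{\ast}_{L,\,\az,\,\loc}(f)\|_{L^{\fai}(\cx)}$, and combining with the trivial half gives $\|\phi^{\ast}_{L,\,\az}(f)\|_{L^{\fai}(\cx)}\ls\|\phi^{\ast}_{L,\,\az,\,\loc}(f)\|_{L^{\fai}(\cx)}$. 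The radial case is handled by a verbatim argument with $\phi^{+}_{L,\,\loc}$ in place of $\phi^{\ast}_{L,\,\az,\,\loc}$.

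The main obstacle I expect to face is the Musielak-Orlicz H\"older-type step: because $\fai$ is not assumed to be convex and $i(\fai)$ may be strictly less than $1$, the classical Orlicz duality is unavailable, so this estimate has to be carried out directly from the definition of the Luxemburg quasi-norm by exploiting the uniform upper and lower type conditions on $\fai$ together with $\fai\in\aa_{\fz}(\cx)$, in the spirit of \cite{bckyy13b,yys4,ylk17}. Once this Musielak-Orlicz H\"older inequality and the pointwise reduction $|f|\le\phi^{\ast}_{L,\,\az,\,\loc}(f)$ are secured, Theorem \ref{t1.2} follows by combining the reverse embeddings with the equivalences already provided by Theorem \ref{t1.1}.
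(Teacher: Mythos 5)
Your reduction of Theorem \ref{t1.2} to a comparison of the local and global maximal functions is reasonable, and the easy inclusion together with the tail bound $|\phi(t\sqrt{L})(f)(y)|\ls[\mu(\cx)]^{-1}\int_{\cx}|f|\,d\mu$ for $t\ge\diam(\cx)$ is fine. The gap is the final ``H\"older-type'' step: the inequality
\begin{equation*}
\lf\|\lf[\frac{1}{\mu(\cx)}\int_{\cx}g\,d\mu\r]\mathbf{1}_{\cx}\r\|_{L^\fai(\cx)}\ls\|g\|_{L^\fai(\cx)}
\end{equation*}
is false in the generality of Definition \ref{d1.2}. Already for $\fai(x,t):=t^p$ with $p\in(0,1)$ it reads $[\mu(\cx)]^{1/p-1}\int_{\cx}g\,d\mu\ls(\int_{\cx}g^p\,d\mu)^{1/p}$, which fails by an arbitrarily large factor when $g$ is concentrated on a set of small measure (take $g:=N\mathbf{1}_{E}$ with $\mu(E)\to0$); equivalently, $L^\fai(\cx)\not\subset L^1(\cx)$ when the uniform lower type of $\fai$ is below $1$, so the $L^1$-average of $\phi^{\ast}_{L,\,\az,\,\loc}(f)$ is simply not controlled by its $L^\fai$ quasi-norm. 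No manipulation of the Luxemburg quasi-norm via the type conditions can repair this, because the obstruction is already present in the model case $t^p$ -- the very step you flag as the main obstacle is the one that breaks.

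What makes the large-scale part tractable in the paper is that it is bounded by the \emph{infimum}, not the average, of a maximal function: in the proof of Theorem \ref{t1.1} one writes $f=f_1+\psi(R_0\sqrt{L})(f)$ with $R_0=\diam(\cx)/2$ and gets $\|\psi(R_0\sqrt{L})(f)\|_{L^\fz(\cx)}\le\inf_{y\in\cx}\cm_L(f)(y)$ (the aperture $8$ in $\cm_L$ covers all of $\cx$ at the scale $R_0$), after which $\fai(\cx,\inf_{y}\cm_L(f)(y)/\lz)\le\int_{\cx}\fai(x,\cm_L(f)(x)/\lz)\,d\mu(x)$ follows from mere monotonicity; this is exactly the role of the $(\fai,\,q)$-single-atom. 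The paper's route to Theorem \ref{t1.2} is accordingly different from yours: it proves $h^{\phi,\,\az}_{\fai,\,L,\,\mathrm{max}}(\cx)=h^{\phi}_{\fai,\,L,\,\mathrm{rad}}(\cx)$ by rerunning the proof of Theorem \ref{t2.1}, and $H^{M,\,q}_{\fai,\,L,\,\mathrm{at}}(\cx)=h^{\phi,\,\az}_{\fai,\,L,\,\mathrm{max}}(\cx)$ by rerunning the proof of Theorem \ref{t1.1} with the local maximal function (which costs nothing, since all scales used there are already at most $\diam(\cx)$), and then invokes Theorem \ref{t1.1}. To salvage your direct approach you would have to bound the tail $T(f)$ by $\inf_{y\in\cx}$ of a local maximal function, which requires a reproducing-formula argument rather than the crude kernel average.
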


Similarly to the proof of Theorem \ref{t2.1} below, we find that, for any $\az\in(0,\fz)$,
the local Musielak-Orlicz-Hardy spaces
$h^{\phi,\,\az}_{\fai,\,L,\,\mathrm{max}}(\cx)$ and
$h^{\phi}_{\fai,\,L,\,\mathrm{rad}}(\cx)$ coincide with equivalent quasi-norms (see \cite[Proposition 4.4]{bdl}
for the case that $\fai(x,t):=t^p$, with $p\in(0,1]$, for any $x\in\cx$ and $t\in[0,\fz)$).
Moreover, replacing the maximal function $\phi^{\ast}_{L,\,\az}(f)$ by $\phi^{\ast}_{L,\,\az,\,\loc}(f)$
and repeating the proof of Theorem \ref{t1.1}, we know that, for any $q\in([r(\fai)]'I(\fai),\fz]\cap(1,\fz]$,
$M\in\nn\cap(\frac{nq(\fai)}{2i(\fai)},\fz)$ and $\az\in(0,\fz)$,
the spaces $H^{M,\,q}_{\fai,\,L,\,\mathrm{at}}(\cx)$ and $h^{\phi,\,\az}_{\fai,\,L,\,\mathrm{max}}(\cx)$
coincide with equivalent quasi-norms. Using this and Theorem \ref{t1.1}, we then prove Theorem \ref{t1.2},
the details being omitted.

When $\fai(x,t):=t^p$, with $p\in(0,1]$, for any $x\in\cx$ and $t\in[0,\fz)$,
we denote the spaces $H^{M,\,q}_{\fai,\,L,\,\mathrm{at}}(\cx)$,
$H^{\phi,\,\az}_{\fai,\,L,\,\mathrm{max}}(\cx)$, $H^{\phi}_{\fai,\,L,\,\mathrm{rad}}(\cx)$,
$H_{\fai,\,L}(\cx)$, $h^{\phi,\,\az}_{\fai,\,L,\,\mathrm{max}}(\cx)$ and
$h^{\phi}_{\fai,\,L,\,\mathrm{rad}}(\cx)$, respectively, simply by $H^{p,\,M,\,q}_{L,\,\mathrm{at}}(\cx)$,
$H^{p,\,\phi,\,\az}_{L,\,\mathrm{max}}(\cx)$, $H^{p,\,\phi}_{L,\,\mathrm{rad}}(\cx)$,
$H^p_{L}(\cx)$, $h^{p,\,\phi,\,\az}_{L,\,\mathrm{max}}(\cx)$ and
$h^{p,\,\phi}_{L,\,\mathrm{rad}}(\cx)$.
Then, as the corollaries of Theorems \ref{t1.1} and \ref{t1.2}, we have the following conclusions.

\begin{corollary}\label{c1.1}
Let $\cx$ be a space of homogeneous type, $p\in(0,1]$ and
$L$ an operator on $L^2(\cx)$ satisfying Assumptions \ref{a1} and \ref{a2}.
For any $q\in(1,\fz]$, $M\in\nn\cap(\frac{n}{2p},\fz)$ and $\az\in(0,\fz)$,
the Hardy spaces $H^{p,\,M,\,q}_{L,\,\mathrm{at}}(\cx)$,
$H^{p,\,\phi,\,\az}_{L,\,\mathrm{max}}(\cx)$, $H^{p,\,\phi}_{L,\,\mathrm{rad}}(\cx)$
and $H^p_{L}(\cx)$ coincide with equivalent quasi-norms.

Assume further that $\mu(\cx)<\fz$. Then, for any $q\in(1,\fz]$, $M\in\nn\cap(\frac{n}{2p},\fz)$
and $\az\in(0,\fz)$, the Hardy spaces $H^{p,\,M,\,q}_{L,\,\mathrm{at}}(\cx)$,
$H^{p,\,\phi,\,\az}_{L,\,\mathrm{max}}(\cx)$, $H^{p,\,\phi}_{L,\,\mathrm{rad}}(\cx)$
and $H^p_{L}(\cx)$ and the local Hardy spaces $h^{p,\,\phi,\,\az}_{L,\,\mathrm{max}}(\cx)$ and
$h^{p,\,\phi}_{L,\,\mathrm{rad}}(\cx)$ coincide with equivalent quasi-norms.
\end{corollary}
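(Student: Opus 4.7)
The plan is to derive Corollary \ref{c1.1} directly from Theorems \ref{t1.1} and \ref{t1.2} by specializing the growth function to $\fai(x,t):=t^p$ with $p\in(0,1]$. The work amounts to verifying that $\fai$ is a growth function in the sense of Definition \ref{d1.2} and then computing each of the four indices $I(\fai)$, $i(\fai)$, $q(\fai)$ and $r(\fai)$ so that the general index conditions of Theorems \ref{t1.1} and \ref{t1.2} reduce to the cleaner conditions stated in the corollary.

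First I would check that $\fai(x,t):=t^p$ meets all the requirements of Definition \ref{d1.2}. For any fixed $x\in\cx$, the map $t\mapsto t^p$ is non-decreasing with $\fai(x,0)=0$, $\fai(x,t)>0$ for $t\in(0,\fz)$ and $\lim_{t\to\fz}t^p=\fz$, so (i)(a) holds; since $\fai(\cdot,t)\equiv t^p$ is constant in $x$, the measurability in (i)(b) is immediate. Because $\fai$ is constant in the space variable, it trivially belongs to $\aa_1(\cx)\subset\aa_\fz(\cx)$ and to $\rh_\fz(\cx)$, which gives (ii). Finally, for any $s\in[0,\fz)$ and $t\in[0,\fz)$ one has $\fai(x,st)=s^p\fai(x,t)$, so $\fai$ is of uniformly upper type $p$ (with $p\le 1$) and uniformly lower type $p$, giving (iii).

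Next I would compute the four critical indices. From the equality $\fai(x,st)=s^p\fai(x,t)$ and the definitions \eqref{1.4} and \eqref{1.5}, it is direct to see that
\begin{equation*}
I(\fai)=p=i(\fai).
\end{equation*}
From $\fai\in\aa_1(\cx)\cap\rh_\fz(\cx)$, the definitions \eqref{1.6} and \eqref{1.7} give $q(\fai)=1$ and $r(\fai)=\fz$, so that the conjugate exponent satisfies $[r(\fai)]'=1$. Substituting these values, the condition $q\in([r(\fai)]'I(\fai),\fz]\cap(1,\fz]$ from Theorem \ref{t1.1} becomes $q\in(p,\fz]\cap(1,\fz]$, which, since $p\in(0,1]$, simplifies exactly to $q\in(1,\fz]$. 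Similarly, $M\in\nn\cap(\frac{nq(\fai)}{2i(\fai)},\fz)=\nn\cap(\frac{n}{2p},\fz)$, matching the range of $M$ stated in the corollary.

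With these identifications in hand, the first assertion of Corollary \ref{c1.1} (the case of a general space of homogeneous type $\cx$) is the immediate specialization of Theorem \ref{t1.1}, while the second assertion (the case $\mu(\cx)<\fz$, where the local Hardy spaces $h^{p,\,\phi,\,\az}_{L,\,\mathrm{max}}(\cx)$ and $h^{p,\,\phi}_{L,\,\mathrm{rad}}(\cx)$ also enter) is the specialization of Theorem \ref{t1.2}. There is no genuine obstacle here; the only point needing any care is the verification that the indices of $\fai(x,t)=t^p$ take the values above, which collapses the abstract range conditions of Theorems \ref{t1.1} and \ref{t1.2} to the stated Hardy-space conditions.
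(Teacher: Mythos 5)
Your proposal is correct and matches the paper's intent exactly: the paper states Corollary \ref{c1.1} as an immediate specialization of Theorems \ref{t1.1} and \ref{t1.2} to $\fai(x,t)=t^p$, and your verification that $\fai$ is a growth function with $I(\fai)=i(\fai)=p$, $q(\fai)=1$ and $r(\fai)=\fz$ (hence $[r(\fai)]'=1$) is precisely the index computation needed to collapse the abstract hypotheses to $q\in(1,\fz]$ and $M\in\nn\cap(\frac{n}{2p},\fz)$.
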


\begin{remark}\label{r1.2}
The equivalences of the spaces
$H^{p,\,M,\,q}_{L,\,\mathrm{at}}(\cx)$, $H^{p,\,\phi,\,\az}_{L,\,\mathrm{max}}(\cx)$ and
$H^{p,\,\phi}_{L,\,\mathrm{rad}}(\cx)$ and
the equivalences of the spaces $H^{p,\,M,\,q}_{L,\,\mathrm{at}}(\cx)$,
$h^{p,\,\phi,\,\az}_{L,\,\mathrm{max}}(\cx)$ and
$h^{p,\,\phi}_{L,\,\mathrm{rad}}(\cx)$ in Corollary \ref{c1.1}
were obtained in \cite[Theorem 1.3]{sy18} (which requires $\mu(\cx)=\fz$) and
\cite[Theorem 1.4 and Remark 3.2]{bdl1}, respectively.
In particular, if $\cx:=\rn$, the equivalences of $H^p_{L}(\rn)$, $H^{p,\,M,\,q}_{L,\,\mathrm{at}}(\rn)$ and $H^{p,\,\phi,\,\az}_{L,\,\mathrm{max}}(\rn)$ were established in \cite[Theorem 1.4]{sy16}.
When $\mu(\cx)<\fz$, the equivalence of the spaces $H^p_{L}(\cx)$
and $H^{p,\,M,\,q}_{L,\,\mathrm{at}}(\cx)$ in Corollary \ref{c1.1}
is new in this case. Thus, when $\mu(\cx)<\fz$,
Corollary \ref{c1.1} improves the known results.
\end{remark}

\begin{remark}\label{r1.3}
\begin{itemize}
  \item[(i)] Let $\fai$ be as in Definition \ref{d1.2} or \eqref{1.8} with $\omega\equiv1$.
Under the \emph{assumption that} $\mu(\cx)=\fz$, the equivalence of the spaces $H_{\fai,\,L}(\cx)$
and $H^{M,\,q}_{\fai,\,L,\,\mathrm{at}}(\cx)$ was obtained in \cite{bckyy13b,jy11,yys4}.
Thus, Theorem \ref{t1.1} improves the corresponding results in \cite{bckyy13b,jy11,yys4}
via removing the assumption that $\mu(\cx)=\fz$.

In particular, if $\cx:=\rn$, the equivalence of $H^{M,\,q}_{\fai,\,L,\,\mathrm{at}}(\rn)$ and
$H^{\phi,\,\az}_{\fai,\,L,\,\mathrm{max}}(\rn)$ was obtained in \cite[Theorem 1.8]{yys16}.
Under the additional assumption that the heat semigroup of $L$ satisfies the H\"older continuity
(see \cite[Assumption 1.11]{yys16}), the equivalence of $H^{\phi,\,\az}_{\fai,\,L,\,\mathrm{max}}(\rn)$ and $H^{\phi}_{\fai,\,L,\,\mathrm{rad}}(\rn)$ was also obtained in \cite[Theorem 1.12]{yys16}.
Thus, Theorem \ref{t1.1} improves \cite[Theorem 1.12]{yys16} even when $\cx:=\rn$ by removing this
additional assumption for $L$.

\item[(ii)] Theorem \ref{t1.2} is new even when $\fai$ is as in \eqref{1.8} or \eqref{1.9}.
\end{itemize}
\end{remark}

\begin{remark}\label{r1.4}
\begin{itemize}
\item[(i)] Recall that a measurable function $p(\cdot):\ \rn\to(0,\fz)$ is called a
\emph{variable exponent}. For any variable exponent $p(\cdot)$, let
$$p^-:= \einf_{x\in\rn}p(x)\ \ \text{and}\ \ p^+:= \esup_{x\in\rn}p(x).$$
Denote by $\cp(\rn)$ the collection of all variable exponents $p(\cdot)$ satisfying $0<p^-\le p^+<\fz$.
It is known that, if
\begin{equation}\label{1.12}
\fai(x,t):=t^{p(x)}\ \ \text{for any}\  x\in\rn\ \text{and}\  t\in[0,\fz),
\end{equation}
where $p(\cdot)\in\cp(\rn)$, then the Musielak-Orlicz space $L^\fai(\rn)$ is just the variable exponent
space $L^{p(\cdot)}(\rn)$.

Moreover, the variable exponent Hardy space $H^{p(\cdot)}_L(\rn)$ associated with $L$ is defined
in the same way as $H_{\fai,\,L}(\rn)$ with $\|\cdot\|_{L^\fai(\rn)}$ replaced by $\|\cdot\|_{L^{p(\cdot)}(\rn)}$.
Under the assumptions that $0<p^-\le p^+\le1$ and
$p(\cdot)$ satisfies the so-called \emph{globally log-H\"older continuous condition},
which is denoted by $p(\cdot)\in C^{\log}(\rn)$ (see, for example, \cite{yyz14,yzz18,yz16,zy16} for the details),
the real-variable characterizations of $H^{p(\cdot)}_L(\rn)$ or its local version $h^{p(\cdot)}_{L}(\rn)$,
including atoms, Lusin area functions, non-tangential and radial maximal functions associated with $L$,
were established in \cite{abdr17,yzz18,yz16,zy16}. It is worth pointing out that a general Musielak-Orlicz
function $\fai$ as in Definition \ref{d1.2} may not have the form as in \eqref{1.12} (see, for example,
\cite{k14,ylk17,yys1}). On the other hand, it was proved in \cite[Remark 2.23(iii)]{yyz14}
that there exists a variable exponent function $p(\cdot)\in C^{\log}(\rn)$, but $t^{p(\cdot)}$ is not a
uniformly Muckenhoupt weight, which was required in Definition \ref{d1.2}.
Thus, variable exponent Hardy spaces associated with operators in \cite{abdr17,yzz18,yz16,zy16}
and Musielak-Orlicz-Hardy spaces associated with operators in this article do not
cover each other even when $\cx:=\rn$.

\item[(ii)] Let $\mathcal{L}$ be a degenerate Schr\"odinger operator on $\rn$,
which is defined by setting
$$\mathcal{L}:=-\frac{1}{\omega}\mathrm{div}(A\nabla)+V,
$$
where $\omega\in A_2(\rn)$, $A:=\{a_{ij}\}_{i,\,j=1}^n$ is a real symmetric matrix satisfying that
there exists a positive constant $C$ such that, for any $x,\,\xi\in\rn$,
$$C^{-1}\omega(x)|\xi|^2\le\sum_{i,\,j=1}^na_{ij}(x)\xi_i\xi_j\le C\omega(x)|\xi|^2
$$
with $|\xi|:=(\xi_1^2+\cdots+\xi_n^2)^{1/2}$, and $V\ge0$. Assume that $(\cx_0,d,\mu):=(\rn,|\cdot|,\omega dx)$,
where $|\cdot|$ denotes the usual Euclidean distance on $\rn$ and $dx$ the Lebesgue measure on $\rn$. Then it is easy to see that $\mathcal{L}$ is non-negative self-adjoint on $L^2(\cx_0)$ and hence $\mathcal{L}$ satisfies Assumption \ref{a1}. Moreover,
it is well known that $\mathcal{L}$ satisfies Assumption \ref{a2} (see, for example, \cite[Theorem 2.2]{d05}).

Under some additional assumptions for $\omega$ and $V$ (see \cite{d05,hll18}
for the details), the real-variable characterizations of the Hardy space $H^1_{\mathcal{L}}(\cx_0)$,
including the radial maximal function and the Lusin area function associated with $\mathcal{L}$,
were established in \cite{d05,hll18}.
Thus, even when $\cx:=\cx_0$ and $L:=\mathcal{L}$, Corollary \ref{c1.1}
improves the main results obtained in \cite{d05,hll18} by extending the range
$p=1$ of the exponent $p$ into $p\in(0,1]$.
\end{itemize}
\end{remark}

To obtain more equivalent characterizations of $H_{\fai,\,L}(\cx)$, we further introduce
the following assumptions for $L$.

\begin{assumption}\label{a3}
The kernels of the semigroup $\{e^{-tL}\}_{t>0}$, denoted by $\{K_t\}_{t>0}$,
satisfy the H\"older continuous condition, namely,
there exist positive constants $C_3$, $c_3$ and $\dz_0\in(0,1]$ such that,
for any $t\in(0,[\diam(\cx)]^2)$ and $x,\,y,\,z\in\cx$ with $d(y,z)\le\sqrt{t}$,
$$|K_t(x,y)-K_t(x,z)|\le\frac{C_3}{V(x,\sqrt{t})}\lf[\frac{d(y,z)}{\sqrt{t}}\r]^{\dz_0}\exp\lf\{-\frac{[d(x,y)]^2}{c_3t}\r\}.
$$
\end{assumption}

\begin{assumption}\label{a4}
The semigroup $\{e^{-tL}\}_{t>0}$ is conservative, namely, for any $t\in(0,\fz)$ and $x\in\cx$,
\begin{equation*}
\int_{\cx}K_t(x,y)\,d\mu(y)=1.
\end{equation*}
\end{assumption}

Now we introduce the following Musielak-Orlicz-Hardy spaces on $\cx$ independent of the operator $L$.

\begin{definition}\label{d1.7}
Let $\cx$ and $\fai$ be as in Definition \ref{d1.3}.
Assume that $q\in(1,\fz]$, $M\in\nn$ and $B\subset\cx$ is a ball.
A function $\az\in L^q(\cx)$ is called a $(\fai,\,q,\,0)$-\emph{atom}
associated with the ball $B$ if
\begin{itemize}
  \item[(i)] $\supp(\az)\subset B$;
  \item[(ii)] $\|\az\|_{L^q(\cx)}\le [\mu(B)]^{1/q}\|\mathbf{1}_B\|^{-1}_{L^\fai(\cx)}$;
  \item[(iii)] $\int_B\az(x)\,d\mu(x)=0$.
\end{itemize}
If $\mu(\cx)<\fz$, a function $\az$ on $\cx$ is called a \emph{$(\fai,\,q)$-single-atom} if
$$\|\az\|_{L^q(\cx)}\le[\mu(\cx)]^{1/q}\|\mathbf{1}_\cx\|^{-1}_{L^\fai(\cx)}.$$
Then the \emph{atomic Musielak-Orlicz-Hardy type space} $H^q_{\fai,\,\mathrm{at}}(\cx)$
is defined via replacing $(\fai,\,q,\,M)_L$-atoms by $(\fai,\,q,\,0)$-atoms in the definition of
the space $H^{M,\,q}_{\fai,\,L,\,\mathrm{at}}(\cx)$.
\end{definition}

We point out that, when $\fai(x,t):=t^p$, with $p\in(\frac{n}{n+1},1]$,
for any $x\in\cx$ and $t\in[0,\fz)$, the space $H^q_{\fai,\,\mathrm{at}}(\cx)$
coincides with the Hardy space $H^p_{CW}(\cx)$ introduced by Coifman and Weiss \cite{cw77}.

Assume further that the operator satisfies Assumptions \ref{a3} and \ref{a4}.
Using Theorem \ref{t1.1} and Assumptions \ref{a3} and \ref{a4}, we have the following conclusion.

\begin{theorem}\label{t1.3}
Assume that $\cx$  is a space of homogeneous type.
Let $L$ be an operator on $L^2(\cx)$ satisfying Assumptions \ref{a1}, \ref{a2}, \ref{a3} and \ref{a4},
and $\fai$ as in Definition \ref{d1.2}. Assume that $\dz_0\in(0,1]$, $r(\fai)$, $I(\fai)$, $q(\fai)$
and $i(\fai)$ are, respectively, as in Assumption \ref{a3}, \eqref{1.7}, \eqref{1.4}, \eqref{1.6} and \eqref{1.5},
and $[r(\fai)]'$ denotes the conjugate exponent of $r(\fai)$. If $nq(\fai)/i(\fai)<n+\dz_0$,
then, for any $q\in([r(\fai)]'I(\fai),\fz]\cap(1,\fz]$,
$M\in\nn\cap(\frac{nq(\fai)}{2i(\fai)},\fz)$ and $\az\in(0,\fz)$,
the spaces $H_{\fai,\,L}(\cx)$, $H^{M,\,q}_{\fai,\,L,\,\mathrm{at}}(\cx)$,
$H^{\phi,\,\az}_{\fai,\,L,\,\mathrm{max}}(\cx)$, $H^{\phi}_{\fai,\,L,\,\mathrm{rad}}(\cx)$ and
$H^{q}_{\fai,\,\mathrm{at}}(\cx)$ coincide with equivalent quasi-norms. In particular, if
$\mu(\cx)<\fz$, the spaces $H_{\fai,\,L}(\cx)$, $H^{M,\,q}_{\fai,\,L,\,\mathrm{at}}(\cx)$,
$H^{\phi,\,\az}_{\fai,\,L,\,\mathrm{max}}(\cx)$, $H^{\phi}_{\fai,\,L,\,\mathrm{rad}}(\cx)$,
$H^{q}_{\fai,\,\mathrm{at}}(\cx)$, $h^{\phi,\,\az}_{\fai,\,L,\,\mathrm{max}}(\cx)$ and
$h^{\phi}_{\fai,\,L,\,\mathrm{rad}}(\cx)$ coincide with equivalent quasi-norms.
\end{theorem}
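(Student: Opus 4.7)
The plan is to exploit Theorem \ref{t1.1}, which already furnishes the equivalence among $H_{\fai,\,L}(\cx)$, $H^{M,\,q}_{\fai,\,L,\,\mathrm{at}}(\cx)$, $H^{\phi,\,\az}_{\fai,\,L,\,\mathrm{max}}(\cx)$ and $H^{\phi}_{\fai,\,L,\,\mathrm{rad}}(\cx)$, and to splice in $H^{q}_{\fai,\,\mathrm{at}}(\cx)$ by establishing the two inclusions
$$H^{M,\,q}_{\fai,\,L,\,\mathrm{at}}(\cx)\subset H^{q}_{\fai,\,\mathrm{at}}(\cx)\qquad\text{and}\qquad H^{q}_{\fai,\,\mathrm{at}}(\cx)\subset H^{\phi}_{\fai,\,L,\,\mathrm{rad}}(\cx).$$
The first inclusion will hinge on the conservation Assumption \ref{a4}, which forces built-in cancellation for $(\fai,\,q,\,M)_L$-atoms; the second will hinge on the H\"older regularity Assumption \ref{a3}, which supplies the off-diagonal smoothness of the heat kernel needed to convert the cancellation of a $(\fai,\,q,\,0)$-atom into a pointwise decay bound on its maximal function. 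Neither assumption was available in the setting of Theorem \ref{t1.1}.

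For the first inclusion, let $\az=L^M b$ be a $(\fai,\,q,\,M)_L$-atom associated with a ball $B$. The support condition $\supp\az\subset B$ comes from Definition \ref{d1.4}(I)(ii), while the choice $j=M$ in Definition \ref{d1.4}(I)(iii) yields $\|\az\|_{L^q(\cx)}\le [V(B)]^{1/q}\|\mathbf{1}_B\|^{-1}_{L^\fai(\cx)}$, which already matches the size bound in Definition \ref{d1.7}. Only the cancellation $\int_\cx\az\,d\mu=0$ remains. Using Assumption \ref{a4}, I have $e^{-tL}\mathbf{1}=\mathbf{1}$, so a standard cutoff approximation (approximating $\mathbf{1}$ by $L^2$-cutoffs supported on larger and larger balls) combined with the self-adjointness of $L$ yields
$$\int_\cx L^M b\,d\mu=\lim_{t\to 0^+}\lf\langle L^M b,\,e^{-tL}\mathbf{1}\r\rangle=\lim_{t\to 0^+}\lf\langle b,\,L^M e^{-tL}\mathbf{1}\r\rangle=0,$$
since $L^M e^{-tL}\mathbf{1}=e^{-tL}L^M\mathbf{1}=0$. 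Hence every $(\fai,\,q,\,M)_L$-atom is also a $(\fai,\,q,\,0)$-atom, so any atomic $(\fai,\,q,\,M)_L$-representation is automatically an atomic $(\fai,\,q,\,0)$-representation with identical $\blz$-functional.

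For the second inclusion, the core estimate is $\|\phi^+_L(a)\|_{L^\fai(\cx)}\ls \|\mathbf{1}_B\|^{-1}_{L^\fai(\cx)}$ uniformly over all $(\fai,\,q,\,0)$-atoms $a$ associated with $B=B(x_B,r_B)$. I split $\cx=2B\cup(\cx\setminus 2B)$: on $2B$, the bound follows from the $L^q\to L^q$ boundedness of $\phi^+_L$ (a consequence of \eqref{1.3}) combined with the transfer between $L^q$ and $L^\fai$ controlled by $q>[r(\fai)]'I(\fai)$ and $\fai\in\aa_{q(\fai)}(\cx)$. On $(2B)^c$, the vanishing integral of $a$ allows me to insert a constant and write, for $x\notin 2B$ and $t\in(0,\fz)$,
$$\phi(t\sqrt{L})(a)(x)=\int_B\lf[K^\phi_t(x,y)-K^\phi_t(x,x_B)\r]a(y)\,d\mu(y),$$
where $K^\phi_t$ denotes the kernel of $\phi(t\sqrt{L})$. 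Passing through the functional-calculus identity $\phi(t\sqrt{L})=\int_\rr \wz\phi(s)\,e^{-s^2t^2L}\,ds$ for a suitable $\wz\phi\in\cs(\rr)$ of rapid decay reduces the required H\"older estimate on $K^\phi_t$ to the H\"older estimate on $K_\tau$ supplied by Assumption \ref{a3}. This yields the pointwise bound
$$\phi^+_L(a)(x)\ls\frac{1}{V(x_B,d(x,x_B))}\lf[\frac{r_B}{d(x,x_B)}\r]^{\dz_0}\|a\|_{L^1(\cx)}$$
for $x\notin 2B$; a standard dyadic annular decomposition together with the uniform lower type $i(\fai)$ of $\fai$ and $\fai\in\aa_{q(\fai)}(\cx)$ then converts this decay into the desired $L^\fai$-bound, and the hypothesis $nq(\fai)/i(\fai)<n+\dz_0$ is precisely what ensures convergence of the resulting geometric sum $\sum_k 2^{knq(\fai)-k(n+\dz_0)i(\fai)}$. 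The main technical obstacle is exactly this last step: transferring the kernel-level H\"older regularity of Assumption \ref{a3} through the functional calculus and summing the resulting $L^\fai$-bounds over dyadic annuli in a way that uses the full range of $\dz_0$.

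Combining the two inclusions with Theorem \ref{t1.1} yields the full equivalence when $\mu(\cx)=\fz$. When $\mu(\cx)<\fz$, the only additional ingredient in Definition \ref{d1.7} is the $(\fai,\,q)$-single-atom, whose support is all of $\cx$ and which carries no cancellation; its $\phi^+_L$ is controlled directly via \eqref{1.3} and the $L^q$ size bound, without requiring Assumption \ref{a3}. Once the chain of equivalences including $H^q_{\fai,\,\mathrm{at}}(\cx)$ is established in the finite-measure case, invoking Theorem \ref{t1.2} automatically appends the local spaces $h^{\phi,\,\az}_{\fai,\,L,\,\mathrm{max}}(\cx)$ and $h^{\phi}_{\fai,\,L,\,\mathrm{rad}}(\cx)$, closing the argument.
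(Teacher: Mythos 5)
Your proposal follows essentially the same route as the paper: reduce via Theorem \ref{t1.1} (and \ref{t1.2}) to the identity $H^{M,\,q}_{\fai,\,L,\,\mathrm{at}}(\cx)=H^{q}_{\fai,\,\mathrm{at}}(\cx)$, obtain one inclusion by showing that Assumption \ref{a4} forces $\int_\cx\az\,d\mu=0$ for every $(\fai,\,q,\,M)_L$-atom (so it is already a $(\fai,\,q,\,0)$-atom), and obtain the converse by estimating the radial maximal function of a $(\fai,\,q,\,0)$-atom, using Assumption \ref{a3} plus the cancellation on the far annuli and the condition $nq(\fai)/i(\fai)<n+\dz_0$ to sum the dyadic series. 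One step needs adjustment: in the finite-measure case, Assumption \ref{a3} provides H\"older continuity of $K_t$ only for $t\in(0,[\diam(\cx)]^2)$, so your far-field estimate cannot control $\sup_{t\in(0,\fz)}|\phi(t\sqrt{L})(a)(x)|$ — for $t\gtrsim\diam(\cx)$ and $x$ in a far annulus the kernel bound \eqref{1.3} alone gives no decay in $j$, and the cancellation cannot be exploited without the H\"older estimate. The paper therefore runs this estimate for the \emph{local} radial maximal function $\az^+_{L,\,\loc}$ (where $t<\diam(\cx)$), concluding $f\in h_{\fai,\,L,\,\mathrm{rad}}(\cx)$, and only then invokes Theorem \ref{t1.2} to pass to $H^{M,\,q}_{\fai,\,L,\,\mathrm{at}}(\cx)$; you should reroute your second inclusion the same way rather than targeting $H^{\phi}_{\fai,\,L,\,\mathrm{rad}}(\cx)$ directly when $\mu(\cx)<\fz$.
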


When $\fai(x,t):=t^p$, with $p\in(0,1]$, for any $x\in\cx$ and $t\in[0,\fz)$,
we denote the space $H^{q}_{\fai,\,\mathrm{at}}(\cx)$ simply by $H^{p,\,q}_{\mathrm{at}}(\cx)$.
As the corollary of Theorem \ref{t1.3}, we have the following conclusion.

\begin{corollary}\label{c1.2}
Let $\cx$  be a space of homogeneous type and
$L$ an operator on $L^2(\cx)$ satisfying Assumptions \ref{a1}, \ref{a2}, \ref{a3} and \ref{a4}.
Assume that $\dz_0\in(0,1]$ is as in Assumption \ref{a3} and $p\in(\frac{n}{n+\dz_0},1]$.
Then, for any $q\in(1,\fz]$, $M\in\nn\cap(\frac{n}{2p},\fz)$ and $\az\in(0,\fz)$,
the spaces $H^p_{L}(\cx)$, $H^{p,\,M,\,q}_{L,\,\mathrm{at}}(\cx)$,
$H^{p,\,\phi,\,\az}_{L,\,\mathrm{max}}(\cx)$, $H^{p,\,\phi}_{L,\,\mathrm{rad}}(\cx)$ and
$H^{p,\,q}_{\mathrm{at}}(\cx)$ coincide with equivalent quasi-norms. In particular, if
$\mu(\cx)<\fz$, the spaces $H^p_{L}(\cx)$, $H^{p,\,M,\,q}_{L,\,\mathrm{at}}(\cx)$,
$H^{p,\,\phi,\,\az}_{L,\,\mathrm{max}}(\cx)$, $H^{p,\,\phi}_{L,\,\mathrm{rad}}(\cx)$,
$H^{p,\,q}_{\mathrm{at}}(\cx)$, $h^{p,\,\phi,\,\az}_{L,\,\mathrm{max}}(\cx)$ and
$h^{p,\,\phi}_{L,\,\mathrm{rad}}(\cx)$ coincide with equivalent quasi-norms.
\end{corollary}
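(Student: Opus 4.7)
The plan is to deduce Corollary \ref{c1.2} directly from Theorem \ref{t1.3} by specializing the Musielak-Orlicz function to $\fai(x,t):=t^p$ for all $x\in\cx$ and $t\in[0,\fz)$, and then verifying that the numerical hypotheses of Theorem \ref{t1.3} translate precisely into the stated range $p\in(\frac{n}{n+\dz_0},1]$ together with the conditions on $q$, $M$ and $\az$.

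First, I would check that $\fai(x,t):=t^p$ with $p\in(0,1]$ is a growth function in the sense of Definition \ref{d1.2}. For each $x\in\cx$, $t\mapsto t^p$ is an Orlicz function, and $\fai(\cdot,t)$ is trivially measurable since it is independent of $x$. The weight $x\mapsto 1$ belongs to every Muckenhoupt and reverse H\"older class, so $\fai\in\aa_1(\cx)\cap\rh_\fz(\cx)\subset\aa_\fz(\cx)$. Finally, $\fai$ is of uniformly upper and lower type exactly $p$, hence in particular of uniformly upper type $1$ (using $p\le1$) and of uniformly lower type $p\in(0,1]$, as required in Definition \ref{d1.2}(iii). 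From the definitions \eqref{1.4}--\eqref{1.7}, one reads off
\begin{equation*}
I(\fai)=i(\fai)=p,\qquad q(\fai)=1,\qquad r(\fai)=\fz,\qquad [r(\fai)]'=1.
\end{equation*}

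Substituting these indices into the hypotheses of Theorem \ref{t1.3}, the admissible parameter ranges become $q\in([r(\fai)]'I(\fai),\fz]\cap(1,\fz]=(p,\fz]\cap(1,\fz]=(1,\fz]$, since $p\le1$; $M\in\nn\cap(\frac{nq(\fai)}{2i(\fai)},\fz)=\nn\cap(\frac{n}{2p},\fz)$; and the smoothness condition $nq(\fai)/i(\fai)<n+\dz_0$ reduces to $n/p<n+\dz_0$, i.e., $p>\frac{n}{n+\dz_0}$. These are exactly the hypotheses of Corollary \ref{c1.2}, so Theorem \ref{t1.3} immediately yields the equivalence of the quasi-norms on $H^p_L(\cx)$, $H^{p,\,M,\,q}_{L,\,\mathrm{at}}(\cx)$, $H^{p,\,\phi,\,\az}_{L,\,\mathrm{max}}(\cx)$, $H^{p,\,\phi}_{L,\,\mathrm{rad}}(\cx)$ and $H^{p,\,q}_{\mathrm{at}}(\cx)$, together with their local counterparts $h^{p,\,\phi,\,\az}_{L,\,\mathrm{max}}(\cx)$ and $h^{p,\,\phi}_{L,\,\mathrm{rad}}(\cx)$ whenever $\mu(\cx)<\fz$. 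Since the argument is a pure specialization of indices, there is no substantive obstacle; the only minor point requiring attention is verifying that the strict inequality $nq(\fai)/i(\fai)<n+\dz_0$ in Theorem \ref{t1.3} is faithfully captured by the open endpoint in $p\in(\frac{n}{n+\dz_0},1]$, which it is.
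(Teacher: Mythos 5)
Your proposal is correct and is precisely the argument the paper intends: Corollary \ref{c1.2} is stated as an immediate specialization of Theorem \ref{t1.3} to $\fai(x,t):=t^p$, and your computation of the indices $I(\fai)=i(\fai)=p$, $q(\fai)=1$, $r(\fai)=\fz$, $[r(\fai)]'=1$ correctly translates the hypotheses of Theorem \ref{t1.3} into the ranges $q\in(1,\fz]$, $M\in\nn\cap(\frac{n}{2p},\fz)$ and $p\in(\frac{n}{n+\dz_0},1]$.
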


\begin{remark}\label{r1.5}
\begin{itemize}
\item[(i)] We point out that the equivalences of the Hardy spaces
$$H^{p,\,M,\,q}_{L,\,\mathrm{at}}(\cx),\ H^{p,\,\phi,\,\az}_{L,\,\mathrm{max}}(\cx),
\ H^{p,\,\phi}_{L,\,\mathrm{rad}}(\cx),\ H^{p,\,q}_{\mathrm{at}}(\cx),\
h^{p,\,\phi,\,\az}_{L,\,\mathrm{max}}(\cx)\ \mathrm{and}\ h^{p,\,\phi}_{L,\,\mathrm{rad}}(\cx)$$
in Corollary \ref{c1.2} were obtained in \cite[Theorems 1.4 and 1.8 and Remark 3.2]{bdl1}.

\item[(ii)] Theorem \ref{t1.3} is new even when $\fai$ is as in \eqref{1.8} or \eqref{1.9}.
\end{itemize}
\end{remark}

The layout of this article is as follows. Section \ref{s2}
is devoted to the proof of Theorem \ref{t1.1}. In Section \ref{s3},
we give the proof of Theorem \ref{t1.3}.

Moreover, in Section \ref{s4}, as applications of
Theorems \ref{t1.1}, \ref{t1.2} and \ref{t1.3},  we characterize  the
``geometric" Musielak-Orlicz-Hardy spaces $H_{\fai,\,r}(\boz)$ and $H_{\fai,\,z}(\boz)$
on strongly Lipschitz domains in $\rn$ by means of atoms, non-tangential and radial maximal
functions associated with second-order self-adjoint elliptic operators
with Dirichlet and Neumann boundary conditions.
We point out that the results obtained for the spaces $H_{\fai,\,r}(\boz)$
and $H_{\fai,\,z}(\boz)$  in Section \ref{s4} below improve
the corresponding results established in \cite{ar03,bdl1,yys2,yys3}
(see Remarks \ref{r4.1}, \ref{r4.2} and \ref{r4.3} below for the details) by
removing the additional assumption that $\boz$ is unbounded.
Some further applications of Theorems \ref{t1.1} and \ref{t1.2} to Hardy type
spaces on domains will be given in a forthcoming article.

Finally, we make some conventions on notation. Throughout the whole
article, we always denote by $C$ a \emph{positive constant} which is
independent of the main parameters, but it may vary from line to
line. We also use $C_{(\gz,\,\bz,\,\ldots)}$ to denote a  \emph{positive
constant} depending on the indicated parameters $\gz,$ $\bz$,
$\ldots$. The \emph{symbol} $A\ls B$ means that $A\le CB$. If $A\ls
B$ and $B\ls A$, then we write $A\sim B$. The  \emph{symbol}
$\lfz s\rfz$ for any $s\in\rr$ denotes the largest integer not greater
than $s$. For any ball $B:=B(x_B,r_B)\subset\cx$, with some $x_B\in\cx$ and
$r_B\in (0,\fz)$, and $\az\in (0,\fz)$, let $\az B:=B(x_B,\az r_B)$.
For any subset $E$ of $\cx$, we denote by $E^\complement$
the \emph{set} $\cx\setminus E$ and by $\mathbf{1}_{E}$ its \emph{characteristic function}. We also let $\nn:=\{1,\, 2,\, \ldots\}$ and
$\zz_+:=\nn\cup\{0\}$. For any ball $B$ in $\cx$ and $j\in\zz_+$,
let $S_j(B):=(2^{j+1}B)\setminus(2^{j}B)$ with $j\in\nn$ and $S_0(B):=2B$.
Finally, for $q\in[1,\fz]$, we denote by $q'$ its \emph{conjugate exponent}, namely, $1/q + 1/q'= 1$.

\section{Proof of Theorem \ref{t1.1}\label{s2}}

In this section, we show Theorem \ref{t1.1}.
We begin with some auxiliary conclusions.

Let $\cx$  be a space of homogeneous type.
For a non-negative self-adjoint operator $L$ on $L^2(\cx)$,
denote by $E_L$ the spectral measure associated with $L$.
Then, for any bounded Borel function $F:\ [0,\fz)\rightarrow\cc$,
the operator $F(L):\ L^2(\cx)\rightarrow L^2(\cx)$ is defined by the formula
\begin{align*}
F(L):=\int_0^\fz F(\lz)\,dE_L(\lz).
\end{align*}

Then we have the following lemma, which was just \cite[Lemma 3.5]{hlmmy}.
In what follows, $C_c^\fz(\rr)$ denotes the set of all infinitely differentiable
functions on $\rr$ with compact supports and,
for any $\phi\in C_c^\fz(\rr)$, we use $\Phi$ to denote its Fourier transform,
namely, for any $\xi\in\rr$,
$$\Phi(\xi):=\widehat{\phi}(\xi):=\int_{\rr}\phi(x)e^{-2\pi x\xi}\,dx.$$

\begin{lemma}\label{l2.1}
Let $\cx$  be a space of homogeneous type.
Assume that the operator $L$ satisfies Assumptions \ref{a1} and \ref{a2}.
Let $\phi\in C^\fz_c(\rr)$ be even and $\supp(\phi)\subset(-1,1)$. Denote by
$\Phi$ the Fourier transform of $\phi$. Then, for any $k\in\zz_+$,
the kernels $\{K_{(t^2L)^k\Phi(t\sqrt{L})}\}_{t>0}$ of the operators $\{(t^2L)^k\Phi(t\sqrt{L})\}_{t>0}$ satisfy
that, for any $t\in(0,\fz)$ and $x,\,y\in\cx$,
\begin{align*}
\supp\lf(K_{(t^2L)^k\Phi(t\sqrt{L})}\r)\subset\{(x,y)\in\cx\times\cx:\ d(x,y)\le t\}.
\end{align*}
Moreover, for any $k\in\zz_+$, there exists a positive constant $C_{(k)}$,
depending on $k$, such that, for any $t\in(0,\fz)$ and $x,\,y\in\cx$,
\begin{align*}
\lf|K_{(t^2L)^k\Phi(t\sqrt{L})}(x,y)\r|\le\frac{C_{(k)}}{\mu(B(x,t))}.
\end{align*}
\end{lemma}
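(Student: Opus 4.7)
The plan is to reduce the general $k$ to $k=0$ and then derive the two conclusions from, respectively, finite speed of propagation for the wave operator $\cos(s\sqrt{L})$ and the Gaussian pointwise estimate \eqref{1.3} on the heat kernel.

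First, I would reduce to $k=0$. Since $\phi\in C_c^\infty(\rr)$ is even with $\supp\phi\subset(-1,1)$, each even-order derivative $\phi^{(2k)}$ retains both properties. The Fourier derivative identity gives $\xi^{2k}\widehat{\phi}(\xi)=c_k\widehat{\phi^{(2k)}}(\xi)$ for an explicit constant $c_k$ depending only on $k$ and the chosen Fourier convention, so $(t^2L)^k\Phi(t\sqrt{L})=c_k\widehat{\phi^{(2k)}}(t\sqrt{L})$ by the spectral theorem. Hence both conclusions follow once they are established for $\widehat{\psi}(t\sqrt{L})$ with an arbitrary even $\psi\in C_c^\infty((-1,1))$, absorbing the dimensional constant $c_k$ into $C_{(k)}$.

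Second, for the support claim I would exploit \emph{finite speed of propagation} for $\cos(s\sqrt{L})$: for each $s\in\rr$, the Schwartz kernel of $\cos(s\sqrt{L})$ is supported in $\{(x,y):\,d(x,y)\le|s|\}$. This is classical and follows from the Davies--Gaffney estimate
$$\lf|\lf\langle e^{-zL}f_1,f_2\r\rangle\r|\le\exp\!\Bigl(-\frac{[d(\supp f_1,\supp f_2)]^2}{4\Re z}\Bigr)\|f_1\|_{L^2}\|f_2\|_{L^2},\qquad \Re z>0,$$
which is itself a direct consequence of the Gaussian bound in Assumption \ref{a2}, together with a Phragm\'en--Lindel\"of argument applied to the analytic family $z\mapsto\langle e^{-zL}f_1,f_2\rangle$ and a subordination formula relating $e^{-zL}$ to $\cos(s\sqrt{L})$. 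Fourier inversion and the evenness of $\psi$ then express $\widehat{\psi}(t\sqrt{L})$ as a weighted integral of $\cos(ts\sqrt{L})$ over $s\in[0,1]$; since each integrand has kernel supported in $\{d(x,y)\le ts\}\subset\{d(x,y)\le t\}$, the asserted support inclusion follows.

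Third, for the uniform pointwise bound $|K(x,y)|\le C_{(k)}/\mu(B(x,t))$, I would combine the support property with the Gaussian heat kernel estimate \eqref{1.3}. A convenient route is to represent $\widehat{\psi}(t\sqrt{L})$ through a subordination integral as a rapidly convergent superposition of the heat semigroups $\{e^{-st^2L}\}_{s>0}$ against a weight determined by $\psi$; the pointwise bound $|K_{e^{-st^2L}}(x,y)|\le C/V(x,t\sqrt{s})$ from \eqref{1.3}, combined with the doubling property \eqref{1.2}, then yields the uniform bound $C_{(k)}/V(x,t)$, and the support constraint from the previous step removes the need for any Gaussian decay factor in $d(x,y)$ in the final estimate. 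I expect this step to be the main technical obstacle: the elementary factorization $\widehat{\psi}(t\sqrt{L})=e^{-t^2L}[e^{t^2L}\widehat{\psi}(t\sqrt{L})]$ fails because $e^{\xi^2}\widehat{\psi}(\xi)$ is generally unbounded on $\rr$ (a Schwartz function need not decay faster than $e^{-\xi^2}$), so one must delicately balance preservation of compact support against Gaussian pointwise control, exactly as in the proof of \cite[Lemma 3.5]{hlmmy}.
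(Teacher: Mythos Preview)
The paper does not give its own proof of this lemma; it simply records that the statement ``was just \cite[Lemma 3.5]{hlmmy}''. Your outline is precisely the standard route to that result: reduction to $k=0$ via the observation that $(t^2L)^k\Phi(t\sqrt{L})=c_k\,\widehat{\phi^{(2k)}}(t\sqrt{L})$ with $\phi^{(2k)}$ again even and supported in $(-1,1)$; the support claim via finite speed of propagation for $\cos(s\sqrt{L})$ (which, as you note, follows from the Davies--Gaffney estimates implied by Assumption~\ref{a2}); and the pointwise bound by combining the heat kernel estimate \eqref{1.3} with doubling. Your candid remark that the naive factorization $\widehat{\psi}(t\sqrt{L})=e^{-t^2L}\bigl[e^{t^2L}\widehat{\psi}(t\sqrt{L})\bigr]$ fails, and that the genuine work sits exactly where \cite[Lemma 3.5]{hlmmy} does it, is accurate and honest. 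There is nothing to correct; your proposal and the paper both ultimately rest on the same external reference.
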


Let $L^1_{\loc}(\cx)$ denote the set of all locally integral functions on $\cx$
and $\cm$ the \emph{Hardy-Littlewood maximal operator} on $\cx$, namely,
for any $f\in L^1_{\loc}(\cx)$ and $x\in\cx$,
$$\cm(f)(x):=\sup_{B\ni x}\frac{1}{V(B)}\int_B|f(y)|\,d\mu(y),$$
where the supremum is taken over all balls $B\ni x$.

Moreover, we have the following properties of growth functions,
which were obtained in \cite[Lemma 4.1]{k14} and \cite[Lemma 2.8]{yys4}.

\begin{lemma}\label{l2.2}
Let $\fai$ be as in Definition \ref{d1.2}.
\begin{itemize}
  \item[{\rm(i)}] There exists a positive constant $C$ such that, for any
$(x,t_j)\in\cx\times[0,\fz)$ with $j\in\nn$,
$\fai(x,\sum_{j=1}^{\fz}t_j)\le C\sum_{j=1}^{\fz}\fai(x,t_j)$.

  \item[{\rm(ii)}] Let $\wz{\fai}(x,t):=\int_0^t\frac{\fai(x,s)}{s}\,ds$ for any
$(x,t)\in\cx\times[0,\fz)$. Then $\wz{\fai}$ is equivalent to $\fai$, namely,
there exists a positive constant $C$ such that, for any $(x,t)\in\cx\times[0,\fz)$,
$C^{-1}\fai(x,t)\le\wz{\fai}(x,t)\le C\fai(x,t)$.
  \item[{\rm(iii)}] If $p\in(1,\fz)$ and $\fai\in \aa_{p}(\cx)$, then
there exists a positive constant $C$, depending on $p$ and $\aa_{p}(\vz)$, such that, for any measurable
functions $f$ on $\cx$ and $t\in[0,\fz)$,
$$\int_{\cx}\lf[\cm(f)(x)\r]^p\fai(x,t)\,d\mu(x)\le
C\int_{\cx}|f(x)|^p\fai(x,t)\,d\mu(x).$$
  \item[{\rm(iv)}] If $\fai\in \aa_{p}(\cx)$ with $p\in[1,\fz)$, then
there exists a positive constant $C$, depending on $p$ and $\aa_{p}(\vz)$, such that, for any $t\in(0,\fz)$ and balls $B_1,\,B_2\subset\cx$ with $B_1\subset B_2$,
$\frac{\fai(B_2,t)}{\fai(B_1,t)}\le
C[\frac{V(B_2)}{V(B_1)}]^p$.
\end{itemize}
\end{lemma}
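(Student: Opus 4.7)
The four assertions are classical-flavoured properties of Musielak--Orlicz growth functions, each extracted from one of the structural hypotheses in Definition~\ref{d1.2}. My plan is to prove (i) and (ii) by elementary scaling arguments using the uniform upper type~$1$ and lower type~$p$ conditions, and to obtain (iii) and (iv) by the parametrised versions of the classical Muckenhoupt $A_p$-weight theory on spaces of homogeneous type, with the uniformity in $t$ coming directly from Definition~\ref{d1.1}.

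For (i), the inequality is trivial if $\sum_j t_j$ or $\sum_j \fai(x,t_j)$ is infinite, so I may assume $T:=\sum_{j=1}^\fz t_j\in(0,\fz)$. Since $T/t_j\in[1,\fz)$ for each $j$, uniform upper type~$1$ gives $\fai(x,T)=\fai(x,(T/t_j)t_j)\le C(T/t_j)\fai(x,t_j)$; multiplying by $t_j$, summing in $j$, and dividing by $T$ yields (i). For (ii), the upper bound follows from uniform lower type~$p$: for $s\in(0,t]$, we have $\fai(x,s)\le C(s/t)^p\fai(x,t)$, so $\wz\fai(x,t)\le C\fai(x,t)t^{-p}\int_0^t s^{p-1}\,ds=(C/p)\fai(x,t)$. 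For the reverse bound, I restrict the integration to $[t/2,t]$ and use the monotonicity of $\fai(x,\cdot)$ to get $\wz\fai(x,t)\ge(\ln 2)\fai(x,t/2)$; uniform upper type~$1$ at scale~$2$ then gives $\fai(x,t)\le 2C\fai(x,t/2)$, completing the two-sided comparison.

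For (iii) and (iv), the uniformly Muckenhoupt condition $\fai\in\aa_p(\cx)$ asserts precisely that, for each fixed $t\in(0,\fz)$, the weight $x\mapsto\fai(x,t)$ is a Muckenhoupt $A_p$ weight on $(\cx,d,\mu)$ with $A_p$-constant at most $\aa_p(\fai)$ \emph{independently of} $t$. Hence the classical weighted $L^p$ boundedness of the Hardy--Littlewood maximal operator $\cm$ on spaces of homogeneous type (see, e.g., \cite{gra1}) applies scale by scale, with the resulting constant depending only on $p$ and $\aa_p(\fai)$; this proves (iii). For (iv) with $p>1$, a standard H\"older-inequality manipulation of the $A_p$ condition on $B_2$ after restricting the auxiliary integral to $B_1$ yields $\fai(B_2,t)/\fai(B_1,t)\le\aa_p(\fai)\,[V(B_2)/V(B_1)]^p$; the case $p=1$ follows directly from the pointwise bound $\cm(\fai(\cdot,t))\le\aa_1(\fai)\fai(\cdot,t)$ almost everywhere.

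\textbf{Main obstacle.} Each of the four items is individually routine; the only substantive point is to verify that the uniformity in $t$ built into Definition~\ref{d1.1} propagates cleanly through each step, so that the constants in the conclusions depend on $\fai$ only through its type indices and the uniform $A_p$-constant $\aa_p(\fai)$. Thus the proof reduces to a careful, scale-parametrised rewriting of the scalar $A_p$ theory, with (i) and (ii) being elementary consequences of the type conditions.
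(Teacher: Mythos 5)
Your proofs are correct, and they are exactly the standard arguments: (i) and (ii) by the scaling consequences of the uniform upper type $1$ and lower type $p$ conditions, and (iii)–(iv) by applying the classical $A_p$ theory to the weights $\fai(\cdot,t)$ with constants uniform in $t$. The paper itself gives no proof of this lemma but simply cites \cite[Lemma 4.1]{k14} and \cite[Lemma 2.8]{yys4}, where the same arguments appear, so your proposal fills in precisely the intended reasoning.
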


Let $\cx$  be a space of homogeneous type and $F$ a $\mu$-measurable function on $\cx\times(0,\fz)$.
For any $\az\in(0,\fz)$ and $x\in\cx$, let
\begin{equation}\label{2.1}
F^\ast_\az(x):=\sup_{d(x,y)<\az t,\,t\in(0,\fz)}|F(y,t)|.
\end{equation}

Then we have the following conclusion.

\begin{lemma}\label{l2.3}
Let $F$ be a $\mu$-measurable function on $\cx\times(0,\fz)$ and
$\fai$ as in Definition \ref{d1.2}. Assume further that $\fai\in\aa_p(\cx)$ with $p\in(1,\fz)$
and $\fai$ is of uniformly lower type $\wz{p}$ with $\wz{p}\in(0,1]$.
Then there exists a positive  constant $C$, depending on $n$, $\fai$ and $p$, such that,
for any $\az_1,\,\az_2\in(0,\fz)$ with $\az_2\le\az_1$,
\begin{equation}\label{2.2}
\lf\|F^\ast_{\az_1}\r\|_{L^\fai(\cx)}\le C\lf[\frac{\az_1}{\az_2}\r]^{\frac{np}{\wz{p}}}
\lf\|F^\ast_{\az_2}\r\|_{L^\fai(\cx)},
\end{equation}
where $F^\ast_{\az_1}$ and $F^\ast_{\az_2}$ are as in \eqref{2.1}, respectively, with
$\az:=\az_1$ and $\az:=\az_2$.
\end{lemma}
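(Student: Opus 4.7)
The plan is to combine a standard level-set comparison between the two non-tangential averages with the weighted boundedness of the Hardy-Littlewood maximal operator $\cm$ (Lemma \ref{l2.2}(iii)), and then to convert the resulting modular inequality into the Luxemburg-Nakano norm inequality \eqref{2.2} via a layer-cake representation and the uniform lower type $\wz p$ of $\fai$.

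First, I would establish, for every $\lz\in(0,\fz)$, the pointwise inclusion $\{F^\ast_{\az_1}>\lz\}\subset\{x\in\cx:\ \cm(\mathbf{1}_{\{F^\ast_{\az_2}>\lz\}})(x)\ge c(\az_2/\az_1)^n\}$ with $c$ independent of $\lz,\az_1,\az_2$. Indeed, if $F^\ast_{\az_1}(x)>\lz$, then there exist $y\in\cx$ and $t\in(0,\fz)$ with $d(x,y)<\az_1 t$ and $|F(y,t)|>\lz$; for any $z\in B(y,\az_2 t)$ we have $d(z,y)<\az_2 t$, hence $F^\ast_{\az_2}(z)>\lz$, so $B(y,\az_2 t)\subset\{F^\ast_{\az_2}>\lz\}$. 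Since $\az_2\le\az_1$, $B(y,\az_2 t)\subset B(x,2\az_1 t)\subset B(y,3\az_1 t)$, and \eqref{1.2} gives $V(x,2\az_1 t)\le V(y,3\az_1 t)\ls(\az_1/\az_2)^n V(y,\az_2 t)$; averaging $\mathbf{1}_{\{F^\ast_{\az_2}>\lz\}}$ over $B(x,2\az_1 t)$ proves the claim. A Chebyshev step then upgrades this to
$$\mathbf{1}_{\{F^\ast_{\az_1}>\lz\}}(x)\ls(\az_1/\az_2)^{np}\lf[\cm\lf(\mathbf{1}_{\{F^\ast_{\az_2}>\lz\}}\r)(x)\r]^p.$$

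Integrating this pointwise bound against $\fai(x,t)\,d\mu(x)$ for each fixed $t\in(0,\fz)$ and invoking Lemma \ref{l2.2}(iii) (which applies because $\fai(\cdot,t)\in\aa_p(\cx)$ uniformly in $t$) yields the uniform level-set estimate $\fai(\{F^\ast_{\az_1}>\lz\},t)\ls(\az_1/\az_2)^{np}\,\fai(\{F^\ast_{\az_2}>\lz\},t)$ for every $\lz,t\in(0,\fz)$. Combining Lemma \ref{l2.2}(ii) with Fubini's theorem gives the layer-cake identity
$$\int_\cx \wz\fai(x,|f(x)|)\,d\mu(x)=\int_0^\fz\frac{\fai(\{|f|>s\},s)}{s}\,ds,$$
which, applied to $f:=F^\ast_{\az_1}/\lz_0$ and $f:=F^\ast_{\az_2}/\lz_0$ for arbitrary $\lz_0\in(0,\fz)$ and combined with $\wz\fai\sim\fai$, produces the modular inequality
$$\int_\cx \fai\lf(x,\frac{F^\ast_{\az_1}(x)}{\lz_0}\r)\,d\mu(x)\ls(\az_1/\az_2)^{np}\int_\cx \fai\lf(x,\frac{F^\ast_{\az_2}(x)}{\lz_0}\r)\,d\mu(x).$$

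Finally, choosing $\lz_0:=\|F^\ast_{\az_2}\|_{L^\fai(\cx)}$ makes the right-hand integrand at most $1$, so $\int_\cx\fai(x,F^\ast_{\az_1}/\lz_0)\,d\mu(x)\le C(\az_1/\az_2)^{np}$. Applying the uniform lower type $\wz p\in(0,1]$ of $\fai$ with scale $1/A$, $A\ge1$, gives $\int_\cx\fai(x,F^\ast_{\az_1}/(A\lz_0))\,d\mu(x)\le C A^{-\wz p}(\az_1/\az_2)^{np}$, and choosing $A:=[C(\az_1/\az_2)^{np}]^{1/\wz p}\gtrsim(\az_1/\az_2)^{np/\wz p}$ (which is $\ge1$ since $\az_2\le\az_1$) forces the right-hand side below $1$. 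By the definition of the Luxemburg-Nakano norm this yields $\|F^\ast_{\az_1}\|_{L^\fai(\cx)}\le A\lz_0\ls(\az_1/\az_2)^{np/\wz p}\|F^\ast_{\az_2}\|_{L^\fai(\cx)}$, which is \eqref{2.2}. The main obstacle is precisely this last conversion, since in Musielak-Orlicz spaces the modular is not homogeneous, so one cannot simply divide through by the constant $C(\az_1/\az_2)^{np}$; it is the lower-type mechanism that transforms the exponent $np$ (coming from the weighted $L^p$-boundedness of $\cm$) into the sharper-looking but weaker exponent $np/\wz p$ appearing in \eqref{2.2}. The geometric Step 1 is routine via doubling, and Step 2 is a direct consequence of Lemma \ref{l2.2}(ii) and (iii).
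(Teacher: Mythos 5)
Your proposal is correct and follows essentially the same route as the paper: the level-set inclusion into $\{\cm(\mathbf{1}_{\{F^\ast_{\az_2}>\lz\}})\gtrsim(\az_2/\az_1)^{n}\}$ (which the paper simply cites from \cite[(2.8)]{sy18} but you prove directly via doubling), then Lemma \ref{l2.2}(iii) to get the uniform level-set estimate, Lemma \ref{l2.2}(ii) plus Fubini for the modular inequality, and finally the uniform lower type $\wz p$ to convert the modular bound into the Luxemburg-norm bound \eqref{2.2}. Your last step is in fact slightly more careful than the paper's, since you explicitly absorb the implicit constant into the scaling factor $A$ via the lower-type property.
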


\begin{proof}
For any $\lz\in(0,\fz)$, let $E_\lz:=\{x\in\cx:\ F^\ast_{\az_2}(x)>\lz\}$.
Then it was proved in \cite[(2.8)]{sy18} that
there exists a positive constant $\wz{C}$ such that, for any $\lz\in(0,\fz)$,
\begin{equation}\label{2.3}
\lf\{x\in\cx:\ F^\ast_{\az_1}(x)>\lz\r\}\subset E_{\lz}^{\ast}:=\lf\{x\in\cx:\
\cm(\mathbf{1}_{E_\lz})(x)>\frac{\wz{C}}{(\az_1/\az_2)^n}\r\}.
\end{equation}
Moreover, from Lemma \ref{l2.2}(iii), it follows that, for any $t\in(0,\fz)$ and $\lz\in(0,\fz)$,
$$\int_{ E_{\lz}^{\ast}}\fai(x,t)\,d\mu(x)\ls\frac{[\az_1/\az_2]^{np}}{\wz{C}^p}
\int_{E_\lz}\fai(x,t)\,d\mu(x),
$$
which, together with Lemma \ref{l2.2}(ii), the Fubini theorem and \eqref{2.3}, implies that
\begin{align*}
\int_\cx\fai\lf(x,F^\ast_{\az_1}(x)\r)\,d\mu(x)&\sim\int_\cx\int_0^{F^\ast_{\az_1}(x)}
\frac{\fai(x,t)}{t}\,dt\,d\mu(x)\sim\int_0^\fz\int_{\{x\in\cx:\
F^\ast_{\az_1}(x)>t\}}\frac{\fai(x,t)}{t}\,d\mu(x)\,dt\\
&\ls\int_0^\fz
\int_{E^\ast_t}\frac{\fai(x,t)}{t}\,d\mu(x)\,dt
\ls\lf[\frac{\az_1}{\az_2}\r]^{np}\int_0^\fz
\int_{E_t}\frac{\fai(x,t)}{t}\,d\mu(x)\,dt\\
&\sim\lf[\frac{\az_1}{\az_2}\r]^{np}\int_\cx\fai\lf(x,F^\ast_{\az_2}(x)\r)\,d\mu(x).
\end{align*}
By this and the facts that $\fai$ is of uniformly lower type $\wz{p}$ and $\az_1\ge\az_2$,
we conclude that, for any $\lz\in(0,\fz)$,
\begin{align*}
\int_\cx\fai\lf(x,\frac{F^\ast_{\az_1}(x)}{[\az_1/\az_2]^{np/\wz{p}}\lz}\r)\,d\mu(x)
\ls\lf[\frac{\az_1}{\az_2}\r]^{-np}\int_\cx\fai\lf(x,\frac{F^\ast_{\az_1}(x)}{\lz}\r)\,d\mu(x)
\ls\int_\cx\fai\lf(x,\frac{F^\ast_{\az_2}(x)}{\lz}\r)\,d\mu(x),
\end{align*}
which further implies that
$$\lf\|F^\ast_{\az_1}\r\|_{L^\fai(\cx)}\ls\lf[\frac{\az_1}{\az_2}\r]^{\frac{np}{\wz{p}}}
\lf\|F^\ast_{\az_2}\r\|_{L^\fai(\cx)}.$$
This finishes the proof of Lemma \ref{l2.3}.
\end{proof}

Moreover, to show Theorem \ref{t1.1}, we need to establish the following conclusion.

\begin{proposition}\label{p2.1}
Assume that $\cx$  is a space of homogeneous type.
Let $L$ satisfy Assumptions \ref{a1} and \ref{a2}, and $\fai$ be as in Definition \ref{d1.2}.
Assume that $\psi_1,\,\psi_2\in\cs(\rr)$ are even functions with $\psi_1(0)=\psi_2(0)=1$,
and $\az_1,\,\az_2\in(0,\fz)$. For any $i\in\{0,\,1\}$,
$f\in L^2(\cx)$ and $x\in\cx$, let
$$(\psi_i)^{\ast}_{L,\,\az_i}(f)(x):=\sup_{d(x,y)<\az_i t,\,t\in(0,\fz)}\lf|\psi_i(t\sqrt{L})(f)(y)\r|.$$
Then there exists a positive constant $C$, depending on $n$, $\fai$, $\psi_1$, $\psi_2$,
$\az_1$ and $\az_2$, such that, for any $f\in L^2(\cx)$,
\begin{align}\label{2.4}
\lf\|(\psi_1)^{\ast}_{L,\,\az_1}(f)\r\|_{L^\fai(\cx)}\le C\lf\|(\psi_2)^{\ast}_{L,\,\az_2}(f)\r\|_{L^\fai(\cx)}.
\end{align}
Specially, for any given even function $\phi\in\cs(\rr)$
with $\phi(0)=1$ and $\az\in(0,\fz)$,
there exists a positive constant $C$, depending on $n$, $\fai$, $\phi$ and $\az$, such that, for any $f\in L^2(\cx)$,
\begin{align*}
C^{-1}\|f^\ast_L\|_{L^\fai(\cx)}\le\lf\|\phi^\ast_{L,\,\az}(f)\r\|_{L^\fai(\cx)}\le C\|f^\ast_L\|_{L^\fai(\cx)}.
\end{align*}
\end{proposition}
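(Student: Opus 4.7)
The plan is to establish \eqref{2.4} in three steps: (i) reduce via Lemma \ref{l2.3} to the case of a common aperture; (ii) prove a pointwise domination of $(\psi_1)^{\ast}_{L,\,\az}(f)$ by a power-average Hardy--Littlewood maximal function of $(\psi_2)^{\ast}_{L,\,\az'}(f)$ for a slightly enlarged aperture $\az'$; and (iii) convert this pointwise bound into the $L^\fai$-quasi-norm bound via a judicious choice of the power exponent that enables Lemma \ref{l2.2}(iii).

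For Step (i), using \eqref{1.6} and \eqref{1.5}, I first select $p_0\in (q(\fai),\fz)$ such that $\fai\in\aa_{p_0}(\cx)$, and $r\in (0, i(\fai)/p_0)$ such that $\fai$ is of uniformly lower type $rp_0$. By Lemma \ref{l2.3}, it then suffices to prove
$$\lf\|(\psi_1)^{\ast}_{L,\,\az}(f)\r\|_{L^\fai(\cx)}\ls \lf\|(\psi_2)^{\ast}_{L,\,\az}(f)\r\|_{L^\fai(\cx)}$$
for a single common aperture $\az\in (0,\fz)$, since the aperture-change loss is absorbed into the constant in \eqref{2.4}.

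Step (ii) is the central ingredient. I aim to establish
\begin{equation}\label{eqppp}
(\psi_1)^{\ast}_{L,\,\az}(f)(x)\ls\lf[\cm\lf(\lf|(\psi_2)^{\ast}_{L,\,2\az}(f)\r|^r\r)(x)\r]^{1/r}
\end{equation}
for any $x\in\cx$. To prove \eqref{eqppp}, I would approximate $\psi_1$ and $\psi_2$ in $\cs(\rr)$ by even Schwartz functions whose Fourier transforms lie in $C_c^\fz(\rr)$, so that, by Lemma \ref{l2.1}, the corresponding operator kernels are supported in $\{(y,z)\in\cx\times\cx:\ d(y,z)\le t\}$ with pointwise bound $C/\mu(B(y,t))$. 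A Calder\'on-type spectral reproducing identity---available because $\psi_1(0)=\psi_2(0)=1$ and $L$ is non-negative self-adjoint---then enables writing $\psi_1(t\sqrt{L})$ as an integral $\int_0^\fz\eta(t,s)\psi_2(s\sqrt{L})\,\frac{ds}{s}$ in $L^2(\cx)$, where the scalar kernel $\eta$ decays rapidly in both $s/t$ and $t/s$. Combining this identity with the Gaussian bound \eqref{1.3}, splitting the spatial integrals over annular regions $S_j(B(x,2^j t))$, and summing a geometric series in $j$ yields \eqref{eqppp}.

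Finally, for Step (iii), I raise both sides of \eqref{eqppp} to the $r$-th power, invoke the uniformly lower type $rp_0$ property of $\fai$, and apply Lemma \ref{l2.2}(iii) with exponent $p_0$ to the weight $\fai(\cdot,t)$ (with $t$ playing the role of the scaling parameter inherent in the Luxemburg quasi-norm) to conclude
$$\lf\|(\psi_1)^{\ast}_{L,\,\az}(f)\r\|_{L^\fai(\cx)}\ls \lf\|(\psi_2)^{\ast}_{L,\,2\az}(f)\r\|_{L^\fai(\cx)}\ls \lf\|(\psi_2)^{\ast}_{L,\,\az_2}(f)\r\|_{L^\fai(\cx)},$$
where the last inequality again uses Lemma \ref{l2.3}. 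The second assertion of Proposition \ref{p2.1}, comparing $\phi^{\ast}_{L,\,\az}(f)$ to $f^{\ast}_L$, follows by applying \eqref{2.4} with $\psi_1:=\phi$, $\psi_2(\xz):=e^{-\xz^2}$, $\az_1:=\az$, $\az_2:=1$, and then reversing the roles of $\psi_1$ and $\psi_2$. The main obstacle will be the construction and joint-decay analysis of the subordination kernel $\eta$ used in the proof of \eqref{eqppp}: one needs enough decay in both $s/t$ and $t/s$ so that the resulting spatial integrals, combined with the Gaussian bound \eqref{1.3} and the kernel support property from Lemma \ref{l2.1}, collapse into a Hardy--Littlewood maximal-function average amenable to Lemma \ref{l2.2}(iii).
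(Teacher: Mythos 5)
There is a genuine gap at the heart of your Step (ii). You propose to write $\psi_1(t\sqrt{L})=\int_0^\fz\eta(t,s)\,\psi_2(s\sqrt{L})\,\frac{ds}{s}$ (via a Calder\'on reproducing identity of the form $\psi_1(t\sqrt{L})=C\int_0^\fz\psi_1(t\sqrt{L})\Psi(s\sqrt{L})\psi_2(s\sqrt{L})\,\frac{ds}{s}$ with $\Psi(x)=x^{2k}\Phi(x)$) and you assert that the subordination kernel ``decays rapidly in both $s/t$ and $t/s$.'' This is false as stated: since $\psi_1(0)=1$, in the regime $s\gg t$ the operator $\psi_1(t\sqrt{L})\Psi(s\sqrt{L})$ is essentially $\Psi(s\sqrt{L})$ (the spectral mass of $\Psi(s\cdot)$ sits near the origin, where $\psi_1(t\cdot)\approx 1$), so there is no gain as $s/t\to\fz$ and the integral $\int_0^\fz\cdots\frac{ds}{s}$ does not converge absolutely with the uniform bounds you need. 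The decay as $s/t\to 0$ does come for free from the high-order zero of $\Psi$ at the origin, but the other regime requires a vanishing moment of the \emph{outer} function. The paper's remedy is to set $\psi:=\psi_1-\psi_2$, which satisfies $\psi(0)=0$, prove the estimate for $\psi^\ast_{L,\,1}(f)$ (inequality \eqref{2.5}), and recover $\psi_1$ by the triangle inequality $(\psi_1)^\ast_{L,\,1}(f)\le\psi^\ast_{L,\,1}(f)+(\psi_2)^\ast_{L,\,1}(f)$; see Remark \ref{r2.1}. You flag the joint-decay analysis as ``the main obstacle,'' but without the subtraction (or an equivalent vanishing-moment device) the obstacle is not surmountable, so this step must be repaired before the rest of the argument can proceed.

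A secondary, non-fatal difference: after the pointwise bound, the paper does not pass through $[\cm(|\cdot|^r)]^{1/r}$. Instead it bounds $\psi^\ast_{L,\,1}(f)(x)$ by the Peetre-type maximal function $\sup_{z,\,s}|\psi_2(s\sqrt{L})(f)(z)|[1+d(x,z)/s]^{-\lz}$, expands $(1+u)^{-\lz}\le\sum_{k}2^{-k}\mathbf{1}_{[0,1]}(u/2^{k/\lz})$ to write this as $\sum_k 2^{-k}(\psi_2)^\ast_{L,\,2^{k/\lz}}(f)(x)$, and sums the series using Lemma \ref{l2.2}(i) and the aperture-change estimate \eqref{2.2}, choosing $\lz>nq(\fai)/i(\fai)$ so that the geometric gain $2^{-kp_0}$ beats the aperture loss $2^{kn\wz q/\lz}$. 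Your route via the Hardy--Littlewood maximal operator and Lemma \ref{l2.2}(iii) is the one the paper reserves for comparing the non-tangential and radial maximal functions in Theorem \ref{t2.1}; it could in principle be made to work here too, but your Step (iii) would then need the full layer-cake argument of that proof rather than a direct application of Lemma \ref{l2.2}(iii), since the lower-type exponent and the $\aa_{q_0}$ exponent must be balanced ($\tz_0 q_0<p_0$) inside a good-$\lambda$/distribution-function computation.
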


\begin{proof}
By the fact that $\fai$ is as in Definition \ref{d1.2},
we know that there exist $p\in(1,\fz)$ and $\wz{p}\in(0,1]$ such that $\fai\in\aa_p(\cx)$
and $\fai$ is of uniformly lower type $\wz{p}\in(0,1]$.
Let $\psi:=\psi_1-\psi_2$. Via \eqref{2.2}, to prove \eqref{2.4}, it suffices to show that
\begin{align}\label{2.5}
\lf\|\psi^{\ast}_{L,\,1}(f)\r\|_{L^\fai(\cx)}\ls\lf\|(\psi_2)^{\ast}_{L,\,1}(f)\r\|_{L^\fai(\cx)},
\end{align}
where the implicit positive constant depends on $n$, $\psi_1$, $\psi_2$ and $\fai$.
Now we show \eqref{2.5}.
Let $\Psi(x):=x^{2k}\Phi(x)$ for any $x\in\rr$, where  $k\in\nn$ with $k>nq(\fai)/[2i(\fai)]$
and $\Phi$ is as in Lemma \ref{l2.1}.
From the spectral calculus, we deduce that there exists a constant $C_{(\Psi,\,\psi_2)}$,
depending on $\Psi$ and $\psi_2$, such that
$$f=C_{(\Psi,\,\psi_2)}\int_0^\fz\Psi(s\sqrt{L})\psi_2(s\sqrt{L})(f)\,\frac{ds}{s},
$$
which further implies that, for any $t\in(0,\fz)$,
\begin{align*}
\psi(t\sqrt{L})(f)=C_{(\Psi,\,\psi_2)}\int_0^\fz\lf[\psi(t\sqrt{L})
\Psi(s\sqrt{L})\r]\psi_2(s\sqrt{L})(f)\,\frac{ds}{s}.
\end{align*}
Let $K_{\psi(t\sqrt{L})\Psi(s\sqrt{L})}$ be the kernel of $\psi(t\sqrt{L})\Psi(s\sqrt{L})$.
Then, for any $\lz\in(0,\fz)$ and $x\in\cx$, we have
\begin{align}\label{2.6}
&\sup_{d(x,y)<t,\,t\in(0,\fz)}\lf|\psi(t\sqrt{L})(f)(y)\r|\\ \nonumber
&\hs\sim\sup_{d(x,y)<t,\,t\in(0,\fz)}\lf|\int_0^\fz
K_{\psi(t\sqrt{L})\Psi(s\sqrt{L})}(y,z)\psi_2(s\sqrt{L})(f)(z)\frac{d\mu(z)\,ds}{s}\r|\\ \nonumber
&\hs\ls\sup_{d(x,y)<t,\,t\in(0,\fz)}\int_{\cx\times(0,\fz)}
\lf|K_{\psi(t\sqrt{L})\Psi(s\sqrt{L})}(y,z)\r|\lf[1+\frac{d(x,z)}{s}\r]^{\lz}\\ \nonumber
&\hs\hs\times\lf|\psi_2(s\sqrt{L})(f)(z)\r|\lf[1+\frac{d(x,z)}{s}\r]^{-\lz}\frac{d\mu(z)\,ds}{s}\\ \nonumber
&\hs\ls\sup_{z\in\cx,\,s\in(0,\fz)}\lf|\psi_2(s\sqrt{L})(f)(z)\r|\lf[1+\frac{d(x,z)}{s}\r]^{-\lz}\\ \nonumber
&\hs\hs\times\sup_{d(x,y)<t,\,t\in(0,\fz)}\int_{\cx\times(0,\fz)}\lf|K_{\psi(t\sqrt{L})\Psi(s\sqrt{L})}(y,z)\r|
\lf[1+\frac{d(x,z)}{s}\r]^{\lz}\frac{d\mu(z)\,ds}{s}.
\end{align}
Moreover, from the proof of \cite[Proposition 2.4]{sy18}, it follows that, for any $\lz\in(0,2k)$ and $x\in\cx$,
$$\sup_{d(x,y)<t,\,t\in(0,\fz)}\int_{\cx\times(0,\fz)}\lf|K_{\psi(t\sqrt{L})
\Psi(s\sqrt{L})}(y,z)\r|\lf[1+\frac{d(x,z)}{s}\r]^{\lz}\frac{d\mu(z)\,ds}{s}\ls1,
$$
where the implicit positive constant depends on $n$, $\Psi$, $\psi$ and $\lz$,
which, together with \eqref{2.6}, implies that
\begin{align}\label{2.7}
\sup_{d(x,y)<t,\,t\in(0,\fz)}\lf|\psi(t\sqrt{L})(f)(y)\r|\ls\sup_{z\in\cx,\,s\in(0,\fz)}\lf|\psi_2(s\sqrt{L})
(f)(z)\r|\lf[1+\frac{d(x,z)}{s}\r]^{-\lz},
\end{align}
where the implicit positive constant depends on $n$, $\Psi$, $\psi$ and $\lz$.
Furthermore, let $\mathbf{1}_{[0,1]}$ be the characteristic function of $[0,1]$. Then, for any $\lz,\,s\in(0,\fz)$,
we have
\begin{align*}
(1+s)^{-\lz}\le\sum_{k=1}^\fz2^{-k}\mathbf{1}_{[0,1]}\lf(\frac{s}{2^{k/\lz}}\r),
\end{align*}
which further implies that
\begin{align}\label{2.8}
&\sup_{z\in\cx,\,s\in(0,\fz)}\lf|\psi_2(s\sqrt{L})(f)(z)\r|\lf[1+\frac{d(x,z)}{s}\r]^{-\lz}\\ \nonumber
&\hs\le\sum_{k=1}^\fz2^{-k}\sup_{z\in\cx,\,s\in(0,\fz)}\lf|\psi_2(s\sqrt{L})(f)(z)\r|
\mathbf{1}_{[0,1]}\lf(\frac{d(x,z)}{s2^{k/\lz}}\r)\\ \nonumber
&\hs=\sum_{k=1}^\fz2^{-k}\sup_{d(x,z)<2^{k/\lz}s,\,s\in(0,\fz)}\lf|\psi_2(s\sqrt{L})(f)(z)\r|
=\sum_{k=1}^\fz2^{-k}(\psi_2)^\ast_{L,\,2^{k/\lz}}(f)(x).
\end{align}
Let $\lz\in(nq(\fai)/i(\fai),2k)$. Then, by $\lz>nq(\fai)/i(\fai)$ and the definitions of
$q(\fai)$ and $i(\fai)$, we find that there exist $p_0\in(0,i(\fai))$
and $\wz{q}\in(q(\fai),\fz)$ such that $\lz>n\wz{q}/p_0$, $\fai$ is of
uniformly lower type $p_0$ and $\fai\in\aa_{\wz{q}}(\cx)$,
which, combined with \eqref{2.8}, Lemma \ref{l2.2}(i) and \eqref{2.2},
implies that, for any $t\in(0,\fz)$,
\begin{align*}
&\int_\cx\fai\lf(x,\sup_{z\in\cx,\,s\in(0,\fz)}
\lf|\psi_2(s\sqrt{L})(f)(z)\r|\lf[1+\frac{d(x,z)}{s}\r]^{-\lz}/t\r)\,d\mu(x)\\
&\hs\le\int_\cx\fai\lf(x,\sum_{k=1}^\fz2^{-k}(\psi_2)^\ast_{L,\,2^{k/\lz}}(f)(x)/t\r)\,d\mu(x)
\ls\sum_{k=1}^\fz2^{-kp_0}\int_\cx\fai\lf(x,(\psi_2)^\ast_{L,\,2^{k/\lz}}(f)(x)/t\r)\,d\mu(x)\\
&\hs\ls\sum_{k=1}^\fz2^{-k[p_0-n\wz{q}/\lz]}\int_\cx\fai\lf(x,(\psi_2)^\ast_{L}(f)(x)/t\r)\,d\mu(x)
\sim\int_\cx\fai\lf(x,(\psi_2)^\ast_{L}(f)(x)/t\r)\,d\mu(x),
\end{align*}
where the implicit positive constants depend on $n$, $\psi$ and $\fai$.
From this, we deduce that
$$\lf\|\sup_{z\in\cx,\,s\in(0,\fz)}\lf|\psi_2(s\sqrt{L})(f)(z)\r|
\lf[1+\frac{d(\cdot,z)}{s}\r]^{-\lz}\r\|_{L^\fai(\cx)}
\ls\lf\|(\psi_2)^\ast_{L}(f)\r\|_{L^\fai(\cx)},$$
which, together with \eqref{2.7}, further implies that
\begin{align*}
\lf\|\psi^\ast_{L,\,1}(f)\r\|_{L^\fai(\cx)}&=\lf\|\sup_{d(\cdot,y)<t,\,t\in(0,\fz)}
\lf|\psi(t\sqrt{L})(f)(y)\r|\r\|_{L^\fai(\cx)}\\ \nonumber
&\ls\lf\|\sup_{z\in\cx,\,s\in(0,\fz)}\lf|\psi_2(s\sqrt{L})(f)(z)\r|
\lf[1+\frac{d(\cdot,z)}{s}\r]^{-\lz}\r\|_{L^\fai(\cx)}
\ls\lf\|(\psi_2)^\ast_{L}(f)\r\|_{L^\fai(\cx)},
\end{align*}
where the implicit positive constants depend on $n$, $\psi$, $\Psi$, $\lz$ and $\fai$.
This finishes the proof of \eqref{2.5} and hence of Proposition \ref{p2.1}.
\end{proof}

\begin{remark}\label{r2.1}
In the proof of Proposition \ref{p2.1}, to show \eqref{2.4}, we
introduce the new function $\psi$ and then convert \eqref{2.4} into \eqref{2.5}.
This is because of $\psi(0)=0$ and $\Psi(0)=0$, which imply that, for any $t,\,s\in(0,\fz)$, $K_{\psi(t\sqrt{L})\Psi(s\sqrt{L})}$ has a better decay
estimate than $K_{\psi_1(t\sqrt{L})\Psi(s\sqrt{L})}$ itself,
where $K_{\psi(t\sqrt{L})\Psi(s\sqrt{L})}$ and $K_{\psi_1(t\sqrt{L})\Psi(s\sqrt{L})}$ denote, respectively, the kernels of $\psi(t\sqrt{L})\Psi(s\sqrt{L})$ and $\psi_1(t\sqrt{L})\Psi(s\sqrt{L})$ (see \cite[Lemma 2.2(ii)]{sy18} for the details).
By this, we know that it is easier to estimate
\eqref{2.5} than \eqref{2.4} itself. These ideas originate from the proof of \cite[Proposition 3.1]{sy16} (see also the proof of \cite[Proposition 2.4]{sy18}).
\end{remark}

Moreover, to prove Theorem \ref{t1.1}, we also need the following atomic characterization of the
Musielak-Orlicz-Hardy space $H_{\fai,\,L}(\cx)$.

\begin{proposition}\label{p2.2}
Assume that $\cx$  is a space of homogeneous type.
Let $L$ satisfy Assumptions \ref{a1} and \ref{a2} and $\fai$ be as in Definition \ref{d1.2}.
Assume that $M\in\nn\cap(\frac{nq(\fai)}{2i(\fai)},\fz)$ and
$q\in([r(\fai)]'I(\fai),\fz)\cap(1,\fz)$,
where $q(\fai)$, $i(\fai)$, $r(\fai)$ and
$I(\fai)$ are, respectively, as in \eqref{1.6}, \eqref{1.5},
\eqref{1.7} and \eqref{1.4}.
Then the spaces $H_{\fai,\,L}(\cx)$ and
$H^{M,\,q}_{\fai,\,L,\,\mathrm{at}}(\cx)$ coincide with equivalent quasi-norms.
\end{proposition}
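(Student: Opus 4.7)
The plan is to establish the two inclusions $H_{\fai,\,L}(\cx)\subset H^{M,\,q}_{\fai,\,L,\,\mathrm{at}}(\cx)$ and $H^{M,\,q}_{\fai,\,L,\,\mathrm{at}}(\cx)\subset H_{\fai,\,L}(\cx)$ via the Calder\'on reproducing formula combined with the atomic decomposition of the tent space $T_\fai(\cx\times(0,\fz))$ recorded in Lemma \ref{l2.4}, following the overall scheme of \cite[Theorem 5.4]{bckyy13b} but treating the two cases $\mu(\cx)=\fz$ and $\mu(\cx)<\fz$ in a unified manner.

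For the easy inclusion $H^{M,\,q}_{\fai,\,L,\,\mathrm{at}}(\cx)\subset H_{\fai,\,L}(\cx)$, I would first estimate $\|S_L(\az)\|_{L^\fai(\cx)}$ for a single $(\fai,\,q,\,M)_L$-atom $\az=L^M b$ associated with a ball $B$, splitting $\int_\cx\fai(x,S_L(\az)(x))\,d\mu(x)$ into the local part over $4B$ and the annular parts over $S_j(B)$ for $j\ge2$. On the local part one uses the $L^q$-boundedness of $S_L$, H\"older's inequality and $\fai\in\rh_q(\cx)$, which is available because $q\in([r(\fai)]'I(\fai),\fz)$. On the annular parts one exploits the off-diagonal decay of $t^2 L e^{-t^2 L}$ provided by Assumption \ref{a2}, the cancellation $\az=L^M b$ and the size bound $\|(r_B^2 L)^j b\|_{L^q(\cx)}\le r_B^{2M}[V(B)]^{1/q}\|\mathbf{1}_B\|_{L^\fai(\cx)}^{-1}$. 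The hypothesis $M>nq(\fai)/[2i(\fai)]$ is precisely what forces the resulting geometric series over $j$ to converge after invoking Lemma \ref{l2.2}(iv); summing over all atoms via Lemma \ref{l2.2}(i) then yields $\|f\|_{H_{\fai,\,L}(\cx)}\ls\|f\|_{H^{M,\,q}_{\fai,\,L,\,\mathrm{at}}(\cx)}$.

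For the converse, I would start from $f\in\mathbb{H}_{\fai,\,L}(\cx)\cap L^2(\cx)$ and apply the $L^2$-convergent spectral reproducing formula
\[
f=c_M\int_0^\fz (t^2L)^{M+1}e^{-2t^2L}(f)\,\frac{dt}{t}.
\]
Setting $F(y,t):=(t^2L)e^{-t^2L}(f)(y)$, the area function $S_L(f)$ is exactly the tent-space norm of $F$, so $F\in T_\fai(\cx\times(0,\fz))$. Lemma \ref{l2.4} then produces a decomposition $F=\sum_{j=1}^\fz\lz_j A_j$ (together with a single term $\lz_0 A_0$ when $\mu(\cx)<\fz$), each $A_j$ being a $T_\fai$-atom supported in the tent $\widehat{B_j}$. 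Substituting back gives $f=\sum_j\lz_j\az_j$ with
\[
\az_j:=c_M\int_0^\fz (t^2L)^M e^{-t^2L}\bigl[A_j(\cdot,t)\bigr]\,\frac{dt}{t}=L^M b_j, \qquad b_j:=c_M\int_0^\fz t^{2M}e^{-t^2L}\bigl[A_j(\cdot,t)\bigr]\,\frac{dt}{t}.
\]
Verifying that each $\az_j$ is a fixed constant multiple of a $(\fai,\,q,\,M)_L$-atom associated with a controlled dilate of $B_j$ then amounts to combining: the support control on $L^k b_j$ coming from the tent support of $A_j$ and the Gaussian kernel bounds of Assumption \ref{a2}; a duality-with-$L^{q'}(\cx)$ argument, valid because $q\in([r(\fai)]'I(\fai),\fz)$, to convert the $L^2$-atomic bound on $A_j$ into the required $L^q$ bound on $(r_{B_j}^2L)^k b_j$; and the $L^2$-convergence of the series for $f$.

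The principal new obstacle, beyond what is handled in \cite[Theorem 5.4]{bckyy13b}, is the case $\mu(\cx)<\fz$. For those $k\in\zz$ with $O_k:=\{x\in\cx:\ \ca(f)(x)>2^k\}=\cx$, the tent $\widehat{O}_k$ degenerates to $\cx\times(0,\fz)$ and the Whitney covering of $O_k^\complement=\emptyset$ is vacuous, so the standard construction of tent atoms from $F\mathbf{1}_{\widehat{O}_k\setminus\widehat{O}_{k+1}}$ cannot produce a compactly supported piece at the top level and must be replaced by a single global term. This is precisely the role of the $(T_\fai,\,p)$-single-atom furnished by Lemma \ref{l2.4}; transporting it through the reproducing formula yields a $(\fai,\,q)$-single-atom contribution $\lz_0\az_0$ to the atomic representation \eqref{1.11} of $f$. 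The delicate technical point is to verify that the normalization of $\lz_0\az_0$ is compatible with the Luxemburg-Nakano functional $\blz(\{\lz_j\az_j\})$, i.e.\ that $\fai(\cx,|\lz_0|/\|\mathbf{1}_\cx\|_{L^\fai(\cx)})+\sum_{j\ge1}\fai(B_j,|\lz_j|/\|\mathbf{1}_{B_j}\|_{L^\fai(\cx)})\ls1$ after suitable scaling. Once this is checked, the remainder of the argument tracks the proof of \cite[Theorem 5.4]{bckyy13b} line by line, giving the claimed equivalence of quasi-norms.
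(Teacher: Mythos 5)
There is a genuine gap in your synthesis step. You transport the tent-space atoms $A_j$ back to functions on $\cx$ via the heat semigroup, setting $b_j:=c_M\int_0^\fz t^{2M}e^{-t^2L}[A_j(\cdot,t)]\,\frac{dt}{t}$, and you claim that the support condition on $L^k b_j$ follows from ``the tent support of $A_j$ and the Gaussian kernel bounds of Assumption \ref{a2}.'' This does not work: a Gaussian upper bound gives decay of $K_{t^2}(x,y)$ in $d(x,y)$ but no compact support, so $e^{-t^2L}[A_j(\cdot,t)]$ is in general nonzero on all of $\cx$, and the resulting $\az_j=L^Mb_j$ violates condition (ii) of Definition \ref{d1.4}(I), which demands $\supp(L^jb)\subset B$. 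With the heat semigroup you obtain at best \emph{molecules}, not atoms. The paper avoids this by using the synthesis operator $\pi_{\Phi,\,L,\,M}$ built from $(t^2L)^{M+1}\Phi(t\sqrt L)$, where $\Phi=\widehat\phi$ for an even $\phi\in C^\fz_c(\rr)$ with $\supp\phi\subset(-1,1)$; Lemma \ref{l2.1} (finite propagation speed) then gives $\supp K_{(t^2L)^k\Phi(t\sqrt L)}\subset\{d(x,y)\le t\}$, so the tent support of $A_j$ genuinely localizes $L^kb_j$ in a fixed dilate of $B_j$. Your argument needs this replacement to be valid; with $e^{-t^2L}$ in place the claimed atomic representation simply does not exist.

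Apart from this, your overall architecture matches the paper's: the case $\mu(\cx)=\fz$ is \cite[Theorem 5.4]{bckyy13b}, and the new content for $\mu(\cx)<\fz$ is exactly the single-atom term produced by Lemma \ref{l2.4} when some level set $O_k$ equals $\cx$. You correctly identify that the remaining work is to check that $\az_0:=\pi_{\Phi,\,L,\,M}(A_0)$ is a $(\fai,\,q)$-single-atom with the right normalization in the functional $\blz$; the paper does this by pairing $\az_0$ against $g\in L^{q'}(\cx)\cap L^2(\cx)$ and using the $L^{q'}(\cx)$-boundedness of the area function $S^{M+1,\,\Phi}_L$, which converts the $T^q_2$ bound on $A_0$ into the required $L^q$ bound on $\az_0$. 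Your sketch of the reverse inclusion (estimating $S_L(\az)$ atomwise, local part via reverse H\"older and $L^q$-boundedness, annuli via $M>nq(\fai)/[2i(\fai)]$ and Lemma \ref{l2.2}(iv)) is the standard argument and is consistent with what the paper defers to \cite{bckyy13b}.
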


We point out that, when $\mu(\cx)=\fz$,
Proposition \ref{p2.2} was established in \cite[Theorem 5.4]{bckyy13b}. To show Proposition \ref{p2.2}
in the case of $\mu(\cx)<\fz$, we first introduce the following notions.

For any given $\nu\in(0,\fz)$ and $x\in\cx$, let
$\Gamma_{\nu}(x):=\{(y,t)\in\cx\times(0,\fz):\ d(x,y)<\nu t\}$
be the \emph{cone of aperture $\nu$ with vertex $x\in\cx$}.
Moreover, for any closed subset $F$ of $\cx$, denote by $\mathcal{R}_{\nu}(F)$ the \emph{union of all
cones with vertices in $F$}, namely, $\mathcal{R}_{\nu}(F):=\cup_{x\in
F}\Gamma_{\nu}(x)$ and, for any open subset $O$ of $\cx$, denote the
\emph{tent over $O$} by $T_{\nu}(O)$, which is defined as
$T_{\nu}(O):=[\mathcal{R}_{\nu}(O^\complement)]^{\complement}$. It is
easy to see that
$$T_{\nu}(O)=\lf\{(x,t)\in\cx\times(0,\fz):\ d(x,O^\complement)\ge\nu t\r\}.$$
In what follows, we denote $\Gamma_1(x)$ and
$T_1(O)$ \emph{simply by $\Gamma(x)$ and $\widehat{O}$}, respectively.

For any measurable function $g$ on $\cx\times(0,\fz)$ and $x\in\cx$, let
\begin{align}\label{2.9}
\ca(g)(x):=\lf\{\int_{\bgz(x)}|g(y,t)|^2
\frac{d\mu(y)}{V(x,t)}\frac{dt}{t}\r\}^{1/2}.
\end{align}
Coifman et al. \cite{cms85} introduced the
tent space $T^p_2(\rr^{n+1}_+)$ for any $p\in(0,\fz)$,
here and hereafter, $\rr^{n+1}_+:=\rn\times(0,\fz)$. The tent space
$T^p_2(\cx\times(0,\fz))$ on spaces of homogenous type was introduced
by Russ \cite{r07}. Recall that a measurable function $g$ on $\cx\times(0,\fz)$ is said
to belong to the \emph{tent space} $T^p_2(\cx\times(0,\fz))$ with $p\in(0,\fz)$ if
$$\|g\|_{T^p_2(\cx\times(0,\fz))}:=\lf\|\ca(g)\r\|_{L^p(\cx)}<\fz.$$
Furthermore, Harboure et al. \cite{hsv07}, and Jiang and Yang
\cite{jy11}, respectively, introduced the Orlicz tent spaces
$T_{\bfai}(\rr^{n+1}_+)$ and $T_{\Phi}(\cx\times(0,\fz))$.
Moreover, the Musielak-Orlicz tent spaces $T_{\fai}(\rr^{n+1}_+)$ and $T_{\fai}(\cx\times(0,\fz))$
were introduced, respectively, in \cite{hyy} and \cite{bckyy13b,yys4}.

Let $\fai$ be as in Definition \ref{d1.2}. Recall that
the Musielak-Orlicz tent space $T_\fai(\cx\times(0,\fz))$ is defined to be the set of \emph{all measurable functions
$g$ on $\cx\times(0,\fz)$ such that $\ca(g)\in L^{\fai}(\cx)$} and,
for any $g\in T_\fai(\cx\times(0,\fz))$, the \emph{quasi-norm} $\|g\|_{T_\fai(\cx\times(0,\fz))}$
is defined by setting
$$\|g\|_{T_\fai(\cx\times(0,\fz))}:=\lf\|\ca(g)\r\|_{L^{\fai}(\cx)}=
\inf\lf\{\lz\in(0,\fz):\
\int_{\cx}\fai\lf(x,\frac{\ca(g)(x)}{\lz}\r)\,d\mu(x)\le1\r\}.
$$

Let $p\in(1,\,\fz)$. A function $A$ on $\cx\times(0,\fz)$ is called a
\emph{$(T_\fai,\,p)$-atom} if
\begin{itemize}
  \item [(i)] there exists a ball $B\subset\cx$ such that $\supp(a)\subset\widehat{B}$;
  \item [(ii)] $\|A\|_{T^p_2(\cx\times(0,\fz))}\le
[\mu(B)]^{1/p}\|\mathbf{1}_B\|_{L^\fai(\cx)}^{-1}$.
\end{itemize}

In particular, if $\mu(\cx)<\fz$, a function $A$ on $\cx\times(0,\fz)$
is called a \emph{$(T_\fai,\,p)$-single-atom} if
$$\|A\|_{T^p_2(\cx\times(0,\fz))}\le[\mu(\cx)]^{1/p}\|\mathbf{1}_\cx\|^{-1}_{L^\fai(\cx)}.$$

Moreover, if $A$ is a $(T_\fai,p)$-atom (or $(T_\fai,\,p)$-single-atom) for any $p\in (1,\fz)$,
we then call $A$ a \emph{$(T_\fai,\fz)$-atom} (or \emph{$(T_\fai,\,\fz)$-single-atom}).

For functions in $T_\fai(\cx\times(0,\fz))$, we have the following
atomic decomposition.

\begin{lemma}\label{l2.4}
Let $\fai$ be as in Definition \ref{d1.2}. Then, when $\mu(\cx)=\fz$, for any $f\in
T_\fai(\cx\times(0,\fz))$, there exist $\{\lz_j\}_{j=1}^\fz\subset\cc$ and a sequence
$\{A_j\}_{j=1}^\fz$ of $(T_\fai,\,\fz)$-atoms associated, respectively, with $\{B_j\}_{j=1}^\fz\subset\cx$ such that,
for almost every $(x,t)\in\cx\times(0,\fz)$,
\begin{equation*}
f(x,t)=\sum_{j=1}^\fz\lz_jA_j(x,t).
\end{equation*}
Moreover, there exists a positive constant $C$ such that, for any
$f\in T_\fai(\cx\times(0,\fz))$,
\begin{align*}
\blz\lf(\{\lz_j A_j\}_{j=1}^\fz\r):=\inf\lf\{\lz\in(0,\fz):\
\sum_{j=1}^\fz\fai\lf(B_j,\frac{|\lz_j|}
{\lz\|\mathbf{1}_{B_j}\|_{L^\fai(\cx)}}\r)\le1\r\}
\le C\|f\|_{T_\fai(\cx\times(0,\fz))}.\nonumber
\end{align*}

Furthermore, when $\mu(\cx)<\fz$, for any $f\in T_\fai(\cx\times(0,\fz))$,
there exist $\{\lz_0\}\cup\{\lz_j\}_{j=1}^\fz\subset\cc$,
a $(T_\fai,\,\fz)$-single-atom $A_0$ and a sequence
$\{A_j\}_{j=1}^\fz$ of $(T_\fai,\,\fz)$-atoms associated, respectively, with $\{B_j\}_{j=1}^\fz\subset\cx$
such that, for almost every $(x,t)\in\cx\times(0,\fz)$,
\begin{equation*}
f(x,t)=\lz_0 A_0(x,t)+\sum_{j=1}^\fz\lz_jA_j(x,t),
\end{equation*}
where, for any $j\in\nn$, $B_j\neq\cx$.
Moreover, there exists a positive constant $C$ such that, for any
$f\in T_\fai(\cx\times(0,\fz))$,
\begin{align}\label{2.10}
&\blz\lf(\{\lz_0A_0\}\bigcup\{\lz_j A_j\}_{j=1}^\fz\r)\\ \nonumber
&\hs:=\inf\Bigg\{\lz\in(0,\fz):\
\fai\lf(\cx,\frac{|\lz_0|}
{\lz\|\mathbf{1}_{\cx}\|_{L^\fai(\cx)}}\r)+\lf.\sum_{j=1}^\fz\fai\lf(B_j,\frac{|\lz_j|}
{\lz\|\mathbf{1}_{B_j}\|_{L^\fai(\cx)}}\r)\le1\r\}\nonumber\\
&\hs\,\,\le C\|f\|_{T_\fai(\cx\times(0,\fz))}.\nonumber
\end{align}
\end{lemma}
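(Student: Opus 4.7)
The case $\mu(\cx) = \fz$ is already established in \cite[Theorem 3.1]{bckyy13b} (see also \cite{cms85,r07,jy11,yys4}), so the plan concentrates on the new case $\mu(\cx) < \fz$. The overall strategy is the Coifman-Meyer-Stein-Russ level-set decomposition, modified to isolate a single global piece whenever the level sets $O_k := \{x \in \cx:\ \ca(f)(x) > 2^k\}$ exhaust all of $\cx$. First, since $\ca(f) \in L^\fai(\cx)$ forces $\ca(f) < \fz$ $\mu$-a.e., and since $\mu(\cx) < \fz$ implies $\mu(O_k) \to 0$ as $k \to \fz$, the integer $k_0 := \sup\{k \in \zz:\ O_k = \cx\}$ is finite, with the convention $\sup\emptyset := -\fz$. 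If $k_0 = -\fz$, every $O_k$ is a proper open subset of $\cx$ and the argument reduces verbatim to the $\mu(\cx) = \fz$ case; set $A_0 := 0$ and $\lz_0 := 0$. Hereafter assume $k_0 \in \zz$.

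Since $O_{k_0} = \cx$ forces $\widehat{O_{k_0}} = \cx \times (0,\fz)$, one has $\widehat{O_k} \setminus \widehat{O_{k+1}} = \emptyset$ for every $k < k_0$, yielding the $\mu \otimes dt/t$-a.e.\ identity
\begin{equation*}
f(y,t) = f(y,t) \mathbf{1}_{(\cx \times (0,\fz)) \setminus \widehat{O_{k_0+1}}}(y,t) + \sum_{k > k_0} f(y,t) \mathbf{1}_{\widehat{O_k} \setminus \widehat{O_{k+1}}}(y,t).
\end{equation*}
For each $k > k_0$, $O_k$ is a proper open subset of $\cx$, so a Christ-Whitney cover $O_k = \bigcup_j B_{k,j}$ with subordinate partition of unity $\{\chi_{k,j}\}_j$ is available; set $A_{k,j} := \lz_{k,j}^{-1} f \chi_{k,j} \mathbf{1}_{\widehat{O_k} \setminus \widehat{O_{k+1}}}$ with $\lz_{k,j} \sim 2^k \|\mathbf{1}_{B_{k,j}}\|_{L^\fai(\cx)}$, so that each $A_{k,j}$ is a $(T_\fai,\fz)$-atom supported in the tent $\widehat{\tilde B_{k,j}}$ over an enlargement $\tilde B_{k,j}$ of $B_{k,j}$. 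The verification of the atom condition and of the summability is exactly as in \cite[proof of Theorem 3.1]{bckyy13b}.

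The genuinely new piece is the single-atom term. Define
\begin{equation*}
A_0 := \lz_0^{-1} f \mathbf{1}_{(\cx \times (0,\fz)) \setminus \widehat{O_{k_0+1}}}, \qquad \lz_0 := C_0 \cdot 2^{k_0+1} \|\mathbf{1}_\cx\|_{L^\fai(\cx)},
\end{equation*}
for a sufficiently large absolute constant $C_0$. Setting $F := f \mathbf{1}_{(\cx \times (0,\fz)) \setminus \widehat{O_{k_0+1}}}$, the task is to show
\begin{equation*}
\lf\| \ca(F) \r\|_{L^p(\cx)} \lesssim 2^{k_0+1} \mu(\cx)^{1/p} \qquad \text{for every } p \in (1,\fz),
\end{equation*}
uniformly in $p$, which will confirm that $A_0$ is a $(T_\fai,\fz)$-single-atom. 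The bound is trivial on $O_{k_0+1}^\complement$, where $\ca(F) \le \ca(f) \le 2^{k_0+1}$. On $O_{k_0+1}$, the support constraint forces any contributing $(y,t) \in \Gamma(x)$ to satisfy $d(y, O_{k_0+1}^\complement) < t$, hence there is $z \in O_{k_0+1}^\complement$ with $d(x,z) < 2t$; choosing $z$ within $2 d(x, O_{k_0+1}^\complement)$ of $x$, doubling gives $V(x,t) \sim V(z,t)$ and $(y,t) \in \Gamma_5(z)$, whence the pointwise bound $\ca(F)(x) \lesssim \ca_5(f)(z_x)$ for some $z_x \in O_{k_0+1}^\complement$ close to $x$. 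Integrating in $x$ and transferring to the integration in $z \in O_{k_0+1}^\complement$ via a Hardy-Littlewood-maximal-function covering argument (the area-function analogue of Lemma \ref{l2.3}, whose constant depends only on the aperture ratio and not on the exponent $p$) gives the required estimate, since $\ca(f) \le 2^{k_0+1}$ on $O_{k_0+1}^\complement$.

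Combining the two constructions yields $f = \lz_0 A_0 + \sum_{k > k_0,\,j} \lz_{k,j} A_{k,j}$ (with convergence in $T^2_2(\cx \times (0,\fz))$), and the quasi-norm bound \eqref{2.10} follows from Lemma \ref{l2.2}(i) applied to the collection $\{\fai(B_{k,j}, 2^k)\}_{k,j}$ together with the single term $\fai(\cx, 2^{k_0+1})$, comparing each against $\int_\cx \fai(x, \ca(f)(x)/\lz)\,d\mu(x)$ through the definition of the Luxemburg quasi-norm. The main obstacle is the uniform-in-$p$ estimate in the single-atom step: a pointwise $L^\fz$-bound on $\ca(F)$ genuinely fails inside $O_{k_0+1}$, so the change-of-aperture step must be executed carefully to keep the implicit constant independent of $p$.
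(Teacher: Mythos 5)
Your overall architecture coincides with the paper's: identify the largest $k_0$ with $O_{k_0}=\cx$, observe that then $\widehat{O_{k_0}}=\cx\times(0,\fz)$ so that the piece $f\mathbf{1}_{(\cx\times(0,\fz))\setminus\widehat{O_{k_0+1}}}$ admits no Whitney decomposition and must be kept as one global $(T_\fai,\fz)$-single-atom, and run the usual Coifman--Meyer--Stein/Russ level-set argument for $k>k_0$. This is exactly the (terse) proof indicated in the paper, which invokes $T_\nu(O_{k_0})=\cx\times(0,\fz)$ and refers back to \cite[Theorem 3.1]{yys4}. Your coefficient estimate is also fine: since $\ca(f)>2^{k_0}$ everywhere on $\cx$, the uniform upper type $1$ property gives $\fai(\cx,2^{k_0+1}/\lz)\ls\int_\cx\fai(x,\ca(f)(x)/\lz)\,d\mu(x)$.

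The gap is in the verification that $A_0$ is a $(T_\fai,\fz)$-single-atom, i.e. that $\|\ca(F)\|_{L^p(\cx)}\ls 2^{k_0}[\mu(\cx)]^{1/p}$ for every $p\in(1,\fz)$, where $F:=f\mathbf{1}_{(\cx\times(0,\fz))\setminus\widehat{O_{k_0+1}}}$. The pointwise bound $\ca(F)(x)\ls\ca_5(f)(z_x)$ for some $z_x\in O_{k_0+1}^\complement$ is correct, but both steps you build on it fail. First, ``transferring the integration'' from $x\in O_{k_0+1}$ to $z\in O_{k_0+1}^\complement$ is not available: the assignment $x\mapsto z_x$ has no measure-theoretic control and can collapse a set of large measure onto a small subset of $O_{k_0+1}^\complement$; the averaging device that normally replaces it --- bounding $[\ca(F)(x)]^2$ by the mean of $[\ca_5(F)]^2$ over $B(x,2\rho(x))\cap O_{k_0+1}^\complement$ --- requires $O_{k_0+1}^\complement$ to have uniformly positive density in such balls, which is precisely why the standard proof works with the enlarged sets $O_k^\ast:=\{x\in\cx:\ \cm(\mathbf{1}_{O_k})(x)>1-\eta\}$ and truncates to $(\widehat{O_{k+1}^\ast})^\complement$ rather than $(\widehat{O_{k+1}})^\complement$. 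Second, even granting the transfer, you are left with $\ca_5(f)$ on $O_{k_0+1}^\complement$, whereas only $\ca_1(f)\le 2^{k_0+1}$ there; there is no ``area-function analogue of Lemma \ref{l2.3}'' of level-set type, because the inclusion \eqref{2.3} relies on the pointwise sup structure of the non-tangential maximal function and fails for square functions (change of aperture for $\ca$ is an $L^2$-on-good-sets/good-$\lambda$ phenomenon, not a pointwise one). The correct estimate, uniform in $p$, is the one already used for the ordinary atoms in \cite[Theorem 3.1]{bckyy13b}, applied with $B$ replaced by $\cx$: by duality against $g\in T^{p'}_2(\cx\times(0,\fz))$ with $\|g\|_{T^{p'}_2(\cx\times(0,\fz))}\le1$, the Coifman--Meyer--Stein averaging lemma over the closed set $(O_{k_0+1}^\ast)^\complement$ yields
\begin{equation*}
\lf|\int_0^\fz\int_\cx F(y,t)\,\overline{g(y,t)}\,\frac{d\mu(y)\,dt}{t}\r|
\ls\int_{(O_{k_0+1}^\ast)^\complement}\ca(f)(x)\,\ca(g)(x)\,d\mu(x)\le 2^{k_0+1}[\mu(\cx)]^{1/p},
\end{equation*}
since $\ca(f)\le 2^{k_0+1}$ on $(O_{k_0+1}^\ast)^\complement\subset O_{k_0+1}^\complement$ and by the H\"older inequality. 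With this substitution (and the corresponding use of $O_k^\ast$ throughout) your argument is complete.
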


When $\mu(\cx)=\fz$, the proof of Lemma \ref{l2.4} is similar to that of \cite[Theorem 3.1]{yys4}.
When $\mu(\cx)<\fz$, for any $k\in\zz$, let
$O_k:=\{x\in\cx:\ \ca(f)(x)>2^k\}$. If there exists $k_0\in\zz$ such that $O_{k_0}=\cx$,
then, for any $\nu\in(0,\fz)$,  $T_{\nu}(O_{k_0})=\cx\times(0,\fz)$ (see also \cite{r07}). Via this fact and similarly to the proof of \cite[Theorem 3.1]{yys4},
we can prove Lemma \ref{l2.3} in the case of $\mu(\cx)<\fz$,
the details being omitted here.

Now we show Proposition \ref{p2.2} by using Lemma \ref{l2.4}.

\begin{proof}[Proof of Proposition \ref{p2.2}]
When $\mu(\cx)=\fz$, Proposition \ref{p2.2} was proved in \cite[Theorem 5.4]{bckyy13b}.

Assume that $\mu(\cx)<\fz$. Let $M\in\nn$ with
$M>nq(\fai)/[2i(\fai)]$.
Assume that $\Phi$ is as in Lemma \ref{l2.1}.
Let $L^2_b(\cx\times(0,\fz))$ denote the \emph{set of all functions
$f\in L^2(\cx\times(0,\fz))$ with bounded supports}, namely, there exist a ball $B\subset\cx$
and $0<\wz{c}_1<\wz{c}_2<\fz$ such that $\supp(f)\subset B\times(\wz{c}_1,\wz{c}_2)$.
Then, for any $k\in\nn$,
$f\in L^2_{b}(\cx\times(0,\fz))$ and $x\in\cx$, the \emph{operator}
 $\pi_{\Phi,\,L,\,k}$ is defined by setting
\begin{equation*}
\pi_{\Phi,\,L,\,k}(f)(x):=C_{(\Phi,\,k)}\int_0^{\fz}\lf(t^2L\r)^{k+1}
\Phi(t\sqrt{L})(f(\cdot,\,t))(x)\,\frac{dt}{t},
\end{equation*}
where $C_{(\Phi,\,k)}$ is a positive constant such that
\begin{equation*}
C_{(\Phi,\,k)}\int_0^\fz  t^{2(k+1)}\Phi(t)t^2e^{-t^2} \,\frac{dt}{t}=1.
\end{equation*}

Denote by $T^b_{\fai}(\cx\times(0,\fz))$ and $T^{p,\,b}_2(\cx\times(0,\fz))$, with
$p\in(0,\fz)$, the \emph{set of all functions
in $T_\fai(\cx\times(0,\fz))$ and $T^p_2(\cx\times(0,\fz))$ with bounded supports}, respectively.
Then $T^b_{\fai}(\cx\times(0,\fz))
\subset T^{2,\,b}_2(\cx\times(0,\fz))$ as sets (see \cite[Proposition 3.1]{bckyy13b}).
Let $f\in T^b_{\fai}(\cx\times(0,\fz))$.
From Lemma \ref{l2.4}, \cite[Corollary 3.2]{bckyy13b} and the facts that
$T^b_{\fai}(\cx\times(0,\fz))\subset T^{2,\,b}_2(\cx\times(0,\fz))$ as sets and
$\pi_{\Phi,\,L,\,M}$ is bounded from $T_2^2(\cx\times(0,\fz))$ to $L^2(\cx)$, we deduce that there
exist a $(T_\fai,\,\fz)$-single-atom $A_0$, a sequence $\{A_j\}_{j=1}^\fz$
of $(T_\fai,\,\fz)$-atoms associated, respectively, with the balls $\{B_j\}_{j=1}^\fz$ and $\{\lz_0\}\cup\{\lz_j\}_{j=1}^\fz\subset \cc$ such that
\begin{align*}
\pi_{\Phi,\,L,\,M}(f)=\lz_0\pi_{\Phi,\,L,\,M}(A_0)+
\sum_{j=1}^\fz\lz_j\pi_{\Phi,\,L,\,M}(A_j)=:\lz_0\az_0+\sum_{j=1}^\fz\lz_j \az_j
\end{align*}
in $L^2(\cx)$ and
\begin{align*}
\Lambda\lf(\{\lz_0A_0\}\bigcup\{\lz_j A_j\}_{j=1}^\fz\r)\ls\|f\|_{T_{\fai}(\cx\times(0,\fz))},
\end{align*}
where, for any $j\in\nn$, $B_j\neq\cx$, and $\Lambda(\{\lz_0A_0\}\cup\{\lz_jA_j\}_{j=1}^\fz)$ is as in \eqref{2.10}.
By the proof of \cite[Theorem 5.4]{bckyy13b}, we know that, for each $j\in\nn$,
$\az_j$ is a $(\fai,\,q,\,\,M)_L$-atom, up to a harmless constant multiple, associated with the ball $B_j$ with $q\in(1,\fz)$.

Moreover, for any $f\in L^2(\cx)$ and $x\in\cx$, we define the \emph{Lusin area function, $S^{M+1,\,\Phi}_{L}(f)$, associated with $L$} by setting
\begin{equation*}
S^{M+1,\,\Phi}_{L}(f)(x):=\lf\{\int_{\bgz(x)}\lf|(t^2 L)^{M+1}\Phi(t\sqrt{L})(f)(y)\r|^2\frac{d\mu(y)\,dt}{V(x,t)t}\r\}^{1/2}.
\end{equation*}
Following the argument same as that used in the proof of \cite[Lemma 5.3]{bckyy},
we find that, for any $p\in (1,\,\fz)$, $S^{M+1,\,\Phi}_{L}$ is bounded  on $L^{p}(\cx)$.
Let $q\in(1,\fz)$. Then, for any $g\in L^{q'}(\cx)\cap L^{2}(\cx)$,
\begin{align*}
\lf|\int_{\cx}\az_0(x)g(x)\,d\mu(x)\r|&=\lf|\int_{\cx}\pi_{\Phi,\,L,\,M}(A_0)(x)g(x)\,d\mu(x)\r|\\
&\sim\lf|\int_0^\fz\int_{\cx}A_0(x,t)\lf(t^2L\r)^{M+1}
\Phi(t\sqrt{L})g(x)\,\frac{d\mu(x)\,dt}{t}\r|\\
&\ls\int_\cx\ca(A_0)(x)S^{M+1,\,\Phi}_{L}(g)(x)\,d\mu(x)\\
&\ls\lf\|\ca(A_0)\r\|_{L^{q}(\cx)}\lf\|S^{M+1,\,\Phi}_{L}(g)\r\|_{L^{q'}(\cx)}
\ls\|A_0\|_{T^q_2(\cx\times(0,\fz))}\|g\|_{L^{q'}(\cx)},
\end{align*}
which further implies that
$$\|\az_0\|_{L^{q}(\cx)}
\ls\|A_0\|_{T^q_2(\cx\times(0,\fz))}\ls[\mu(\cx)]^{1/q}\|\mathbf{1}_\cx\|_{L^\fai(\cx)}^{-1}.
$$
Thus, $\az_0$ is a $(\fai,\,q)$-single-atom up to a harmless constant multiple. Then the remainder of the proof of Proposition \ref{p2.2} is similar to that of \cite[Theorem 5.4]{bckyy13b}, the details being omitted here. This finishes the proof of Proposition \ref{p2.2}.
\end{proof}

\begin{remark}\label{r2.2}
We point out that \cite[Theorem 3.1]{bckyy13b} and \cite[Theorem 3.1]{yys4} hold true only when $\mu(\cx)=\fz$. In the case of $\mu(\cx)<\fz$, the correct version
of \cite[Theorem 3.1]{bckyy13b} and \cite[Theorem 3.1]{yys4} is Lemma \ref{l2.4}.
Similarly, the atomic characterization of the Musielak-Orlicz-Hardy
space $H_{\fai,\,L}(\cx)$ obtained in \cite[Theorem 5.4]{bckyy13b} and \cite[Theorem 5.5]{yys4} only
hold true under the assumption that $\mu(\cx)=\fz$. Thus, under Assumptions \ref{a1}
and \ref{a2}, Proposition \ref{p2.2} improves
the atomic characterization of $H_{\fai,\,L}(\cx)$ in \cite[Theorem 5.4]{bckyy13b} and \cite[Theorem 5.5]{yys4}
by removing the restriction that $\mu(\cx)=\fz$.

We also point out that Proposition \ref{p2.2} is new even when $\fai(x,t):=t^p$, with $p\in(0,1]$, for any
$x\in\cx$ and $t\in[0,\fz)$, and $\mu(\cx)<\fz$.
\end{remark}

Now we prove the equivalence of the spaces $H^{\phi,\,\az}_{\fai,\,L,\,\mathrm{max}}(\cx)$
and $H^{\phi}_{\fai,\,L,\,\mathrm{rad}}(\cx)$.

\begin{theorem}\label{t2.1}
Assume that $\cx$  is a space of homogeneous type.
Let $L$ satisfy Assumptions \ref{a1} and \ref{a2} and $\fai$ be as in Definition \ref{d1.2}.
Assume that $\phi\in\cs(\rr)$ is even and $\phi(0)=1$.
Then there exists a positive constant $C$, depending on $n$, $\phi$
and $\fai$, such that, for any $f\in L^2(\cx)$,
\begin{align}\label{2.11}
\lf\|\phi^\ast_L(f)\r\|_{L^\fai(\cx)}\le C\lf\|\phi^+_L(f)\r\|_{L^\fai(\cx)}.
\end{align}
Moreover, for any $\az\in(0,\fz)$, the spaces $H^{\phi,\,\az}_{\fai,\,L,\,\mathrm{max}}(\cx)$
and $H^{\phi}_{\fai,\,L,\,\mathrm{rad}}(\cx)$ coincide with equivalent quasi-norms.
\end{theorem}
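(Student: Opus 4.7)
The inequality $\phi^+_L(f)(x)\le\phi^\ast_{L,\az}(f)(x)$ is immediate from the definitions, so the content of \eqref{2.11} lies in the reverse direction. My plan mirrors the tangential/radial Fefferman--Stein comparison used in \cite{sy16,sy18,bdl1,yys16} for the $L^p$ and variable-exponent settings, now adapted to the Musielak--Orlicz framework.

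\textbf{Step 1 (pointwise majorization).} By the definitions of $q(\fai)$ and $i(\fai)$, fix $p\in(q(\fai),\fz)$ and $p_0\in(0,i(\fai))$ such that $\fai\in\aa_p(\cx)$ and $\fai$ is of uniformly lower type $p_0$, and then choose $r\in(0,1)$ small enough that $p_0/r>p$ and $p/r>q(\fai)$. The main analytic step is to establish the existence of a small aperture $\az_0\in(0,1)$ such that, for $\mu$-a.e.\ $x\in\cx$,
\begin{align*}
\phi^\ast_{L,\az_0}(f)(x)\ls\lf[\cm\lf(\lf|\phi^+_L(f)\r|^r\r)(x)\r]^{1/r}.
\end{align*}
The idea is to fix $(y,t)$ with $d(x,y)<\az_0 t$ and use a Calder\'on-type reproducing formula via a cutoff $\Phi$ as in Lemma \ref{l2.1} (so the kernel of $\Phi(t\sqrt{L})$ is supported in $\{d(\cdot,\cdot)\le t\}$), writing $\phi(t\sqrt{L})(f)(y)$ as the sum of a localized piece whose values can be expressed as weighted averages of $\phi(s\sqrt{L})(f)(z)$ over $z\in B(y,\beta t)\subset B(x,(\az_0+\beta)t)$ with $\az_0+\beta$ small, and an explicitly controlled tail. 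Each pointwise value on the localized side is dominated by $\phi^+_L(f)(z)$, and H\"older's inequality in the $r$-th power promotes the spatial average into a Hardy--Littlewood maximal average of $|\phi^+_L(f)|^r$ over a ball centred at $x$; the tail is absorbed using a rapid-decay argument entirely analogous to \eqref{2.8} in the proof of Proposition \ref{p2.1}.

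\textbf{Step 2 ($L^\fai$ transfer).} Choosing $r$ as above and rescaling via $s\mapsto s^{1/r}$, the function $\fai(\cdot,(\cdot)^{1/r})$ inherits a uniformly lower type $p_0/r>1$ and falls into the scope of Lemma \ref{l2.2}(iii) with exponent $p/r>1$ (observing that $\fai\in\aa_p(\cx)\subset\aa_{p/r}(\cx)$ since $r<1$); the resulting uniform-in-$t$ weighted boundedness of $\cm$ against $\fai(\cdot,t)$ yields
\begin{align*}
\lf\|\lf[\cm\lf(\lf|\phi^+_L(f)\r|^r\r)\r]^{1/r}\r\|_{L^\fai(\cx)}\ls\lf\|\phi^+_L(f)\r\|_{L^\fai(\cx)},
\end{align*}
which, combined with Step 1, gives $\|\phi^\ast_{L,\az_0}(f)\|_{L^\fai(\cx)}\ls\|\phi^+_L(f)\|_{L^\fai(\cx)}$.

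\textbf{Step 3 (aperture extension and conclusion).} Applying Lemma \ref{l2.3} to $F(y,t):=\phi(t\sqrt{L})(f)(y)$ with $\az_2:=\az_0$ and $\az_1:=\az$ passes from the small aperture $\az_0$ to any $\az\in(0,\fz)$ at the cost of the constant $(\az/\az_0)^{np/\wz p}$; specialized to $\az=1$ this yields \eqref{2.11}, and, combined with the trivial reverse inequality $\phi^+_L(f)\le\phi^\ast_{L,\az}(f)$, it proves the two-sided equivalence of the radial and non-tangential $L^\fai$-quasi-norms, whence the identification of $H^{\phi,\,\az}_{\fai,\,L,\,\mathrm{max}}(\cx)$ with $H^{\phi}_{\fai,\,L,\,\mathrm{rad}}(\cx)$ for every $\az\in(0,\fz)$.

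\textbf{Main obstacle.} The delicate step is Step 1: since the H\"older continuity encoded in Assumption \ref{a3} is \emph{not} assumed here, the pointwise tangential/radial comparison cannot be obtained from direct kernel-regularity estimates. Instead one must build the localization from spectral reproducing formulas together with Lemma \ref{l2.1}'s compact-support and $L^\fz$ bounds for $(t\sqrt{L})^k\Phi(t\sqrt{L})$, with the exponent $r$ calibrated against the critical indices of $\fai$ so that the subsequent weighted-$L^{p/r}$ boundedness of $\cm$ in Step 2 can be invoked.
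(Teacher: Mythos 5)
Your proposal follows essentially the same route as the paper's proof: that proof rests on the pointwise chain $\phi^\ast_L(f)\le 2^N\cm^{\ast\ast}_{L,\,\phi,\,N}(f)\ls[\cm([\phi^+_L(f)]^{\tz})]^{1/\tz}$ (the second bound being cited from \cite[(3.4)]{sy18}, which is precisely your Step 1, with the same calibration of the small exponent against $q(\fai)$ and $i(\fai)$), followed by a Musielak--Orlicz maximal-function transfer and then Proposition \ref{p2.1}/Lemma \ref{l2.3} for the aperture change, exactly as in your Step 3. The one place where your write-up is thinner than the actual argument is Step 2: Lemma \ref{l2.2}(iii) is a \emph{fixed-level} weighted inequality, so it does not by itself ``yield'' the variable-level modular bound $\int_\cx\fai(x,[\cm(g^r)(x)]^{1/r})\,d\mu(x)\ls\int_\cx\fai(x,g(x))\,d\mu(x)$; the paper bridges this via the layer-cake representation of Lemma \ref{l2.2}(ii), the Fubini theorem, a weak-type estimate obtained by splitting $g=g_1+g_2$, and the uniformly upper type $1$ and lower type $p_0$ properties together with the condition $\tz_0q_0<p_0$ (your $rp<p_0$) --- and that distribution-function computation is where most of the work of the published proof actually lies.
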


\begin{proof}
Assume that $f\in L^2(\cx)$. For any $N\in(0,\fz)$ and $x\in\cx$, let
$$\cm^{\ast\ast}_{L,\,\phi,\,N}(f)(x):=\sup_{y\in\cx,\,s\in(0,\fz)}
\frac{|\phi(s\sqrt{L})(f)(y)|}{[1+\frac{d(x,y)}{s}]^N}.
$$
Then it is easy to see that, for any $x\in\cx$,
\begin{align}\label{2.12}
\phi^\ast_L(f)(x)\le 2^N\cm^{\ast\ast}_{L,\,\phi,\,N}(f)(x).
\end{align}
Moreover, it was proved in \cite[(3.4)]{sy18} that, for any $\tz\in(0,1)$ satisfying $N\tz>2n$,
there exists a positive constant $C$, depending on $\phi$, $N$, $\tz$ and $n$, such that,
for almost every $x\in\cx$,
\begin{align}\label{2.13}
\cm^{\ast\ast}_{L,\,\phi,\,N}(f)(x)\le C\lf[\cm\lf([\phi^+_L(f)]^\tz\r)(x)\r]^{1/\tz}.
\end{align}
Let $q_0\in(q(\fai),\fz)$, $p_0\in(0,i(\fai))$, $N_0\in(0,\fz)$ and $\tz_0\in(0,1)$ be
such that  $N_0\tz_0>2n$ and $\tz_0 q_0<p_0$. Then we know that $\fai$ is of uniformly
lower type $p_0$ and $\fai\in\aa_{q_0}(\cx)$. For any
$\az\in(0,\fz)$ and $g\in L^{q_0}_{\loc}(\cx)$, let
$$g=g\mathbf{1}_{\{x\in\cx:\ |g(x)|\le\az\}}+g\mathbf{1}_{\{x\in\cx:\
|g(x)|>\az\}}=:g_1+g_2.$$
Then it is easy to see that $\{x\in\cx:\ \cm(g)(x)>2\az\}\subset\{x\in\cx:\ \cm(g_2)(x)>\az\}$.
From this and Lemma \ref{l2.2}(iii),
we deduce that, for any $t\in(0,\fz)$,
\begin{align*}
&\int_{\{x\in\cx:\ \cm(g)(x)>2\az\}}\fai(x,t)\,d\mu(x)\\
&\hs\le\int_{\{x\in\cx:\
\cm(g_2)(x)>\az\}}\fai(x,t)\,d\mu(x)\le\frac{1}{\az^{q_0}}\int_{\cx}
\lf[\cm(g_2)(x)\r]^{q_0}\fai(x,t)\,d\mu(x)\\
&\hs\ls\frac{1}{\az^{q_0}}\int_{\cx}
|g_2(x)|^{q_0}\fai(x,t)\,d\mu(x)\sim\frac{1}{\az^{q_0}}\int_{\{x\in\cx:\
|g(x)|>\az\}} |g(x)|^{q_0}\fai(x,t)\,d\mu(x),
\end{align*}
which further implies that, for any $\az\in(0,\fz)$,
\begin{align*}
&\int_{\{x\in\cx:\
[\cm([\phi^+_L(f)]^{\tz_0})(x)]^{1/\tz_0}>\az\}}\fai(x,t)\,d\mu(x)\\ \nonumber
&\hs\ls\frac{1}{\az^{\tz_0 q_0}}\int_{\{x\in\cx:\ [\phi^+_L(f)(x)]^{\tz_0}>\frac{\az^{\tz_0}}{2}\}}
\lf[\phi^+_L(f)(x)\r]^{\tz_0 q_0}\fai(x,t)\,d\mu(x)\\ \nonumber
&\hs\ls\sz_{\phi^+_L(f),\,t}\lf(\frac{\az}{2^{1/\tz_0}}\r)+\frac{1}{\az^{\tz_0
q_0}} \int_{\frac{\az}{2^{1/\tz_0}}}^{\fz}\tz_0 q_0s^{\tz_0 q_0-1}
\sz_{\phi^+_L(f),\,t}(s)\,ds,
\end{align*}
here and hereafter,
$$\sz_{\phi^+_L(f),\,t}(\az):=\int_{\{x\in\cx:\
\phi^+_L(f)(x)>\az\}}\fai(x,t)\,d\mu(x).$$
By this, \eqref{2.13}, the
uniformly upper type $1$ and uniformly lower type $p_0$ properties of $\fai$
and $\tz_0 q_0<p_0$, we conclude that
\begin{align*}
&\int_{\cx}\fai\lf(x,\cm^{\ast\ast}_{L,\,\phi,\,N_0}(f)(x)\r)\,d\mu(x)\\
&\hs\ls\int_{\cx} \fai\lf(x,\lf[\cm\lf([\phi^+_L(f)]^{\tz_0}\r)(x)\r]^{1/\tz_0}\r)\,d\mu(x)
\ls\int_{\cx}\int_0^{[\cm([\phi^+_L(f)]^{\tz_0})(x)]^{1/\tz_0}}\frac{\fai(x,t)}{t}\,dt\,d\mu(x)\\
&\hs\sim\int_0^{\fz}\frac{1}{t}\int_{\{x\in\cx:\ [\cm([\phi^+_L(f)]^{\tz_0})(x)]^{1/\tz_0}>t\}}\fai(x,t)\,d\mu(x)\,dt\\
&\hs\ls\int_0^{\fz}\frac{1}{t}\int_{\{x\in\cx:\
\phi^+_L(f)(x)>\frac{t}{2^{1/\tz_0}}\}}\fai(x,t)\,d\mu(x)\,dt\\
&\hs\hs+\int_0^{\fz}\frac{1}{t^{\tz_0 q_0+1}}\lf\{\int_{\frac{t}
{2^{1/\tz_0}}}^{\fz}\tz_0 q_0s^{\tz_0 q_0-1}\sz_{\phi^+_L(f),\,t}
(s)\,ds\r\}\,dt\\
&\hs\sim\mathrm{J}_{\phi^+_L(f)} +\int_0^{\fz}\tz_0 q_0s^{\tz_0
q_0-1}\lf\{\int_0^{2^{1/\tz_0}s}
\frac{1}{t^{\tz_0 q_0+1}}\sz_{\phi^+_L(f),\,t}(s)\,dt\r\}\,ds\\
&\hs\ls\mathrm{J}_{\phi^+_L(f)}+\int_0^{\fz}\tz_0 q_0s^{\tz_0 q_0-1}
\sz_{\phi^+_L(f),\,t}(s)\fai(x,2^{1/\tz_0}s)
\lf\{\int_0^{2^{1/\tz_0}s}\lf[\frac{t}{2^{1/\tz_0}s}\r]^{p_0}
\frac{1}{t^{\tz_0 q_0+1}}\,dt\r\}\,ds\\
&\hs\ls\mathrm{J}_{\phi^+_L(f)}+\int_0^{\fz}\tz_0 q_0s^{\tz_0
q_0-1}\sz_{\phi^+_L(f),\,t}(s)
\frac{\fai(x,s)}{(2^{\frac{1}{\tz_0}}s)^{p_0}}
\lf\{\int_0^{2^{1/\tz_0}s}t^{p_0-\tz_0 q_0-1}\,dt\r\}\,ds\\
&\hs\ls\mathrm{J}_{\phi^+_L(f)} +\int_0^{\fz}\int_{\{x\in\cx:\
\phi^+_L(f)(x)>s\}}\frac{\fai(x,s)}{s}\,ds\sim\int_{\cx}
\fai\lf(x,\phi^+_L(f)(x)\r)\,d\mu(x),
\end{align*}
where
$$\mathrm{J}_{\phi^+_L(f)}:=\int_0^{\fz}
\int_{\{x\in\cx:\ \phi^+_L(f)(x)>t\}}\frac{\fai(x,t)}{t}\,d\mu(x)\,dt,$$
 which further implies that
\begin{align*}
\lf\|\cm^{\ast\ast}_{L,\,\phi,\,N_0}(f)\r\|_{L^\fai(\cx)}\ls
\lf\|\phi^+_L(f)\r\|_{L^\fai(\cx)}.
\end{align*}
From this and \eqref{2.12}, we deduce that \eqref{2.11} holds true.

Moreover, by the fact that, for any $f\in L^2(\cx)$, $\phi^+_L(f)\le\phi^\ast_L(f)$,
\eqref{2.11} and \eqref{2.4}, we conclude that, for any $\az\in(0,\fz)$ and $f\in L^2(\cx)$,
\begin{align*}
\lf\|\phi^{\ast}_{L,\,\az}(f)\r\|_{L^\fai(\cx)}\sim\lf\|\phi^+_L(f)\r\|_{L^\fai(\cx)}.
\end{align*}
From this, it follows that
\begin{align*}
\lf[H^{\phi,\,\az}_{\fai,\,L,\,\mathrm{max}}(\cx)\cap L^2(\cx)\r]
=\lf[H^{\phi}_{\fai,\,L,\,\mathrm{rad}}(\cx)\cap L^2(\cx)\r]
\end{align*}
with equivalent quasi-norms, which, combined with the fact that
$$H^{\phi,\,\az}_{\fai,\,L,\,\mathrm{max}}(\cx)\cap L^2(\cx)\ \mathrm{and}\
H^{\phi}_{\fai,\,L,\,\mathrm{rad}}(\cx)\cap L^2(\cx)$$
are, respectively, dense in the spaces
$H^{\phi,\,\az}_{\fai,\,L,\,\mathrm{max}}(\cx)$ and $H^{\phi}_{\fai,\,L,\,\mathrm{rad}}(\cx)$, and a
density argument, implies that the spaces $H^{\phi,\,\az}_{\fai,\,L,\,\mathrm{max}}(\cx)$ and
$H^{\phi}_{\fai,\,L,\,\mathrm{rad}}(\cx)$
coincide with equivalent quasi-norms. This finishes the proof of Theorem \ref{t2.1}.
\end{proof}

To prove Theorem \ref{t1.1}, we need the following Whitney type covering lemma
on the space $\cx$ of homogeneous type, which was essentially established in
\cite[Chapter III, Theorem 1.3]{cw71}.

\begin{lemma}\label{l2.5}
Let $\cx$ be a space of homogeneous type and
$\boz\subset\cx$ an open set with $\mu(\boz)<\fz$. Then there exists
a sequence $\{B(x_k,\rho_k)\}_{k=1}^\fz$ of balls, with $\{x_k\}_{k=1}^\fz\subset\boz$ and $\rho_k:=\dist(x_k,\boz^\complement)$ for any $k\in\nn$, such that
\begin{itemize}
\item[\rm(i)] $\bigcup_{k=1}^\fz B(x_k,\rho_k/2)=\boz$;
\item[\rm(ii)] for any $k_1,\,k_2\in\nn$,
$B(x_{k_1},\rho_{k_1}/10)\cap
  B(x_{k_2},\rho_{k_2}/10)=\emptyset$ if $k_1\neq k_2$.
\end{itemize}
\end{lemma}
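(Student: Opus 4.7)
\medskip

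\noindent\textbf{Proof proposal for Lemma \ref{l2.5}.} The plan is to follow the classical Whitney-type construction of Coifman and Weiss \cite{cw71}, adapted to the metric space of homogeneous type $\cx$. First I would set $\rho(x):=\dist(x,\boz^\complement)$ for every $x\in\boz$. Since $\boz$ is open and, in the nontrivial case, $\boz\neq\cx$ (otherwise $\rho_k$ would be infinite), we have $\rho(x)\in(0,\fz)$ on $\boz$. Note that $\rho$ is $1$-Lipschitz, that is, $|\rho(x)-\rho(y)|\le d(x,y)$ for all $x,\,y\in\boz$; this will be the key geometric fact for comparing radii of selected balls with radii of arbitrary balls in the covering family. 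The candidate family is $\cf:=\{B(x,\rho(x)/10):\ x\in\boz\}$, which trivially covers $\boz$.

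Next I would perform a Vitali-type greedy selection from $\cf$. Partition $\boz$ into dyadic shells $\boz_j:=\{x\in\boz:\ 2^j\le\rho(x)<2^{j+1}\}$ for $j\in\zz$, and process the shells in decreasing order of $j$. In each shell, among all balls in $\cf$ centered in $\boz_j$ and disjoint from every previously selected ball, select a maximal pairwise disjoint subfamily; the existence of such a maximal subfamily is guaranteed by Zorn's lemma, and the hypothesis $\mu(\boz)<\fz$ combined with the doubling property \eqref{1.1} (which forces $V(x_k,\rho_k/10)$ to be bounded below by a positive constant depending on $\rho_k$, and hence gives a uniform lower bound on $\mu(B(x_k,\rho_k/10))$ within each shell) ensures that the selected family inside each shell is countable. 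Listing the chosen balls as $\{B(x_k,\rho_k/10)\}_{k=1}^\fz$ yields pairwise disjointness, which gives conclusion (ii).

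To verify (i), take any $y\in\boz$; if $B(y,\rho(y)/10)$ was itself selected, then $y\in B(x_k,\rho_k/2)$ trivially. Otherwise, by maximality in the shell containing $y$ (and processing from larger shells down), $B(y,\rho(y)/10)$ must meet some selected $B(x_k,\rho_k/10)$ with $\rho_k\ge\rho(y)/2$. From the triangle inequality, $d(y,x_k)\le\rho_k/10+\rho(y)/10$; combined with $\rho(y)\le 2\rho_k$ this gives $d(y,x_k)\le 3\rho_k/10<\rho_k/2$, so that $y\in B(x_k,\rho_k/2)$. Taking the union over all such $y$ yields $\bigcup_{k=1}^\fz B(x_k,\rho_k/2)=\boz$, which is (i).

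The main subtlety I anticipate is the bookkeeping in the greedy selection across infinitely many dyadic scales: one must verify both that the selection can be carried out in a countable manner (which uses $\mu(\boz)<\fz$ and doubling to bound the number of balls selected at each scale) and that when passing between scales the previously selected balls do not destroy the maximality argument at smaller scales. These are precisely the points carried out carefully in \cite[Chapter III, Theorem 1.3]{cw71}, and the argument goes through verbatim once $\rho$ is replaced by $\dist(\cdot,\boz^\complement)$; the numerical constants $1/10$ and $1/2$ in the statement are dictated by the triangle-inequality estimate above.
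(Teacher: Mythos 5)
Your construction is correct and is exactly the classical Coifman--Weiss Whitney covering argument that the paper itself invokes (it gives no proof, only the citation to \cite[Chapter III, Theorem 1.3]{cw71}): the $1$-Lipschitz function $\rho=\dist(\cdot,\boz^\complement)$, a Vitali-type maximal disjoint selection organized by dyadic shells, and the triangle-inequality estimate $d(y,x_k)\le 3\rho_k/10<\rho_k/2$ deliver (i) and (ii) as you describe. The only slight imprecision is the countability justification: doubling does not by itself give a uniform lower bound on $\mu(B(x_k,\rho_k/10))$ across a shell, but countability follows anyway because an uncountable pairwise disjoint family of sets of positive measure cannot sit inside $\boz$ with $\mu(\boz)<\fz$.
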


Now we show Theorem \ref{t1.1} via Propositions \ref{p2.1} and \ref{p2.2}, Theorem \ref{t2.1}
and Lemma \ref{l2.5}.

\begin{proof}[Proof of Theorem \ref{t1.1}]
By Theorem \ref{t2.1}, we know that, for any $\az\in(0,\fz)$,
\begin{align}\label{2.14}
\lf[H^{\phi,\,\az}_{\fai,\,L,\,\mathrm{max}}(\cx)\cap L^2(\cx)\r]
= \lf[H^{\phi}_{\fai,\,L,\,\mathrm{rad}}(\cx)\cap L^2(\cx)\r].
\end{align}

Now we prove that, for any $M\in\nn\cap(nq(\fai)/(2i(\fai)),\fz)$ and $q\in([r(\fai)]'I(\fai),\fz]\cap(1,\fz]$,
\begin{align}\label{2.15}
\lf[H^{M,\,q}_{\fai,\,L,\,\mathrm{at}}(\cx)\cap L^2(\cx)\r]
\subset \lf[H^{\phi}_{\fai,\,L,\,\mathrm{rad}}(\cx)\cap L^2(\cx)\r].
\end{align}

We first assume that $\mu(\cx)<\fz$. In this case, we know that $\diam(\cx)<\fz$
and $\cx=B_0=:B(x_0,R_0)$ with $R_0\in(0,\fz)$
(see, for example, \cite[Lemma 5.1]{ny97}).
For any $(\fai,q)$-single-atom $\az$,
by \eqref{1.3} and the definition of $\az^+_L$, we conclude that
$\az^+_L\ls\cm(\az)$, where $\cm$ denotes the Hardy-Littlewood maximal operator on $\cx$.
Moreover, from $q\in([r(\fai)]'I(\fai),\fz]\cap(1,\fz]$,
it follows that there exists $p_1\in (I(\fai),\,1]$
such that $\fai$ is of uniformly upper type $p_1$ and $\fai\in \rh_{(q/p_1)'}(\cx)$,
which, together with the fact that $\az^+_L\ls\cm(\az)$, the H\"older inequality,
the boundedness of $\cm$ on $L^{q}(\cx)$ and Lemma \ref{l2.2}(iv),
further implies that, for any $\lz\in\cc$,
\begin{align}\label{2.16}
\int_{\cx}\fai\lf(x,|\lz|\az^+_L(x)\r)\,d\mu(x)
&=\int_{B_0}\fai\lf(x,|\lz|\az^+_L(x)\r)\,d\mu(x)
\ls\int_{B_0}\fai\lf(x,|\lz|\cm(\az)(x)\r)\,d\mu(x)\\ \nonumber
&\ls\int_{B_0}\fai\lf(x,|\lz|\|\mathbf{1}_{B_0}\|^{-1}_{L^\fai(\cx)}\r)
\lf[1+\cm(\az)(x)\|\mathbf{1}_{B_0}\|_{L^\fai(\cx)}\r]^{p_1}\,d\mu(x)\\ \nonumber
&\ls\fai\lf(B_0,|\lz|\|\mathbf{1}_{B_0}\|_{L^\fai(\cx)}^{-1}\r)+\|\mathbf{1}_{B_0}\|_{L^\fai(\cx)}^{p_1}
\lf\|\cm(\az)\r\|_{L^q(\cx)}^{p_1}\\ \nonumber
&\hs\times\lf\{\int_{B_0} \lf[\fai\lf(x,\, |\lz|\|\mathbf{1}_{B_0}\|_{L^\fai(\cx)}^{-1}\r)\r]^{(\frac{q}{p_1})'}\,d\mu(x)\r\}^{\frac{1}{(\frac{q}{p_1})'}}\\ \nonumber
&\ls\fai\lf(B_0,|\lz|\|\mathbf{1}_{B_0}\|_{L^\fai(\cx)}^{-1}\r)\sim\fai\lf(\cx,|\lz|\|\mathbf{1}_{\cx}\|_{L^\fai(\cx)}^{-1}\r).
\end{align}
Via Proposition \ref{p2.1} and Theorem \ref{t2.1},
to finish the proof of \eqref{2.15}, it suffices to prove that, for any $\lz\in\cc$
and $(\fai,\,q,\,M)_L$-atom $\az$ associated with the ball $B:=B(x_B,r_B)$ with $x_B\in\cx$ and $r_B\in(0,\fz)$,
\begin{align}\label{2.17}
\int_{\cx}\fai\lf(x,|\lz|\az^+_L(x)\r)\,d\mu(x)\ls\fai\lf(B,|\lz|\|\mathbf{1}_{B}\|^{-1}_{L^\fai(\cx)}\r),
\end{align}
where the implicit positive constant depends on $n$ and $\fai$.
Indeed, when $\mu(\cx)<\fz$, let $f\in [H^{M,\,q}_{\fai,\,L,\,\mathrm{at}}(\cx)\cap L^2(\cx)]$. Then there exist
$\{\lz_0\}\cup\{\lz_j\}_{j=1}^\fz\subset\cc$, a $(\fai,q)$-single-atom $\az_0$ and a sequence $\{\az_j\}_{j=1}^\fz$
of $(\fai,\,q,\,M)_L$-atoms associated, respectively, with the balls $\{B_j\}_{j=1}^\fz$ such that
\begin{equation*}
f=\lz_0\az_0+\sum_{j=1}^\fz\lz_j\az_j\ \text{in}\ L^2(\cx) \ \text{and} \ \
\|f\|_{H^{M,\,q}_{\fai,\,L,\,\mathrm{at}}(\cx)}\sim\blz\lf(\{\lz_0\az_0\}\bigcup\{\lz_j\az_j\}_{j=1}^\fz\r),
\end{equation*}
which, combined with \eqref{2.16} and \eqref{2.17}, implies that, for any $\lz\in(0,\fz)$,
\begin{align*}
\int_\cx\fai\lf(x,\frac{f^+_L(x)}{\lz}\r)\,d\mu(x)
&\ls\int_\cx\fai\lf(x,\frac{|\lz_0|(\az_0)_L^+(x)}{\lz}\r)\,d\mu(x)+\sum_{j=1}^\fz\int_\cx
\fai\lf(x,\frac{|\lz_j|(\az_j)_L^+(x)}{\lz}\r)\,d\mu(x)\\
&\ls\fai\lf(\cx,\frac{|\lz_0|}{\lz\|\mathbf{1}_{\cx}\|_{L^\fai(\cx)}}\r)+
\sum_{j=1}^\fz\fai\lf(B_j,\frac{|\lz_j|}{\lz\|\mathbf{1}_{B_j}\|_{L^\fai(\cx)}}\r).
\end{align*}
From this and Theorem \ref{t2.1}, it follows that $f\in [H^{\phi}_{\fai,\,L,\,\mathrm{rad}}(\cx)\cap L^2(\cx)]$ and
$$\|f\|_{H^{\phi}_{\fai,\,L,\,\mathrm{rad}}(\cx)}\ls
\|f\|_{H^{M,\,q}_{\fai,\,L,\,\mathrm{at}}(\cx)}.$$
Thus, \eqref{2.15} holds true in the case that $\mu(\cx)<\fz$.

When $\mu(\cx)=\fz$, by \eqref{2.17} and a similar argument as above, we find that \eqref{2.15} also holds true.

Now we show \eqref{2.17}. By \eqref{1.3}, we know that, for any $x\in\cx$,
\begin{align}\label{2.18}
\az^+_L(x)\ls\cm(\az)(x).
\end{align}
Moreover, from $q\in([r(\fai)]'I(\fai),\fz]\cap(1,\fz]$,
we deduce that there exists $p_2\in (I(\fai),\,1]$ such that $\fai$ is
of uniformly upper type $p_2$ and
$\fai\in \rh_{(q/p_2)'}(\cx)$. By this, \eqref{2.18}, the H\"older inequality,
the boundedness of $\cm$ on $L^{q}(\cx)$ and Lemma \ref{l2.2}(iv),
similarly to the proof of \eqref{2.16}, we find that, for any $\lz\in\cc$,
\begin{align}\label{2.19}
&\int_{4B}\fai\lf(x,|\lz|\az^+_L(x)\r)\,d\mu(x)
\ls\fai\lf(B,|\lz|\|\mathbf{1}_{B}\|_{L^\fai(\cx)}^{-1}\r).
\end{align}
For any $x\in S_j(B)$ with $j\in\nn$ and $j\ge2$, similarly to the proof of \cite[(2.26)]{yys16},
we conclude that, for any $s\in(0,2M)$,
\begin{align}\label{2.20}
\az^+_L(x)\ls 2^{-(n+s)}\|\mathbf{1}_B\|^{-1}_{L^\fai(\cx)}.
\end{align}
Let $s\in(nq(\fai)/i(\fai),2M)$. From $s>nq(\fai)/i(\fai)$, it follows that there exist $p_0\in(0,i(\fai))$
and $\wz{q}\in(q(\fai),\fz)$ such that $s>n\wz{q}/p_0$, $\fai$ is
of uniformly lower type $p_0$ and $\fai\in\aa_{\wz{q}}(\cx)$,
which, combined with \eqref{2.20} and Lemma \ref{l2.2}(iv), implies that
\begin{align*}
\int_{\cx\backslash(4B)}\fai\lf(x,|\lz|\az^+_L(x)\r)\,d\mu(x)&
=\sum_{j=2}^\fz\int_{S_j(B)}\fai\lf(x,|\lz|\az^+_L(x)\r)\,d\mu(x)\\
&\ls\sum_{j=2}^\fz2^{-j(n+s)p_0}\fai\lf(S_j(B),|\lz|\|\mathbf{1}_B\|^{-1}_{L^\fai(\cx)}\r)\\
&\ls\sum_{j=2}^\fz2^{-j[(n+s)p_0-n\wz{q}]}\fai\lf(B,|\lz|\|\mathbf{1}_B\|^{-1}_{L^\fai(\cx)}\r)
\ls\fai\lf(B,|\lz|\|\mathbf{1}_B\|^{-1}_{L^\fai(\cx)}\r).
\end{align*}
By this and \eqref{2.19}, we find that \eqref{2.17} holds true,
which completes the proof of \eqref{2.15}.

Now we prove that, for any $q\in(1,\fz]$,
\begin{align}\label{2.21}
\lf[H^{\phi,\,\az}_{\fai,\,L,\,\mathrm{max}}(\cx)\cap L^2(\cx)\r]
\subset \lf[H^{M,\,\fz}_{\fai,\,L,\,\mathrm{at}}(\cx)\cap L^2(\cx)\r]\subset
\lf[H^{M,\,q}_{\fai,\,L,\,\mathrm{at}}(\cx)\cap L^2(\cx)\r].
\end{align}
Via Proposition \ref{p2.1}, to show \eqref{2.21}, it suffices to prove that,
for any $f\in [H_{\fai,\,L,\,\mathrm{max}}(\cx)\cap L^2(\cx)]$,
$f\in H^{M,\,\fz}_{\fai,\,L,\,\mathrm{at}}(\cx)$ and
\begin{align}\label{2.22}
\|f\|_{H^{M,\,\fz}_{\fai,\,L,\,\mathrm{at}}(\cx)}\ls\|f\|_{H_{\fai,\,L,\,\mathrm{max}}(\cx)},
\end{align}
where the implicit positive constant depends on $n$, $M$ and $\fai$.
To prove \eqref{2.22}, we borrow some ideas from \cite{bdl}.

We first assume that $\mu(\cx)<\fz$. Let $\Phi$
be as in Lemma \ref{l2.1} and $M\in\nn$ with $M>nq(\fai)/[2i(\fai)]$.
Then, from the spectral calculus, it follows that there exists a positive constant $C_{(\Phi,\,M)}$ such that
\begin{align*}
f=C_{(\Phi,\,M)}\int_0^\fz(t\sqrt{L})^{2M}\Phi(t\sqrt{L})\Phi(t\sqrt{L})(f)\frac{dt}{t}
\end{align*}
in $L^2(\cx)$. For any $x\in\rr$, let
$$\psi(x):=C_{(\Phi,\,M)}\int_1^\fz (tx)^{2M}[\Phi(tx)]^2\frac{dt}{t}=
C_{(\Phi,\,M)}\int_x^\fz t^{2M}[\Phi(t)]^2\frac{dt}{t}.
$$
Then $\psi\in\cs(\rr)$ is an even function, $\psi(0)=1$ and, for any $s\in(0,\fz)$ and $x\in\rr$,
$$\psi(sx)=C_{(\Phi,\,M)}\int_s^\fz (tx)^{2M}[\Phi(tx)]^2\frac{dt}{t},
$$
which further implies that
\begin{align*}
\psi(s\sqrt{L})(f)=C_{(\Phi,\,M)}\int_s^\fz (t\sqrt{L})^{2M}\Phi(t\sqrt{L})\Phi(t\sqrt{L})(f)\frac{dt}{t}
\end{align*}
in $L^2(\cx)$.
Furthermore, for any $f\in L^2(\cx)$ and $x\in\cx$, let
$$\cm_L(f)(x):=\sup_{d(x,y)<8t,\,t\in(0,\fz)}\lf[\lf|\psi(t\sqrt{L})(f)(y)\r|+\lf|\Phi(t\sqrt{L})(f)(y)\r|\r].
$$

Let $R_0:=\diam(X)/2$. Then
\begin{align}\label{2.23}
f&=C_{(\Phi,\,M)}\int_0^{R_0}(t\sqrt{L})^{2M}\Phi(t\sqrt{L})\Phi(t\sqrt{L})(f)\frac{dt}{t}
+C_{(\Phi,\,M)}\int_{R_0}^{\fz}\cdots\\ \nonumber
&=C_{(\Phi,\,M)}\int_0^{R_0}(t\sqrt{L})^{2M}\Phi(t\sqrt{L})\Phi(t\sqrt{L})(f)\frac{dt}{t}
+\psi(R_0\sqrt{L})(f)=:f_1+f_2.
\end{align}

By Proposition \ref{p2.1}, we find that
\begin{align}\label{2.24}
\|\cm_L(f)\|_{L^\fai(\cx)}\sim\|f\|_{H_{\fai,\,L,\,\mathrm{max}}(\cx)},
\end{align}
where the implicit positive constant depends on $n$, $M$, $\fai$ and $\Phi$.
From $R_0=\diam(\cx)/2$, it follows that, for any $x,\,y\in\cx$,
$$\lf|\psi(R_0\sqrt{L})(f)(x)\r|\le\sup_{d(z,y)<6R_0}\lf|\psi(R_0\sqrt{L})(f)(z)\r|\le\cm_L(f)(y),
$$
which, together with \eqref{2.24}, further implies that
\begin{align*}
\|f_2\|_{L^\fz(\cx)}&\le\inf_{y\in\cx}\cm_L(f)(y)=\lf[\inf_{y\in\cx}\cm_L(f)(y)\r]
\|\mathbf{1}_\cx\|_{L^\fai(\cx)}\|\mathbf{1}_\cx\|_{L^\fai(\cx)}^{-1}.
\end{align*}
Let
$$\az_{0,\,1}:=\lf[\inf_{y\in\cx}\cm_L(f)(y)\r]^{-1}\|\mathbf{1}_\cx\|_{L^\fai(\cx)}^{-1}f_2 \ \ \text{and}
\ \ \lz_{0,\,1}:=\lf[\inf_{y\in\cx}\cm_L(f)(y)\r]\|\mathbf{1}_\cx\|_{L^\fai(\cx)}.$$
Then $f_2=\lz_{0,\,1}\az_{0,\,1}$
and $\az_{0,\,1}$ is a $(\fai,\fz)$-single-atom. Furthermore, for any $\lz\in(0,\fz)$,
\begin{align}\label{2.25}
\fai\lf(\cx,\frac{\lz_{0,\,1}}{\lz\|\mathbf{1}_{\cx}\|_{L^\fai(\cx)}}\r)
\le\int_\cx\fai\lf(x,\frac{\cm_L(f)(x)}{\lz}\r)\,d\mu(x).
\end{align}

Now we deal with the term $f_1$. For any $k\in\zz$, let
$$\boz_k:=\lf\{x\in\cx:\ \cm_L(f)(x)>2^k\r\}.$$
By the facts that $\cm_L(f)$ is lower continuous and $\cx$ is bounded,
we conclude that there exists $k_0\in\zz$ such that $\boz_{k_0}=\cx$ and $\boz_{k_0+1}\neq\cx$.
Without loss of generality we may assume that $k_0=0$. Then,
for any $t\in(0,\fz)$ and $k\in\zz_+$, let
\begin{equation}\label{2.26}
\boz_{k,\,t}:=\begin{cases}\boz_0, \ \ \ &k=0,\\
\{x\in\cx:\ d(x,\boz_k^\complement)>4t\}, \ \ \ &k\in\nn,
\end{cases}
\end{equation}
and $T_{k,\,t}:=\boz_{k,\,t}\backslash\boz_{k+1,\,t}$.
It is easy to see that, for any $t\in(0,\fz)$, $\cx=\cup_{k=0}^\fz T_{k,\,t}$,
which implies that
\begin{align}\label{2.27}
f_1&=\sum_{k=0}^\fz C_{(\Phi,\,M)}\int_0^{R_0}(t\sqrt{L})^{2M}\Phi(t\sqrt{L})\lf[\Phi(t\sqrt{L})(f)
\mathbf{1}_{T_{k,\,t}}\r]\frac{dt}{t}=:\sum_{k=0}^\fz f_{1,\,k}.
\end{align}
We first estimate $f_{1,\,0}$. For any $x\in\cx$,
$$f_{1,\,0}(x)=C_{(\Phi,\,M)}\int_0^{R_0}\int_{T_{0,\,t}}K_{(t\sqrt{L})^{2M}\Phi(t\sqrt{L})}(x,y)
\Phi(t\sqrt{L})(f)(y)\,d\mu(y)\frac{dt}{t}.
$$
We deal with $f_{1,\,0}$ by considering the following two cases.

\emph{Case 1)} $x\in\boz_1^\complement$. In this case,
by Lemma \ref{l2.1}, we know that, for any $t\in(0,\fz)$,
$$\supp\lf(K_{(t\sqrt{L})^{2M}\Phi(t\sqrt{L})}(x,\cdot)\r)\subset\{z\in\cx:\ d(x,z)\le t\}\subset T_{0,\,t},
$$
which implies that
\begin{align}\label{2.28}
|f_{1,\,0}(x)|&=C_{(\Phi,\,M)}\lf|\int_0^{R_0}\int_{\cx}K_{(t\sqrt{L})^{2M}\Phi(t\sqrt{L})}(x,y)
\Phi(t\sqrt{L})(f)(y)\,d\mu(y)\frac{dt}{t}\r|\\ \nonumber
&=C_{(\Phi,\,M)}\lf|\int_0^{R_0}(t^2\sqrt{L})^M\Phi(t\sqrt{L})
\Phi(t\sqrt{L})(f)(x)\,\frac{dt}{t}\r|\\ \nonumber
&\le\lim_{\epz\to0}|\psi(\epz\sqrt{L})(f)(x)|+|\psi(R_0\sqrt{L})(f)(x)|.
\end{align}
Moreover, since $x\in\boz_1^\complement$, it follows that, for any $s\in(0,\fz)$,
$|\psi(s\sqrt{L})f(x)|\le2$, which, combined with \eqref{2.28}, implies that
\begin{align}\label{2.29}
|f_{1,\,0}(x)|\le4.
\end{align}

\emph{Case 2)} $x\in\boz_1$. In this case, we write
\begin{align}\label{2.30}
f_{1,\,0}(x)&=C_{(\Phi,\,M)}\int_0^{R_0}\int_{T_{0,\,t}}K_{(t\sqrt{L})^{2M}\Phi(t\sqrt{L})}(x,y)
\Phi(t\sqrt{L})(f)(y)\,d\mu(y)\frac{dt}{t}\\ \nonumber
&=C_{(\Phi,\,M)}\int_0^{d(x,\boz_1^\complement)/5}
\int_{T_{0,\,t}}K_{(t\sqrt{L})^{2M}\Phi(t\sqrt{L})}(x,y)
\Phi(t\sqrt{L})(f)(y)\,d\mu(y)\frac{dt}{t}\\ \nonumber
&\hs+C_{(\Phi,\,M)}\int_{d(x,\boz_1^\complement)/5}^{d(x,\boz_1^\complement)/3}\cdots
+C_{(\Phi,\,M)}\int_{d(x,\boz_1^\complement)/3}^{R_0}\cdots\\ \nonumber
&=:\mathrm{E}_1(x)+\mathrm{E}_2(x)+\mathrm{E}_3(x).
\end{align}
For any $t\in(0,d(x,\boz_1^\complement)/5)$ and $y\in T_{0,\,t}$, we know that $d(x,y)\ge t$,
which, together with Lemma \ref{l2.1}, implies that
$K_{(t\sqrt{L})^{2M}\Phi(t\sqrt{L})}(x,y)=0$
and hence $\mathrm{E}_1(x)=0$.
For the term $\mathrm{E}_2(x)$, by Lemma \ref{l2.1} again, we find that
\begin{align}\label{2.31}
|\mathrm{E}_2(x)|\ls\int_{d(x,\boz_1^\complement)/5}^{d(x,\boz_1^\complement)/3}\frac{1}{\mu(B(x,t))}
\int_{B(x,t)\cap T_{0,\,t}}\sup_{y\in T_{0,\,t}}\lf|\Phi(t\sqrt{L})f(y)\r|\,d\mu(y)\,\frac{dt}{t}.
\end{align}
From the definition of $T_{0,\,t}$, we deduce that, for any $y\in T_{0,\,t}$,
there exists $z\in\boz_1^\complement$ such that $d(y,z)<6t$, which further implies that
$$\lf|\Phi(t\sqrt{L})f(y)\r|\le\cm_L(f)(z)\le2.
$$
By this and \eqref{2.31}, we conclude that $|\mathrm{E}_2(x)|\ls1$.

Now we estimate the term $\mathrm{E}_3(x)$. From $x\in\boz_1$ and $t\in(d(x,\boz_1^\complement)/3,R_0)$,
it follows that, for any $t\in(d(x,\boz_1^\complement)/3,R_0)$,
$$\supp\lf(K_{(t\sqrt{L})^{2M}\Phi(t\sqrt{L})}(x,\cdot)\r)\subset\{z\in\cx:\ d(x,z)\le t\}\subset T_{0,\,t},
$$
which, combined with the definition of $\psi(s\sqrt{L})$, implies that
\begin{align}\label{2.32}
\mathrm{E}_3(x)=\psi(s_0\sqrt{L})(f)(x)-\psi(R_0\sqrt{L})(f)(x)
\end{align}
with $s_0=d(x,\boz_1^\complement)/3$.
By $t\in(d(x,\boz_1^\complement)/3,R_0)$, we know that there exists $z\in\boz_1^\complement$ such that
$d(x,z)<3t$, which implies that, for any $t\in(d(x,\boz_1^\complement)/3,R_0)$,
$$\lf|\psi(t\sqrt{L})f(x)\r|\le\cm_L(f)(z)\le2.
$$
From this and \eqref{2.32}, we deduce that $|\mathrm{E}_3(x)|\ls1$,
which, together with \eqref{2.30} and the fact that $\mathrm{E}_1(x)=0$ and $|\mathrm{E}_2(x)|\ls1$, implies that,
for any $x\in\boz_1$, $|f_{1,\,0}(x)|\ls1$. By this and \eqref{2.29}, we find that, for any $x\in\cx$,
$|f_{1,\,0}(x)|\ls1$, which, combined with the fact that $\cx=\boz_0=\{x\in\cx:\ \cm_L(f)(x)>1\}$,
further implies that, for any $\lz\in(0,\fz)$,
\begin{align*}
\|f_{1,\,0}\|_{L^\fz(\cx)}\ls\inf_{y\in\cx}\cm_L(f)(y).
\end{align*}
From this, we deduce that there exists a positive constant $\wz{C}_1$ such that
$$\|f_{1,\,0}\|_{L^\fz(\cx)}\le \wz{C}_1\lf[\inf_{y\in\cx}\cm_L(f)(y)\r]\|\mathbf{1}_\cx\|_{L^\fai(\cx)}
\|\mathbf{1}_\cx\|_{L^\fai(\cx)}^{-1}.$$
Let
$$\az_{1,\,0}:=\wz{C}^{-1}_1\lf[\inf_{y\in\cx}\cm_L(f)(y)\r]^{-1}\|\mathbf{1}_\cx\|_{L^\fai(\cx)}^{-1}f_{1,\,0}$$
and
$\lz_{1,\,0}:=\wz{C}_1[\inf_{y\in\cx}\cm_L(f)(y)]\|\mathbf{1}_\cx\|_{L^\fai(\cx)}$.
Then $f_{1,\,0}=\lz_{1,\,0}\az_{1,\,0}$
and $\az_{1,\,0}$ is a $(\fai,\,\fz)$-single-atom.
Moreover, for any $\lz\in(0,\fz)$,
\begin{align}\label{2.33}
\fai\lf(\cx,\frac{\lz_{1,\,0}}{\lz\|\mathbf{1}_{\cx}\|_{L^\fai(\cx)}}\r)
\le\int_\cx\fai\lf(x,\frac{\cm_L(f)(x)}{\lz}\r)\,d\mu(x).
\end{align}

Now we deal with the term $f_{1,\,k}$ with $k\in\nn$. Observe that, for each $k\in\nn$, $\boz_k$ is open and $\mu(\boz_k)<\fz$.
Then, by Lemma \ref{l2.5}, we know that, for each $k\in\nn$, there exist a sequence
$\{x_{k,\,i}\}_{i=1}^\fz\subset \boz_k$ of points and a sequence $\{B_{k,\,i}\}_{i=1}^\fz$ of balls such that

\begin{itemize}
\item[(i)] $\bigcup_{i=1}^\fz B_{k,\,i}=\boz_k$;

\item[(ii)] for any $i_1,\,i_2\in\nn$, $\frac{1}{5}B_{k,\,i_1}\cap
  \frac{1}{5}B_{k,\,i_2}=\emptyset$ if $i_1\neq i_2$, where $B_{k,\,i}:=B(x_{k,\,i},\rho_{k,\,i}/2)$
with $\rho_{k,\,i}:=\dist(x_{k,\,i},\boz_k^\complement)$.
\end{itemize}
For any $k,\,i\in\nn$ and $t\in(0,\fz)$, let $B^t_{k,\,i}:=B(x_{k,\,i}, r_{B_{k,\,i}}+2t)$. Moreover, for any $t\in(0,\fz)$ and $k,\,i\in\nn$, let
$$R^t_{k,\,i}:=\begin{cases}
T_{k,\,t}\cap B^t_{k,\,i} \ \ &\text{if}\ T_{k,\,t}\cap B_{k,\,i}\neq\emptyset,\\
0\ \ \ &\text{if}\ T_{k,\,t}\cap B_{k,\,i}=\emptyset,
\end{cases}
$$
and $E^t_{k,\,i}:=R^t_{k,\,i}\backslash(\bigcup_{\ell>k}R^t_{\ell,\,i})$.
Then it is easy to see that, for any $t\in(0,\fz)$ and $k\in\nn$,
$$T_{k,\,t}=\bigcup_{i\in\nn}E^t_{k,\,i}.
$$
Thus, for any $k\in\nn$,
$$f_{1,\,k}=\sum_{i\in\nn}C_{(\Phi,\,M)}\int_0^{R_0}(t\sqrt{L})^{2M}
\Phi(t\sqrt{L})\lf[\Phi(t\sqrt{L})(f)
\mathbf{1}_{E^t_{k,\,i}}\r]\frac{dt}{t}.$$

For any $k,\,i\in\nn$, when $E^t_{k,\,i}=\emptyset$, let $\az_{k,\,i}:=0$ and $\lz_{k,\,i}:=0$;
when $E^t_{k,\,i}\neq\emptyset$, let $\az_{k,\,i}:=L^M b_{k,\,i}$ and $\lz_{k,\,i}:=2^k\|\mathbf{1}_{B_{k,\,i}}\|_{L^\fai(\cx)}$
with
$$b_{k,\,i}=\frac{C_{(\Phi,\,M)}}{\lz_{k,\,i}}\int_0^{R_0}t^{2M}\Phi(t\sqrt{L})\lf[\Phi(t\sqrt{L})(f)
\mathbf{1}_{E^t_{k,\,i}}\r]\frac{dt}{t}.
$$
Then
$$\sum_{k\in\nn}f_{1,\,k}=\sum_{k,\,i\in\nn}\lz_{k,\,i}\az_{k,\,i}
$$
in $L^2(\cx)$.

It is obvious that, when $\az_{k,\,i}=0$, then $\az_{k,\,i}$ is a $(\fai,\fz,M)_L$-atom associated with
the ball $B_{k,\,i}$. Now we assume that $\az_{k,i}\neq0$.
If $r_{B_{k,\,i}}<t/2$, then $d(x_{B_{k,\,i}}, \boz_k^\complement)=2r_{B_{k,\,i}}<t$, which implies that
$$B^t_{k,\,i}=B(x_{B_{k,\,i}}, r_{B_{k,\,i}}+2t)\subset\lf\{x\in\cx:\ d(x,\boz_k^\complement)<4t\r\}.
$$
By this, we know that $R^t_{k,\,i}=T_{k,\,t}\cap B^t_{k,\,i}=\emptyset$. Therefore, when $\az_{k,\,i}\neq0$, then
$r_{B_{k,\,i}}\ge t/2$, which, together with the definition of $b_{k,\,i}$ and Lemma \ref{l2.1}, further implies that,
for any $j\in\{0,\,1,\,\ldots,\,M\}$, $\supp(L^j b_{k,\,i})\subset 8B_{k,\,i}$.
Now we prove that there exists a positive constant $\wz{C}_2$ such that, for any $j\in\{0,\,1,\,\ldots,\,M\}$,
\begin{align}\label{2.34}
\lf\|L^j b_{k,\,i}\r\|_{L^\fz(\cx)}\le\wz{C}_2 (8r_{B_{k,\,i}})^{2(M-j)}\lf\|\mathbf{1}_{8B_{k,\,i}}\r\|_{L^\fai(\cx)}^{-1}.
\end{align}
For any $j\in\{0,\,1,\,\ldots,\,M-1\}$, from $r_{B_{k,\,i}}\ge t/2$ and Lemma \ref{l2.1}, it follows that,
for any $x\in\cx$,
\begin{align}\label{2.35}
\lf|L^jb_{k,\,i}(x)\r|&\le\frac{C_{(\Phi,\,M)}}{\lz_{k,\,i}}\int_0^{2r_{B_{k,\,i}}}t^{2(M-j)}
\lf|(t^2L)^j\Phi(t\sqrt{L})\lf[\Phi(t\sqrt{L})(f)
\mathbf{1}_{E^t_{k,\,i}}\r](x)\r|\frac{dt}{t}\\ \nonumber
&\ls\frac{1}{\lz_{k,\,i}}\int_0^{2r_{B_{k,\,i}}}t^{2(M-j)}\frac{1}{\mu(B(x,t))}
\int_{B(x,t)\cap E^t_{k,\,i}}\lf|\Phi(t\sqrt{L})f(y)\r|\,d\mu(y)\frac{dt}{t}\\ \nonumber
&\ls\frac{1}{\lz_{k,\,i}}\int_0^{2r_{B_{k,\,i}}}t^{2(M-j)}\frac{1}{\mu(B(x,t))}
\int_{B(x,t)\cap T_{k,\,t}}\lf|\Phi(t\sqrt{L})f(y)\r|\,d\mu(y)\frac{dt}{t}.
\end{align}
By the definition of $T_{k,\,t}$, we know that, for each $y\in T_{k,\,t}$, there exists $z\in\boz_{k+1}^\complement$
such that $d(y,z)<4t$, which further implies that, for any $y\in T_{k,\,t}$,
$$\lf|\Phi(t\sqrt{L})(f)(y)\r|\le\cm_L(f)(z)\le2^{k+1}.
$$
From this and \eqref{2.35}, we deduce that, for any $j\in\{0,\,1,\,\ldots,\,M-1\}$ and $x\in\cx$,
\begin{align}\label{2.36}
\lf|L^jb_{k,\,i}(x)\r|&
\ls\frac{2^k}{\lz_{k,\,i}}\int_0^{2r_{B_{k,\,i}}}t^{2(M-j)}\frac{dt}{t}\ls r_{B_{k,\,i}}^{2(M-j)}
\lf\|\mathbf{1}_{B_{k,\,i}}\r\|^{-1}_{L^\fai(\cx)}\\ \nonumber
&\ls(8r_{B_{k,\,i}})^{2(M-j)}\lf\|\mathbf{1}_{8B_{k,\,i}}\r\|^{-1}_{L^\fai(\cx)}.
\end{align}
When $j=M$,
$$L^Mb_{k,\,i}(x)=\frac{C_{(\Phi,\,M)}}{\lz_{k,\,i}}\int_0^{2r_{B_{k,\,i}}}
(t^2L)^M\Phi(t\sqrt{L})\lf[\Phi(t\sqrt{L})(f)
\mathbf{1}_{E^t_{k,\,i}}\r](x)\frac{dt}{t}.
$$
By
$$E^t_{k,\,i}=[(T_{k,\,i}\cap B^t_{k,\,i})\backslash(T_{k,\,i}\cap F^t_{k,\,i})],
$$
where $F^t_{k,\,i}:=\cup_{\ell>i}B^t_{k,\,\ell}=\{x\in\cx:\ d(x,\cup_{\ell>i}B_{k,\ell})<2t\}$,
we conclude that
\begin{align}\label{2.37}
L^Mb_{k,\,i}(x)
&=\frac{C_{(\Phi,\,M)}}{\lz_{k,i}}\int_0^{2r_{B_{k,\,i}}}
\int_{T_{k,\,i}\cap B^t_{k,\,i}}K_{(t^2L)^M\Phi(t\sqrt{L})}(x,y)\Phi(t\sqrt{L})(f)
(y)\,d\mu(y)\frac{dt}{t}\\ \nonumber
&\hs-\frac{C_{(\Phi,\,M)}}{\lz_{k,\,i}}\int_0^{2r_{B_{k,\,i}}}
\int_{T_{k,\,i}\cap F^t_{k,\,i}\cap B^t_{k,\,i}}\cdots
=:\frac{C_{(\Phi,\,M)}}{\lz_{k,\,i}}\mathrm{I}_{k,\,i}(x)
+\frac{C_{(\Phi,\,M)}}{\lz_{k,\,i}}\mathrm{J}_{k,\,i}(x).
\end{align}
It was proved in \cite[Lemma 3.1]{bdl1} that, for any $k,\,i\in\nn$ and $x\in\cx$,
$|\mathrm{I}_{k,\,i}(x)|+|\mathrm{J}_{k,\,i}(x)|\ls2^k$, which, combined with \eqref{2.37}, implies that
$$\lf\|L^Mb_{k,\,i}\r\|_{L^\fz(\cx)}\ls\|\mathbf{1}_{B_{k,\,i}}\|^{-1}_{L^\fai(\cx)}\ls\lf\|\mathbf{1}_{8B_{k,\,i}}\r\|^{-1}_{L^\fai(\cx)}.
$$
From this and \eqref{2.36}, we deduce that there exists a positive constant $\wz{C}_2$ such that,
for any $k,\,i\in\nn$ and $j\in\{0,\,1,\,\ldots,\,M\}$,
$$\|L^jb_{k,\,i}\|_{L^\fz(\cx)}
\le\wz{C}_2(8r_{B_{k,\,i}})^{2(M-j)}\lf\|\mathbf{1}_{8B_{k,\,i}}\r\|^{-1}_{L^\fai(\cx)}.$$
Thus, \eqref{2.34} holds true.
For any $k,\,i\in\nn$, let
$$\wz{b}_{k,\,i}:=b_{k,\,i}/\wz{C}_2,\ \ \wz{\az}_{k,\,i}:=\az_{k,\,i}/\wz{C}_2\ \
\text{and}\ \ \wz{\lz}_{k,\,i}:=\wz{C}_2\lz_{k,\,i}=\wz{C}_2 2^k\|\mathbf{1}_{B_{k,\,i}}\|_{L^\fai(\cx)}.$$
Then, for any $k,\,i\in\nn$, $\wz{\az}_{k,\,i}$ is a $(\fai,\,\fz,\,M)_L$-atom associated with the ball $8B_{k,\,i}$ and
\begin{align}\label{2.38}
\sum_{k\in\nn}f_{1,\,k}=\sum_{k,\,i\in\nn}\wz{\lz}_{k,\,i}\wz{\az}_{k,\,i}.
\end{align}
Moreover, by Lemma \ref{l2.2}(iv), we find that, for any $\lz\in(0,\fz)$,
\begin{align}\label{2.39}
\sum_{k,\,i\in\nn}\fai\lf(8B_{k,\,i},\frac{\wz{\lz}_{k,\,i}}{\lz\|\mathbf{1}_{8B_{k,\,i}}\|_{L^\fai(\cx)}}\r)
&\ls\sum_{k,\,i\in\nn}\fai\lf(\frac{B_{k,\,i}}{5},\frac{2^k}{\lz}\r)\sim
\sum_{k\in\nn}\fai\lf(\boz_{k},\frac{2^k}{\lz}\r).
\end{align}
Furthermore, similarly to the proof of \cite[Lemma 3.4]{yys4} (see also \cite[Lemma 5.4]{k14}), we conclude that
$$
\sum_{k\in\nn}\fai\lf(\boz_{k},\frac{2^k}{\lz}\r)
\ls\int_{\cx}\fai\lf(x,\frac{\cm_L(f)(x)}{\lz}\r)\,d\mu(x),$$
which, together with \eqref{2.39}, \eqref{2.25} and \eqref{2.33}, implies that, for any $\lz\in(0,\fz)$,
\begin{align}\label{2.40}
&\fai\lf(\cx,\frac{\lz_{0,\,1}}{\lz\|\mathbf{1}_{\cx}\|_{L^\fai(\cx)}}\r)+
\fai\lf(\cx,\frac{\lz_{1,\,0}}{\lz\|\mathbf{1}_{\cx}\|_{L^\fai(\cx)}}\r)
+\sum_{k,\,i\in\nn}\fai\lf(8B_{k,\,i},\frac{\wz{\lz}_{k,\,i}}{\lz\|\mathbf{1}_{8B_{k,\,i}}\|_{L^\fai(\cx)}}\r)\\ \nonumber
&\hs\ls\int_{\cx}\fai\lf(x,\frac{\cm_L(f)(x)}{\lz}\r)\,d\mu(x).
\end{align}
Moreover, from \eqref{2.23}, \eqref{2.27} and \eqref{2.38}, we deduce that
$$f=f_1+f_2=\lz_{0,\,1}\az_{0,\,1}+\lz_{1,\,0}\az_{1,\,0}+\sum_{k,\,i\in\nn}\wz{\lz}_{k,\,i}\wz{\az}_{k,\,i}.
$$
By this and \eqref{2.40},
we further conclude that $f\in H^{M,\,\fz}_{\fai,\,L,\,\mathrm{at}}(\cx)\subset H^{M,\,q}_{\fai,\,L,\,\mathrm{at}}(\cx)$
with $q\in(1,\fz]$ and
$$\|f\|_{H^{M,\,q}_{\fai,\,L,\,\mathrm{at}}(\cx)}\le\|f\|_{H^{M,\,\fz}_{\fai,\,L,\,\mathrm{at}}(\cx)}\ls \|f\|_{H_{\fai,\,L,\,\mathrm{max}}(\cx)},
$$
namely, \eqref{2.22} and hence \eqref{2.21} hold true when $\mu(\cx)<\fz$.

Now we assume that $\mu(\cx)=\fz$. In this case, we have
$$
f=C_{(\Phi,\,M)}\int_0^{\fz}(t\sqrt{L})^{2M}\Phi(t\sqrt{L})\Phi(t\sqrt{L})(f)\frac{dt}{t}.
$$
Similarly to \eqref{2.38} and \eqref{2.40}, we know that there exist a sequence $\{\az_{k,\,i}\}_{k\in\zz,\,i\in\nn}$ of $(\fai,\fz,M)_L$-atoms and $\{\lz_{k,\,i}\}_{k\in\zz,\,i\in\nn}\subset\cc$ such that
$$f=\sum_{k\in\zz,\,i\in\nn}\lz_{k,\,i}\az_{k,\,i}
$$
and, for any $q\in(1,\fz]$,
$$\|f\|_{H^{M,\,q}_{\fai,\,L,\,\mathrm{at}}(\cx)}\ls \|f\|_{H_{\fai,\,L,\,\mathrm{max}}(\cx)}.$$
Thus, \eqref{2.22} and hence \eqref{2.21} hold true when $\mu(\cx)=\fz$.

Therefore, from \eqref{2.14}, \eqref{2.15} and \eqref{2.21}, it follows that,
for any $M\in\nn$ with $M>nq(\fai)/(2i(\fai))$ and $q\in([r(\fai)]'I(\fai),\fz]\cap(1,\fz]$,
$$\lf[H^{\phi,\,\az}_{\fai,\,L,\,\mathrm{max}}(\cx)\cap L^2(\cx)\r]
= \lf[H^{\phi}_{\fai,\,L,\,\mathrm{rad}}(\cx)\cap L^2(\cx)\r]=
\lf[H^{M,\,q}_{\fai,\,L,\,\mathrm{at}}(\cx)\cap L^2(\cx)\r]
$$
with equivalent quasi-norms. By this, the fact that the spaces
$H^{M,\,q}_{\fai,\,L,\,\mathrm{at}}(\cx)\cap L^2(\cx)$,
$H^{\phi,\,\az}_{\fai,\,L,\,\mathrm{max}}(\cx)\cap L^2(\cx)$ and
$H^{\phi}_{\fai,\,L,\,\mathrm{rad}}(\cx)\cap L^2(\cx)$ are
dense, respectively, in the spaces
$H^{M,\,q}_{\fai,\,L,\,\mathrm{at}}(\cx)$,
$H^{\phi,\,\az}_{\fai,\,L,\,\mathrm{max}}(\cx)$ and $H^{\phi}_{\fai,\,L,\,\mathrm{rad}}(\cx)$, and a
density argument, we conclude that the spaces $H^{M,\,q}_{\fai,\,L,\,\mathrm{at}}(\cx)$,
$H^{\phi,\,\az}_{\fai,\,L,\,\mathrm{max}}(\cx)$ and $H^{\phi}_{\fai,\,L,\,\mathrm{rad}}(\cx)$
coincide with equivalent quasi-norms, which, together with Proposition \ref{p2.2}, further implies that,
for any $M\in\nn$ with $M>nq(\fai)/(2i(\fai))$ and $q\in([r(\fai)]'I(\fai),\fz]\cap(1,\fz]$,
the spaces $H_{\fai,\,L}(\cx)$, $H^{M,\,q}_{\fai,\,L,\,\mathrm{at}}(\cx)$,
$H^{\phi,\,\az}_{\fai,\,L,\,\mathrm{max}}(\cx)$ and $H^{\phi}_{\fai,\,L,\,\mathrm{rad}}(\cx)$
coincide with equivalent quasi-norms.
This finishes the proof of Theorem \ref{t1.1}.
\end{proof}

\section{Proof of Theorem \ref{t1.3}\label{s3}}

In this section, we give the proof of Theorem \ref{t1.3}.

\begin{proof}[Proof of Theorem \ref{t1.3}]
Assume that $nq(\fai)/i(\fai)<n+\dz_0$, where $\dz_0\in(0,1]$ is as in Assumption \ref{a3}.
Let $q\in(q(\fai)[r(\fai)]',\fz]\cap(1,\fz]$ and $M\in\nn$ satisfy $M>nq(\fai)/[2i(\fai)]$.
Using Theorems \ref{t1.1} and \ref{t1.2}, to show Theorem \ref{t1.3},
we only need to show that
the spaces $H^{M,\,q}_{\fai,\,L,\,\mathrm{at}}(\cx)=H^{q}_{\fai,\,\mathrm{at}}(\cx)$
coincide with the equivalent quasi-norm.

We first show that
\begin{equation}\label{3.1}
\lf[H^{M,\,q}_{\fai,\,L,\,\mathrm{at}}(\cx)\cap L^2(\cx)\r]\subset\lf[H^{q}_{\fai,\,\mathrm{at}}(\cx)\cap L^2(\cx)\r].
\end{equation}
Let $f\in H^{M,\,q}_{\fai,\,L,\,\mathrm{at}}(\cx)\cap L^2(\cx)$.
If $\mu(\cx)<\fz$, by Definition \ref{d1.4}, we know that there exist $\{\lz_0\}\cup\{\lz_j\}_{j=1}^\fz\subset\cc$,
a $(\fai,\,q)$-single-atom and a sequence $\{\az_j\}_{j=1}^\fz$
of $(\fai,\,q,\,M)_L$-atoms associated, respectively, with the balls $\{B_j\}_{j=1}^\fz$ such that
\begin{equation}\label{3.2}
f=\lz_0\az_0+\sum_{j=1}^\fz\lz_j\az_j\ \text{in}\ L^2(\cx) \ \text{and} \ \
\|f\|_{H^{M,\,q}_{\fai,\,L,\,\mathrm{at}}(\cx)}\sim\blz\lf(\{\lz_0\az_0\}\bigcup\{\lz_j\az_j\}_{j=1}^\fz\r).
\end{equation}
Let $\az$ be a $(\fai,\,q,\,M)_L$-atom  associated with the ball $B$.
From the definition of $(\fai,\,q,\,M)_L$-atoms, we deduce that
there exists $b\in D(L)$ such that $\az=Lb$,
which, combined with Assumption \ref{a4} and an argument similar to that used in the proof of \cite[Theorem 9.1]{hlmmy},
further implies that
\begin{equation*}
\int_\cx\az(x)\,d\mu(x)=0.
\end{equation*}
By this, we know that $\az$ is a $(\fai,\,q,\,0)$-atom,
which, together with \eqref{3.2}, implies that $f\in
H^{q}_{\fai,\,\mathrm{at}}(\cx)\cap L^2(\cx)$ and
\begin{equation}\label{3.3}
\|f\|_{H^{q}_{\fai,\,\mathrm{at}}(\cx)}\ls\|f\|_{H^{M,\,q}_{\fai,\,L,\,\mathrm{at}}(\cx)}.
\end{equation}
When $\mu(\cx)=\fz$, similarly to \eqref{3.3}, we conclude that \eqref{3.3} also holds true in this case.
Thus, \eqref{3.1} holds true.

Now we prove that
\begin{equation}\label{3.4}
\lf[H^{q}_{\fai,\,\mathrm{at}}(\cx)\cap L^2(\cx)\r]\subset\lf[H^{M,\,q}_{\fai,\,L,\,\mathrm{at}}(\cx)\cap L^2(\cx)\r].
\end{equation}
Let $f\in H^{q}_{\fai,\,\mathrm{at}}(\cx)\cap L^2(\cx)$.
If $\mu(\cx)<\fz$, by Definition \ref{d1.7}, we know that there exist $\{\lz_0\}\cup\{\lz_j\}_{j=1}^\fz\subset\cc$,
a $(\fai,\,q)$-single-atom and a sequence $\{\az_j\}_{j=1}^\fz$
of $(\fai,\,q,\,0)$-atoms associated, respectively, with the balls $\{B_j\}_{j=1}^\fz$ such that
\begin{equation}\label{3.5}
f=\lz_0\az_0+\sum_{j=1}^\fz\lz_j\az_j\ \text{in}\ L^2(\cx) \ \text{and} \ \
\|f\|_{H^{q}_{\fai,\,\mathrm{at}}(\cx)}\sim\blz\lf(\{\lz_0\az_0\}\bigcup\{\lz_j\az_j\}_{j=1}^\fz\r).
\end{equation}
Similarly to \eqref{2.16}, we know that, for any $\lz\in\cc$,
\begin{align}\label{3.6}
\int_{\cx}\fai\lf(x,|\lz|(\az_0)^+_{L,\,\loc}(x)\r)\,d\mu(x)\ls\fai\lf(\cx,|\lz|\|\mathbf{1}_{\cx}\|_{L^\fai(\cx)}^{-1}\r).
\end{align}
Let $\az$ be a $(\fai,\,q,\,0)$-atom associated with the ball $B:=B(x_0,r_0)$, where $x_0\in\cx$ and $r_0\in(0,\fz)$.
By using Assumption \ref{a3} and $\int_\cx\az(x)\,d\mu(x)=0$ and then similarly to the proof of
\eqref{2.17}, we conclude that, for any $\lz\in\cc$,
\begin{align*}
\int_{\cx}\fai\lf(x,|\lz|\az^+_{L,\,\loc}(x)\r)\,d\mu(x)\ls\fai\lf(B,|\lz|\|\mathbf{1}_{B}\|^{-1}_{L^\fai(\cx)}\r),
\end{align*}
where the implicit positive constant depends on $n$ and $\fai$,
which, combined with \eqref{3.5} and \eqref{3.6}, implies that, for any $\lz\in(0,\fz)$,
\begin{align*}
\int_\cx\fai\lf(x,\frac{f^+_{L,\,\loc}(x)}{\lz}\r)\,d\mu(x)
&\ls\int_\cx\fai\lf(x,\frac{|\lz_0|(\az_0)_{L,\,\loc}^+(x)}{\lz}\r)\,d\mu(x)+\sum_{j=1}^\fz\int_\cx
\fai\lf(x,\frac{|\lz_j|(\az_j)_{L,\,\loc}^+(x)}{\lz}\r)\,d\mu(x)\\
&\ls\fai\lf(\cx,\frac{|\lz_0|}{\lz\|\mathbf{1}_{\cx}\|_{L^\fai(\cx)}}\r)+
\sum_{j=1}^\fz\fai\lf(B_j,\frac{|\lz_j|}{\lz\|\mathbf{1}_{B_j}\|_{L^\fai(\cx)}}\r).
\end{align*}
From this, it follows that $f\in [h_{\fai,\,L,\,\mathrm{rad}}(\cx)\cap L^2(\cx)]$ and
$$\|f\|_{h_{\fai,\,L,\,\mathrm{rad}}(\cx)}\ls\|f\|_{H^{q}_{\fai,\,\mathrm{at}}(\cx)},$$
which, together with Theorem \ref{t1.2}, further implies that
$f\in [H^{M,\,q}_{\fai,\,L,\,\mathrm{at}}(\cx)\cap L^2(\cx)]$ and
\begin{align}\label{3.7}
\|f\|_{H^{M,\,q}_{\fai,\,L,\,\mathrm{at}}(\cx)}\ls\|f\|_{H^{q}_{\fai,\,\mathrm{at}}(\cx)}.
\end{align}
By this, we know that \eqref{3.4} holds true in the case that $\mu(\cx)<\fz$.
When $\mu(\cx)=\fz$, via replacing $f^+_{L,\,\loc}$ by $f^+_{L}$ and repeating the proof of \eqref{3.7}, we conclude that \eqref{3.7} also holds true in this case.
Thus, \eqref{3.4} holds true.

From \eqref{3.1} and \eqref{3.4}, it follows that
$$\lf[H^{q}_{\fai,\,\mathrm{at}}(\cx)\cap L^2(\cx)\r]=\lf[H^{M,\,q}_{\fai,\,L,\,\mathrm{at}}(\cx)\cap L^2(\cx)\r],$$
which, combined with the fact that the spaces
$H^{q}_{\fai,\,\mathrm{at}}(\cx)\cap L^2(\cx)$ and
$H^{M,\,q}_{\fai,\,L,\,\mathrm{at}}(\cx)\cap L^2(\cx)$ are
dense, respectively, in the spaces
$H^{q}_{\fai,\,\mathrm{at}}(\cx)$ and $H^{M,\,q}_{\fai,\,L,\,\mathrm{at}}(\cx)$, and a
density argument, implies that the spaces $H^{q}_{\fai,\,\mathrm{at}}(\cx)$
and $H^{M,\,q}_{\fai,\,L,\,\mathrm{at}}(\cx)$ coincide with equivalent quasi-norms.
This finishes the proof of Theorem \ref{t1.3}.
\end{proof}

\section{\!\!\!\!Applications to Musielak-Orlicz-Hardy spaces on Lipschitz domains\label{s4}}

 In this section, we always assume that
$\boz$ is a \emph{strongly Lipschitz domain} of $\rn$,
namely, $\boz$ is a proper open connected set of
$\rn$ whose boundary is a finite union of parts of rotated graphs of
Lipschitz maps, at most one of these parts possibly unbounded. It is
well known that strongly Lipschitz domains include special
Lipschitz domains, bounded Lipschitz domains and exterior domains; see, for example,
\cite{ar03,at01a} for their definitions and properties.
Moreover, it is worth pointing out that strongly Lipschitz domains in $\rn$ are spaces of
homogeneous type (see, for example, \cite{ar03,yys2,yys3}).

Now we describe the divergence form elliptic operators considered in
this section. The most typical example is the Laplace operator on bounded Lipschitz domains of
$\rn$ with the Dirichlet or the Neumann boundary condition.
If $\boz$ is either $\rn$ or a strongly Lipschitz domain of $\rn$, we denote by
$W^{1,\,2}(\boz)$ the usual \emph{Sobolev space on $\boz$} equipped
with the norm $(\|f\|^2_{L^2 (\boz)}+\|\nabla f\|^2_{L^2
(\boz)})^{1/2}$, where $\nabla f$ denotes the \emph{distributional
gradient} of $f$. In what follows, $W^{1,\,2}_0 (\boz)$ stands for
the \emph{closure} of $C^{\fz}_c (\boz)$ in $W^{1,\,2}(\boz)$, where
$C^{\fz}_c (\boz)$ denotes the set of all \emph{$C^\fz$ functions on
$\boz$ with compact supports contained in $\boz$}.

If $A:\ \rn\to M_n (\cc)$ is a measurable function, define
$$\|A\|_{\fz}:=\esup_{x\in\rn,\,|\xi|=|\eta|=1}|A(x)\xi
\cdot\ol{\eta}|,$$
where $M_n (\cc)$ denotes the \emph{set of all $n\times
n$ complex-valued matrixes}, $\xi,\,\eta\in\cc^n$ and $\ol{\eta}$
denotes the \emph{conjugate vector} of $\eta$. For any $\dz\in(0,1]$,
denote by the \emph{symbol $\ca(\dz)$} the class of all measurable
functions $A:\ \rn\to M_n (\cc)$ satisfying the ellipticity condition, namely, for
any $x\in\rn$ and $\xi\in\cc^n$,
\begin{equation*}
\|A\|_{\fz}\le \dz^{-1} \ \text{and}\ \Re
(A(x)\xi\cdot\xi)\ge\dz|\xi|^2,
\end{equation*}
here and hereafter, $\Re (A(x)\xi\cdot\xi)$ denotes the \emph{real
part} of $ A(x)\xi\cdot\xi$. Denote by $\ca$
the \emph{union of all $\ca(\dz)$ for any $\dz\in(0,1]$}.

When $A\in\ca$ and $V$ is a closed subspace of $W^{1,\,2}(\boz)$
containing $W^{1,\,2}_0 (\boz)$, denote by $L$ the {\it
maximal-accretive operator} (see \cite[p.\,23, Definition 1.46]{o04}
for its definition) on $L^2 (\boz)$ with largest domain
$D(L)\subset V$ such that, for any $f\in D(L)$ and $g\in V$,
\begin{equation}\label{4.1}
\langle Lf,g\rangle=\int_{\boz}A(x)\nabla f(x)\cdot\ol{\nabla
g(x)}\,dx,
\end{equation}
where $\langle \cdot,\cdot\rangle$ denotes the \emph{interior product} in
$L^2 (\boz)$. In this sense, for any $f\in D(L)$, we write
\begin{equation}\label{4.2}
Lf:=-\divz(A\nabla f).
\end{equation}

We recall the following Dirichlet and Neumann boundary conditions
of $L$ from \cite[p.\,152]{ar03}.

\begin{definition}\label{d4.1}
Let $\boz$ be either $\rn$ or a strongly Lipschitz domain
of $\rn$, and $L$ as in \eqref{4.2}. The operator $L$ is said
to satisfy the \emph{Dirichlet boundary condition} (for simplicity,
DBC) if $V:= W^{1,\,2}_0(\boz)$ in \eqref{4.1} and the \emph{Neumann boundary condition}
(for simplicity, NBC) if $V:= W^{1,\,2}(\boz)$ in \eqref{4.1}.
Denote by $L_D$ and $L_N$ the second-order elliptic operator with the Dirichlet and the Neumann
boundary condition on the strongly Lipschitz domain $\boz$, respectively.
\end{definition}

We point out that, when $\boz:=\rn$, $W^{1,\,2}_0
(\boz)=W^{1,\,2}(\boz)$. Thus, in this case, DBC and NBC are
identical.

Denote by $\cs(\rn)$ the space of all Schwartz functions equipped
with the well-known topology determined by a countable family of
seminorms and by $\cs'(\rn)$ its \emph{dual space} equipped
with the weak-$\ast$ topology (namely, the space of all
\emph{tempered distributions}).
For any $m\in\nn$, define
$$\cs_m(\rn):=\lf\{\phi\in\cs(\rn):\ \sup_{x\in\rn}\sup_{
\bz\in\zz^n_+,\,|\bz|\le m+1}(1+|x|)^{(m+2)(n+1)}|\partial^\bz_x\phi(x)|\le1\r\}.
$$
Then, for any $f\in\cs'(\rn)$, the \emph{non-tangential grand maximal function} $f^\ast_m$
of $f$ is defined by setting, for any $x\in\rn$,
$$f^\ast_m(x):=\sup_{\phi\in\cs_m(\rn)}\sup_{|y-x|<t,\,t\in(0,\fz)}|f\ast\phi_t(y)|,
$$
where, for any $t\in(0,\fz)$, $\phi_t(\cdot):=t^{-n}\phi(\frac{\cdot}{t})$.
When $m(\fai):=\lfz n[q(\fai)/i(\fai)-1]\rfz$, where $q(\fai)$ and $i(\fai)$ are,
respectively, as in \eqref{1.6} and \eqref{1.5}, we denote $f^\ast_{m(\fai)}$ simply by $f^\ast$.

Let $\cd(\boz)$ denote the \emph{space of all
infinitely differentiable functions with compact supports in $\boz$}
equipped with the inductive topology and $\cd'(\boz)$ its
\emph{topological dual} equipped with the weak-$\ast$
topology, which is called the \emph{space of distributions on $\boz$}.

Now we recall the notion of the Musielak-Orlicz-Hardy space $H_\fai(\rn)$ introduced by Ky \cite{k14} and then introduce the ``geometric" Musielak-Orlicz-Hardy spaces $H_{\fai,\,r}(\boz)$
and $H_{\fai,\,z}(\boz)$ on domains, respectively, as follows.

\begin{definition}\label{d4.2}
Let $\fai$ be as in Definition \ref{d1.2} and $\boz$ a subdomain in $\rn$.

\begin{itemize}
\item[(i)] The \emph{Musielak-Orlicz Hardy space
$H_\fai(\rn)$} is defined as the set of all $f\in\cs'(\rn)$ such that
$f^\ast\in L^\fai(\rn)$
equipped with the \emph{quasi-norm} $\|f\|_{H_\fai(\rn)}:=\|f^\ast\|_{L^\fai(\rn)}$.
\item[(ii)] The \emph{Musielak-Orlicz-Hardy space
$H_{\fai,\,z}(\boz)$} is defined by setting
\begin{equation*}
H_{\fai,\,z}(\boz):=\lf\{f\in H_\fai(\rn):\ f=0\ \text{on}\
(\ol{\boz})^\complement\r\}/\{f\in H_\fai(\rn):\ f=0\ \text{on}\
\boz\},
\end{equation*}
where $\ol{\boz}$ denotes the closure of $\boz$ in $\rn$. Moreover, the \emph{quasi-norm} of the element in $H_{\fai,\,z}(\boz)$ is defined to be the quotient norm.
\item[(iii)] A distribution $f$ on $\boz$ is said to belong to the
\emph{Musielak-Orlicz-Hardy space $H_{\fai,\,r}(\boz)$} if $f$ is the restriction to $\boz$ of a distribution $F$ in $H_{\fai}(\rn)$, namely,
\begin{align*}
H_{\fai,\,r}(\boz):=&\,\{f\in \mathcal{D}'(\boz):\ \text{there exists an}
\ F\in H_{\fai}(\rn)\ \text{such that}\ F|_{\boz}=f\}\\
=&\,H_{\fai}(\rn)/\{F\in H_{\fai}(\rn):\ F=0\ \text{on}\ \boz\}.
\end{align*}
Moreover, for any $f\in H_{\fai,\,r}(\boz)$, the \emph{quasi-norm}
$\|f\|_{H_{\fai,\,r}(\boz)}$ of $f$ in $H_{\fai,\,r}(\boz)$
is defined by setting
$$\|f\|_{H_{\fai,\,r}(\boz)}:=\inf\lf\{\|F\|_{H_{\fai}(\rn)}:\
F\in H_{\fai}(\rn)\ \text{and}\ F|_{\boz}=f\r\},$$
where the infimum is taken over all $F\in H_{\fai}(\rn)$ satisfying
$F=f$ on $\boz$.
\end{itemize}
\end{definition}

When $\fai(x,t):=t^p$, with $p\in(0,1]$,
for any $x\in\rn$ and $t\in[0,\fz)$, the Hardy spaces $H^p_r (\boz)$ and $H^p_z (\boz)$
on the domain $\boz$ were introduced and studied by Chang et al.
\cite{cds99} and Chang et al. \cite{cks92,cks93}. We point out that the Hardy spaces $H^p_r (\boz)$ and $H^p_z (\boz)$ naturally appear in the study of the regularity of the Green operators
for the Dirichlet boundary problem, respectively, for the Neumann
boundary problem (see, for example, \cite{cks93,cds99,dhmmy}).
Moreover, when $\fai$ is as in \eqref{1.8}, the Orlicz-Hardy spaces
$H_{\Phi,\,r}(\boz)$ and $H_{\Phi,\,z}(\boz)$ and the weighted local Orlicz-Hardy spaces
$h^\Phi_{\omega,\,r}(\boz)$ and $h^\Phi_{\omega,\,z}(\boz)$
were studied in \cite{ccyy13,yys2,yys3}. Furthermore, the ``geometric" Musielak-Orlicz-Hardy
space $H_{\fai,\,L,\,r}(\boz)$ on strongly Lipschitz domains associated with the Schr\"odinger
operator was studied in \cite{cfyy16}.

Let $\boz$ be a strongly Lipschitz domain in $\rn$ and $L_D$ a second-order elliptic operator with the real bounded coefficients and the Dirichlet boundary condition on $\boz$.
Denote by $\{K_t\}_{t>0}$ the kernels of the semigroup $\{e^{-tL_D}\}_{t>0}$.
By \cite[Corollary 3.2.8]{d89} (see also \cite{at01a}), we know that there exist
positive constants $C$ and $c$ such that, for any $t\in(0,\fz)$ and $x,\,y\in\boz$,
\begin{equation}\label{4.3}
|K_t(x,y)|\ls\frac{1}{t^{n/2}}\exp\lf\{-\frac{|x-y|^2}{ct}\r\}\le
\frac{C}{|B_\boz(x,\sqrt{t})|}\exp\lf\{-\frac{|x-y|^2}{ct}\r\},
\end{equation}
where $B_\boz(x,\sqrt{t}):=B(x,\sqrt{t})\cap\boz$, which implies that $L_D$ satisfies Assumption \ref{a2}.
Moreover, from \cite[Proposition 5]{at01a}, it follows that
there exist positive constants $C$, $c$ and $\dz_1\in(0,1]$ such that,
for any $t\in(0,\fz)$ and $x,\,y_1,\,y_2\in\boz$ with $|y_1-y_2|<\sqrt{t}/2$,
\begin{equation}\label{4.4}
|K_t(x,y_1)-K_t(x,y_2)|\le\frac{C}{|B_\boz(x,\sqrt{t})|}
\lf[\frac{|y_1-y_2|}{\sqrt{t}}\r]^{\dz_1}
\exp\lf\{-\frac{|x-y|^2}{ct}\r\}
\end{equation}
(see also \cite{ar03}).

Assume further that $L_D$ has symmetrical coefficients. Then $L_D$ satisfies Assumptions \ref{a1} and \ref{a2}.

For $H_{\fai,\,r}(\boz)$, we have the following equivalent characterizations via the (local)
Musielak-Orlicz-Hardy spaces on $\boz$ associated with $L_D$.

\begin{theorem}\label{t4.1}
Let $\boz$ be a strongly Lipschitz domain in $\rn$ such that $\boz^\complement$
is unbounded, and $\fai$ as in Definition \ref{d1.2}.
Assume that $L_D$ is a second-order self-adjoint elliptic operator on $\boz$, with the Dirichlet boundary condition,
satisfying \eqref{4.3} and \eqref{4.4}. Let $\dz_1\in(0,1]$, $r(\fai)$, $I(\fai)$, $q(\fai)$
and $i(\fai)$ be, respectively, as in \eqref{4.4}, \eqref{1.7}, \eqref{1.4}, \eqref{1.6} and \eqref{1.5}.
If $nq(\fai)/i(\fai)<n+\dz_1$, then, for any $q\in([r(\fai)]'I(\fai),\fz]\cap(1,\fz]$ and
$M\in\nn\cap(\frac{nq(\fai)}{2i(\fai)},\fz)$,
the Musielak-Orlicz-Hardy spaces $H_{\fai,\,L_D}(\boz)$, $H^{M,\,q}_{\fai,\,L_D,\,\mathrm{at}}(\boz)$,
$H_{\fai,\,L_D,\,\mathrm{max}}(\boz)$, $H_{\fai,\,L_D,\,\mathrm{rad}}(\boz)$
and $H_{\fai,\,r}(\boz)$ coincide with equivalent quasi-norms. In particular, if $\boz$
is bounded, then the spaces
$$H_{\fai,\,L_D}(\boz),\ H^{M,\,q}_{\fai,\,L_D,\,\mathrm{at}}(\boz),\
H_{\fai,\,L_D,\,\mathrm{max}}(\boz),\ H_{\fai,\,L_D,\,\mathrm{rad}}(\boz),\ h_{\fai,\,L_D,\,\mathrm{max}}(\boz),\ h_{\fai,\,L_D,\,\mathrm{rad}}(\boz)$$
and $H_{\fai,\,r}(\boz)$ coincide with equivalent quasi-norms.
\end{theorem}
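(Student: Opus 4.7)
My plan is to reduce the equivalences among the operator-associated spaces to Theorems \ref{t1.1} and \ref{t1.2}, and then to identify $H^{M,\,q}_{\fai,\,L_D,\,\mathrm{at}}(\boz)$ with $H_{\fai,\,r}(\boz)$ by comparing atomic decompositions. First I observe that $\boz$, equipped with the Euclidean metric and the restriction of Lebesgue measure, is a space of homogeneous type; since the coefficient matrix is real, bounded, symmetric and elliptic, the operator $L_D$ is non-negative and self-adjoint on $L^2(\boz)$ (hence Assumption \ref{a1} holds), while \eqref{4.3} gives Assumption \ref{a2}. Applying Theorem \ref{t1.1} to $L_D$ on $\boz$ therefore yields the mutual equivalence of $H_{\fai,\,L_D}(\boz)$, $H^{M,\,q}_{\fai,\,L_D,\,\mathrm{at}}(\boz)$, $H_{\fai,\,L_D,\,\mathrm{max}}(\boz)$ and $H_{\fai,\,L_D,\,\mathrm{rad}}(\boz)$. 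When $\boz$ is bounded, $\mu(\boz)<\fz$, so Theorem \ref{t1.2} additionally gives the equivalence with the local spaces $h_{\fai,\,L_D,\,\mathrm{max}}(\boz)$ and $h_{\fai,\,L_D,\,\mathrm{rad}}(\boz)$. Note that Assumption \ref{a4} is \emph{not} available for $L_D$ and Theorem \ref{t1.3} does not apply here; $H_{\fai,\,r}(\boz)$ must be reached by a different route.

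It remains to establish $H^{M,\,q}_{\fai,\,L_D,\,\mathrm{at}}(\boz)=H_{\fai,\,r}(\boz)$ with equivalent quasi-norms, which I would do by matching atoms. For the inclusion $H^{M,\,q}_{\fai,\,L_D,\,\mathrm{at}}(\boz)\subset H_{\fai,\,r}(\boz)$, I take a $(\fai,\,q,\,M)_{L_D}$-atom $\az=L_D^M b$ with $\supp(L_D^j b)\subset B\cap\boz$ for $j=0,1,\ldots,M$. The Dirichlet condition forces $L_D^{M-1}b\in W^{1,\,2}_0(\boz)$, so its zero extension to $\rn$ lies in $W^{1,\,2}(\rn)$; combining this with the divergence theorem (when $B\subset\!\subset\boz$) or a boundary-subtraction argument (when $B\cap\partial\boz\ne\emptyset$) shows that the zero extension $\wz\az$ of $\az$ is, up to a constant multiple, a Musielak-Orlicz molecule for $H_\fai(\rn)$ in the sense of Ky \cite{k14,ylk17}. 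Since the hypothesis $nq(\fai)/i(\fai)<n+\dz_1$ ensures that a single vanishing moment suffices for membership in $H_\fai(\rn)$, summing over the atomic decomposition of $f$ yields $f\in H_{\fai,\,r}(\boz)$ with the desired norm estimate. For the reverse inclusion, given $f\in H_{\fai,\,r}(\boz)$ with extension $F\in H_\fai(\rn)$, I apply Ky's atomic decomposition to $F$, restrict each atom to $\boz$, and then use the $L_D$-Calder\'on reproducing formula together with the tent-space decomposition from Lemma \ref{l2.4} to regroup the restricted atoms into $(\fai,\,q,\,M)_{L_D}$-atoms (plus a $(\fai,\,q)$-single-atom when $\mu(\boz)<\fz$).

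The main obstacle is the treatment of atoms whose supports touch $\partial\boz$. A $(\fai,\,q,\,M)_{L_D}$-atom supported in a ball that meets $\partial\boz$ need not have vanishing moments on $\rn$ after zero extension, and an $H_\fai(\rn)$-atom whose ball crosses $\partial\boz$ must be adjusted so that its restriction respects the $L_D$-structure. Both difficulties are resolved by a reflection/subtraction procedure across $\partial\boz$ (analogous to the one used in \cite{ar03,yys2,yys3} but adapted to the Musielak-Orlicz setting), which is viable precisely because $\boz^\complement$ is unbounded: the subtracted boundary-trace piece can be placed in $\boz^\complement$ without inflating the Musielak-Orlicz norm controlled by $\|\mathbf{1}_B\|_{L^\fai(\rn)}^{-1}$. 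In the bounded case (where $\boz^\complement$ is automatically unbounded), Theorem \ref{t1.2} further streamlines matters by allowing the boundary-type contributions to be absorbed into a $(\fai,\,q)$-single-atom that requires no cancellation at all; this is the crucial technical gain that permits the removal of the ``$\boz$ unbounded'' assumption present in \cite{ar03,bdl1,yys2,yys3}.
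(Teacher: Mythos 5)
Your first step coincides with the paper's: Theorem \ref{t1.1} (and Theorem \ref{t1.2} when $\boz$ is bounded) applied to $\cx:=\boz$ and $L:=L_D$ disposes of all the operator-associated spaces, and your remark that Assumption \ref{a4} fails for $L_D$, so that Theorem \ref{t1.3} is unavailable, is correct. The direction ``operator atoms $\Rightarrow$ $H_{\fai,\,r}(\boz)$'' also matches the paper in substance: interior atoms acquire the vanishing moment from the Dirichlet form (test $\az=L_Db'$ against $\psi\in C^\fz_c(\boz)$ with $\psi\equiv1$ on $2B$), while atoms whose balls approach $\partial\boz$, and the single atom in the bounded case, are split by a Whitney/Besicovitch covering and corrected by subtracting their means onto companion balls of comparable radius placed in $\boz^\complement$; this is exactly where the unboundedness of $\boz^\complement$ is used, and it yields genuine $(\fai,\,\fz,\,0)$-atoms rather than molecules.

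The gap is in the reverse direction, $H_{\fai,\,r}(\boz)\subset H^{M,\,q}_{\fai,\,L_D,\,\mathrm{at}}(\boz)$. Your plan --- restrict Ky's atoms to $\boz$ and ``regroup'' them into $(\fai,\,q,\,M)_{L_D}$-atoms via the reproducing formula and Lemma \ref{l2.4} --- does not work as stated: a restricted classical atom is not of the form $L_D^Mb$ with $b$ supported in the same ball, and Lemma \ref{l2.4} decomposes elements of the tent space, not of $L^2(\boz)$, so no regrouping mechanism is available. What is actually needed (and what the paper proves as \eqref{4.7}) is the estimate
\begin{equation*}
\int_{\boz}\fai\lf(x,\frac{(b)^+_{L_D}(x)}{\lz}\r)\,dx\ls\fai\lf(B_0,\frac{\|b\|_{L^\fz(B_0)}}{\lz}\r)
\end{equation*}
for every constant multiple $b$ of a $(\fai,\,\fz,\,0)$-atom supported in $B_0$; one then sums this over Ky's decomposition of an extension $\wz f$ of $f$, concludes $f\in H_{\fai,\,L_D,\,\mathrm{rad}}(\boz)$, and invokes the first part to land in the atomic space. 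This estimate is the \emph{only} place where the H\"older continuity \eqref{4.4} of the heat kernel and the index condition $nq(\fai)/i(\fai)<n+\dz_1$ enter: the cancellation of $b$ together with \eqref{4.4} gives $(b)^+_{L_D}$ a decay of order $n+\dz_1$ off $B_0$, and $n+\dz_1>nq(\fai)/i(\fai)$ makes that tail summable in $L^\fai(\boz)$. Your proposal instead attributes the condition $nq(\fai)/i(\fai)<n+\dz_1$ to the forward direction (membership of ``molecules'' in $H_\fai(\rn)$), where it is not needed because your zero extensions are compactly supported and, after the companion-ball subtraction, genuine atoms; and it never supplies the maximal-function estimate that the reverse direction actually requires.
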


When $\fai(x,t):=t^p$, with $p\in(0,1]$, for any $x\in\rn$ and $t\in[0,\fz)$,
we denote the spaces $H_{\fai,\,L_D,\,\mathrm{max}}(\boz)$,
$$H_{\fai,\,L_D,\,\mathrm{rad}}(\boz),\ H_{\fai,\,r}(\boz),\ h_{\fai,\,L_D,\,\mathrm{max}}(\boz)\
\mathrm{and}\ h_{\fai,\,L_D,\,\mathrm{rad}}(\boz)$$
simply, respectively, by $H^p_{L_D,\,\mathrm{max}}(\boz)$,
$H^p_{L_D,\,\mathrm{rad}}(\boz)$, $H^p_{r}(\boz)$, $h^p_{L_D,\,\mathrm{max}}(\boz)$ and
$h^p_{L_D,\,\mathrm{rad}}(\boz)$. Then, as the corollary of Theorem \ref{t4.1}, we have the following conclusion.

\begin{corollary}\label{c4.1}
Let $\boz$ be a strongly Lipschitz domain in $\rn$ such that $\boz^\complement$
is unbounded.
Assume that $L_D$ is a second-order self-adjoint elliptic operator on $\boz$, with the Dirichlet boundary condition,
satisfying \eqref{4.3} and \eqref{4.4}. Let $\dz_1\in(0,1]$ be as in \eqref{4.4}
and $p\in(\frac{n}{n+\dz_1},1]$.
Then, for any $q\in(1,\fz]$ and $M\in\nn\cap(\frac{n}{2p},\fz)$, the Hardy spaces $H^p_{L_D}(\boz)$, $H^{p,\,M,\,q}_{L_D,\,\mathrm{at}}(\boz)$, $H^p_{L_D,\,\mathrm{max}}(\boz)$, $H^p_{L_D,\,\mathrm{rad}}(\boz)$
and $H^p_{r}(\boz)$ coincide with equivalent quasi-norms. In particular, if $\boz$ is bounded, then the spaces
$$H^p_{L_D}(\boz),\ H^{p,\,M,\,q}_{L_D,\,\mathrm{at}}(\boz),\
H^p_{L_D,\,\mathrm{max}}(\boz),\ H^p_{L_D,\,\mathrm{rad}}(\boz),\ h^p_{L_D,\,\mathrm{max}}(\boz),\ h^p_{L_D,\,\mathrm{rad}}(\boz)$$
and $H^p_{r}(\boz)$ coincide with equivalent quasi-norms.
\end{corollary}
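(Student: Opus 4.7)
The plan is to deduce Corollary~\ref{c4.1} directly from Theorem~\ref{t4.1} by specializing to the growth function $\fai(x,t):=t^p$ for any $x\in\rn$ and $t\in[0,\fz)$, and then reading off the resulting identifications of spaces.

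First, I would verify that this $\fai$ satisfies Definition~\ref{d1.2}: it is a Musielak-Orlicz function independent of the spatial variable, it lies in $\aa_1(\rn)\subset\aa_{\fz}(\rn)$, and, since $p\in(0,1]$, it is automatically of uniformly upper type~$1$ (because $p\le 1$) and of uniformly lower type~$p$. Hence $\fai$ is an admissible growth function and the framework of Theorem~\ref{t4.1} applies.

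Next, I would compute the critical indices attached to this specific $\fai$. Since $t^p$ is an $\aa_1$ weight, $q(\fai)=1$; since $\fai(x,\cdot)=t^p$ is exactly of uniformly lower and upper type~$p$, both $i(\fai)$ and $I(\fai)$ equal~$p$; and since $t^p$ satisfies the uniformly reverse H\"older inequality with every exponent, $r(\fai)=\fz$, so $[r(\fai)]'=1$. Plugging these values into the hypotheses of Theorem~\ref{t4.1}, the condition $nq(\fai)/i(\fai)<n+\dz_1$ becomes $n/p<n+\dz_1$, equivalently $p>n/(n+\dz_1)$, which is precisely the standing assumption of Corollary~\ref{c4.1}; the range $q\in([r(\fai)]'I(\fai),\fz]\cap(1,\fz]$ collapses to $(p,\fz]\cap(1,\fz]=(1,\fz]$ because $p\le 1$; and the integrality condition $M\in\nn\cap(\frac{nq(\fai)}{2i(\fai)},\fz)$ reduces to $M\in\nn\cap(\frac{n}{2p},\fz)$. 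Thus, all quantitative hypotheses of the corollary agree with the specialization of those in Theorem~\ref{t4.1}.

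Finally, I would invoke Theorem~\ref{t4.1} to obtain the equivalence, with equivalent quasi-norms, of the spaces $H_{\fai,\,L_D}(\boz)$, $H^{M,\,q}_{\fai,\,L_D,\,\mathrm{at}}(\boz)$, $H_{\fai,\,L_D,\,\mathrm{max}}(\boz)$, $H_{\fai,\,L_D,\,\mathrm{rad}}(\boz)$ and $H_{\fai,\,r}(\boz)$, together with $h_{\fai,\,L_D,\,\mathrm{max}}(\boz)$ and $h_{\fai,\,L_D,\,\mathrm{rad}}(\boz)$ in the bounded case. Substituting $\fai(x,t)=t^p$ into each of these definitions, and invoking the notational convention fixed just before Corollary~\ref{c1.1}, these quasi-norms become exactly those of $H^p_{L_D}(\boz)$, $H^{p,\,M,\,q}_{L_D,\,\mathrm{at}}(\boz)$, $H^p_{L_D,\,\mathrm{max}}(\boz)$, $H^p_{L_D,\,\mathrm{rad}}(\boz)$, $H^p_r(\boz)$ (and the corresponding local spaces), whence the equivalence is preserved. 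No additional analytic input is required beyond Theorem~\ref{t4.1}; the only ``obstacle,'' really just bookkeeping, is the verification in the preceding paragraph that the indices $q(\fai)$, $i(\fai)$, $I(\fai)$, $r(\fai)$ of the particular weight $t^p$ are consistent with the hypothesis $p\in(n/(n+\dz_1),1]$, which we have already done.
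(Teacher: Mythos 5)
Your proposal is correct and matches the paper's (implicit) argument exactly: the paper derives Corollary~\ref{c4.1} purely by specializing Theorem~\ref{t4.1} to $\fai(x,t):=t^p$, and your computations $q(\fai)=1$, $i(\fai)=I(\fai)=p$, $r(\fai)=\fz$, $[r(\fai)]'=1$ correctly reduce the hypotheses $nq(\fai)/i(\fai)<n+\dz_1$, $q\in([r(\fai)]'I(\fai),\fz]\cap(1,\fz]$ and $M\in\nn\cap(\frac{nq(\fai)}{2i(\fai)},\fz)$ to those of the corollary. Nothing further is needed.
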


\begin{remark}\label{r4.1}
\begin{itemize}
\item[(i)] The equivalences of the spaces $H^p_{L_D}(\boz)$,
$H^p_{L_D,\,\mathrm{max}}(\boz)$, $H^p_{L_D,\,\mathrm{rad}}(\boz)$ and $H^p_{r}(\boz)$ in Corollary \ref{c4.1} were obtained in \cite[Theorem 1, Proposition 5 and Theorem 20]{ar03} (which require $p=1$) and \cite[Theorem 4.4 and Remark 4.5(c)]{bdl1} (which require $p\in(\frac{n}{n+\dz_1},1]$). In particular, when $\boz$ is bounded, the equivalences of the spaces $h^p_{L_D,\,\mathrm{max}}(\boz)$, $h^p_{L_D,\,\mathrm{rad}}(\boz)$ and $H^p_{r}(\boz)$ were obtained in \cite[Remark 3.2, Theorem 4.4 and Remark 4.5]{bdl1}.

\item[(ii)] Let $\Phi$ be an Orlicz function satisfying that $i(\Phi)\in(\frac{n}{n+\dz_1},1]$
and $\Phi$ is of upper type 1. When $\fai(x,t):=\Phi(t)$ for any $x\in\rn$ and $t\in[0,\fz)$,
the equivalences of the spaces $H_{\fai,\,L_D}(\boz)$, $H_{\fai,\,L_D,\,\mathrm{max}}(\boz)$,
and $H_{\fai,\,r}(\boz)$ in Theorem \ref{t4.1} were obtained in
\cite[Theorem 1.9]{yys3}. The equivalence of the spaces
$H_{\fai,\,L_D,\,\mathrm{rad}}(\boz)$ and $H_{\fai,\,r}(\boz)$ is new in this case. Moreover, when $\boz$ is bounded, the equivalences of the spaces $h_{\fai,\,L_D,\,\mathrm{max}}(\boz)$, $h_{\fai,\,L_D,\,\mathrm{rad}}(\boz)$
and $H_{\fai,\,r}(\boz)$ are new even when $\fai(x,t):=\Phi(t)$
for any $x\in\rn$ and $t\in[0,\fz)$.

\item[(iii)] Theorem \ref{t4.1} is new even when $\fai$ is as in \eqref{1.8}
or \eqref{1.9}.
\end{itemize}
\end{remark}

To show Theorem \ref{t4.1}, we need the following atomic characterization of the space
$H_\fai(\rn)$ established by Ky \cite{k14}.

\begin{definition}\label{d4.3}
Let $\fai$ be as in Definition \ref{d1.2} with $\cx$ being replaced by $\rn$.
\begin{itemize}
  \item[(I)] For any ball $B\subset\rn$,
  the \emph{space $L^q_\fai(B)$} with
$q\in[1,\fz]$ is defined to be the set of all measurable functions
$f$ on $\rn$ supported in $B$ such that
\begin{equation*}
\|f\|_{L^q_{\fai}(B)}:=
\begin{cases}\dsup_{t\in (0,\fz)}
\lf[\frac{1}
{\fai(B,t)}\dint_{\rn}|f(x)|^q\fai(x,t)\,dx\r]^{1/q}<\fz,& q\in [1,\fz),\\
\|f\|_{L^{\fz}(B)}<\fz,&q=\fz.
\end{cases}
\end{equation*}
\item[(II)] A triplet $(\fai,\,q,\,s)$ is said to be
\emph{admissible} if $q\in(q(\fai),\fz]$ and $s\in\zz_+$ satisfies that $s\ge\lfz n[\frac{q(\fai)}{i(\fai)}-1]\rfz$. A measurable function $a$ on $\rn$ is called a \emph{$(\fai,\,q,\,s)$-atom} if there exists a ball
$B\subset\rn$ such that
\begin{enumerate}
  \item[(i)] $\supp (a)\subset B$;
  \item[(ii)] $\|a\|_{L^q_{\fai}(B)}\le\|\mathbf{1}_B\|_{L^\fai(\rn)}^{-1}$;
  \item[(iii)] $\int_{\rn}a(x)x^{\az}\,dx=0$ for any
$\az\in\zz_+^n$ with $|\az|\le s$.
\end{enumerate}

  \item[(III)] The \emph{atomic Musielak-Orlicz Hardy space}
$H^{\fai,\,q,\,s}(\rn)$ is defined to be the set of all
$f\in\cs'(\rn)$ satisfying that $f=\sum_jb_j$ in $\cs'(\rn)$,
where $\{b_j\}_j$ is a sequence of multiples of
$(\fai,\,q,\,s)$-atoms with $\supp (b_j)\subset B_j$ and
$\sum_j\fai(B_j,\|b_j\|_{L^q_{\fai}(B_j)})<\fz.$
Moreover, letting
\begin{align*}
&\blz_q(\{b_j\}_j):= \inf\lf\{\lz\in(0,\fz):\ \ \sum_j\fai\lf(B_j,
\frac{\|b_j\|_{L^q_{\fai}(B_j)}}{\lz}\r)\le1\r\},
\end{align*}
the \emph{quasi-norm} $\|f\|_{H^{\fai,\,q,\,s}(\rn)}$ of $f\in H^{\fai,\,q,\,s}(\rn)$ in
$H^{\fai,\,q,\,s}(\rn)$ is defined
by setting
$$\|f\|_{H^{\fai,\,q,\,s}(\rn)}:=\inf\lf\{\blz_q(\{b_j\}_j)\r\},$$
where the infimum is taken over all the
decompositions of $f$ as above.
\end{itemize}
\end{definition}

The following lemma is just \cite[Theorem 3.1]{k14}.

\begin{lemma}\label{l4.1}
Let $\fai$ be as in Definition \ref{d1.2} with $\cx$ being replaced by $\rn$ and $(\fai,\,q,\,s)$
admissible. Then $H_\fai(\rn)=H^{\fai,\,q,\,s}(\rn)$ with
equivalent norms.
\end{lemma}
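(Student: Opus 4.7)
The plan is to establish Lemma \ref{l4.1} by proving the two inclusions $H^{\fai,\,q,\,s}(\rn) \subset H_\fai(\rn)$ and $H_\fai(\rn) \subset H^{\fai,\,q,\,s}(\rn)$ separately, following the classical real-variable strategy of Fefferman--Stein adapted to the Musielak-Orlicz setting via Ky's techniques, using the Muckenhoupt $\aa_\infty(\rn)$ and upper/lower type properties encoded in Definition \ref{d1.2}.

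For the containment $H^{\fai,\,q,\,s}(\rn) \subset H_\fai(\rn)$, I would start from an atomic decomposition $f = \sum_j b_j$ where each $b_j$ is a multiple of a $(\fai,q,s)$-atom supported in a ball $B_j$. The goal is to show $\|f^\ast\|_{L^\fai(\rn)} \ls \blz_q(\{b_j\}_j)$. The key pointwise estimate is that, for any $(\fai,q,s)$-atom $a$ supported in $B$, the grand maximal function $a^\ast$ satisfies a good decay outside $2B$ coming from the moment conditions (condition (iii) of Definition \ref{d4.3}): on $S_j(B)$ with $j\ge 1$ one has $a^\ast(x) \ls 2^{-j(n+s+1)}\|\mathbf 1_B\|_{L^\fai(\rn)}^{-1}$, while on $2B$ one uses the $L^q$--$L^\fai$ boundedness of the Hardy-Littlewood maximal operator (Lemma \ref{l2.2}(iii)) together with the $L^q_\fai(B)$ bound from (ii). Choosing $s\ge\lfz n[q(\fai)/i(\fai)-1]\rfz$ makes the geometric series involved in $\sum_j 2^{-j[(n+s+1)p_0 - nq_0]}$ (with $p_0<i(\fai)$ and $q_0>q(\fai)$) summable, and Lemma \ref{l2.2}(i), (iv) give $\int_{\rn}\fai(x,|\lz| a^\ast(x))\,dx \ls \fai(B, |\lz|\|\mathbf 1_B\|^{-1}_{L^\fai(\rn)})$, which yields the inclusion by subadditivity of $\fai$ in its second argument and definition of the Luxemburg quasi-norm.

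For the harder containment $H_\fai(\rn)\subset H^{\fai,\,q,\,s}(\rn)$, I would adapt the classical Fefferman--Stein/Calder\'on-Zygmund scheme. For each $k\in\zz$, form $\boz_k:=\{x\in\rn:\ f^\ast(x)>2^k\}$, which is open with finite measure. Apply a Whitney-type covering (Lemma \ref{l2.5} in $\rn$) to obtain balls $\{B_{k,i}\}$ with bounded overlap, and construct a smooth partition of unity $\{\eta_{k,i}\}$ subordinate to $\{B_{k,i}\}$. Following Ky's scheme, one then defines good parts $g_k$ and bad parts $b_{k,i}$ via local polynomial projections of degree $\le s$ against the weight $\eta_{k,i}$, so that each $b_{k,i}$ is supported in (a fixed dilate of) $B_{k,i}$, has vanishing moments up to order $s$, and satisfies $\|b_{k,i}\|_{L^\infty(\rn)}\ls 2^k$ (or $\|b_{k,i}\|_{L^q_\fai}\ls 2^k$). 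Telescoping in $k$ produces an atomic representation $f = \sum_{k,i}\lz_{k,i}a_{k,i}$ with atoms $a_{k,i}$ (up to a harmless constant) and coefficients $\lz_{k,i}\sim 2^k\|\mathbf 1_{B_{k,i}}\|_{L^\fai(\rn)}$. The convergence in $\cs'(\rn)$ and the bound $\sum_{k,i}\fai(B_{k,i}, 2^k)\ls \int_{\rn}\fai(x,f^\ast(x))\,dx$ is then obtained as in the proof of \cite[Lemma 3.4]{yys4} (or \cite[Lemma 5.4]{k14}), using the uniform $\aa_\infty$ and lower type properties of $\fai$ together with the inclusion $\boz_k \supset \{f^\ast>2^k\}$.

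The main obstacle will be the second direction, specifically verifying the $L^q_\fai(B_{k,i})$ normalization of the constructed atoms uniformly in $k,i$, and controlling the Musielak-Orlicz ``atomic size'' $\sum_{k,i}\fai(B_{k,i},2^k)$ by $\int_{\rn}\fai(x,f^\ast(x))\,dx$: because $\fai$ depends on the space variable and need not be doubling in $x$, one must invoke both $\fai\in\aa_{q_0}(\rn)$ for some $q_0\in(q(\fai),\fz)$ (to compare $\fai$ on $B_{k,i}$ with its average on slightly smaller sets) and the uniform lower type $p_0\in(0,i(\fai))$ (to sum in $k$ via $\sum_k 2^{kp_0}\mu(\{f^\ast>2^k\})$), and then pass to an integral via Lemma \ref{l2.2}(ii). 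Once this is in hand, combining it with the convergence of $\sum_{k,i}\lz_{k,i}a_{k,i}$ to $f$ in $\cs'(\rn)$ (which follows from standard bounds on the tails of Whitney cubes and the fact that $f^\ast\in L^\fai(\rn)$) completes the proof of Lemma \ref{l4.1}.
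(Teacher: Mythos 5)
The paper gives no proof of this lemma at all: it is quoted verbatim from Ky \cite[Theorem 3.1]{k14}, and your outline --- the moment/decay estimate on the annuli $S_j(B)$ plus the weighted maximal-function bound near $B$ for the inclusion $H^{\fai,\,q,\,s}(\rn)\subset H_\fai(\rn)$, and the level-set Calder\'on--Zygmund/Whitney construction with the counting bound $\sum_{k,\,i}\fai(B_{k,\,i},2^k)\ls\int_{\rn}\fai(x,f^\ast(x))\,dx$ for the converse --- is precisely the strategy of Ky's proof, so it is essentially the same approach as the cited source. (One small caution: in the second direction $\boz_k$ need not have finite Lebesgue measure, since $\fai(\cdot,2^k)$ may decay at infinity; the Euclidean Whitney decomposition only requires $\boz_k$ to be an open proper subset of $\rn$, which is what one should invoke.)
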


Now we prove Theorem \ref{t4.1} by using Theorems \ref{t1.1} and \ref{t1.2} and Lemma \ref{l4.1}.

\begin{proof}[Proof of Theorem \ref{t4.1}]
Via taking $\cx:=\boz$ and $L:=L_D$ in Theorem \ref{t1.1}, we know that
the spaces $H_{\fai,\,L_D}(\boz)$, $H^{M,\,q}_{\fai,\,L_D,\,\mathrm{at}}(\boz)$, $H_{\fai,\,L_D,\,\mathrm{max}}(\boz)$
and $H_{\fai,\,L_D,\,\mathrm{rad}}(\boz)$ coincide with equivalent quasi-norms.
Moreover, when $\boz$ is bounded, by Theorem \ref{t1.2}, we find that the spaces
$H_{\fai,\,L_D}(\boz)$, $H^{M,\,q}_{\fai,\,L_D,\,\mathrm{at}}(\boz)$, $H_{\fai,\,L_D,\,\mathrm{max}}(\boz)$,
$H_{\fai,\,L_D,\,\mathrm{rad}}(\boz)$, $h_{\fai,\,L_D,\,\mathrm{max}}(\boz)$ and
$h_{\fai,\,L_D,\,\mathrm{rad}}(\boz)$ coincide with equivalent quasi-norms.
To finish the proof of Theorem \ref{t4.1}, we only need to show that
$H_{\fai,\,L_D,\,\mathrm{rad}}(\boz)$ and $H_{\fai,\,r}(\boz)$ coincide with equivalent quasi-norms.

We first prove that
\begin{equation}\label{4.5}
\lf[H_{\fai,\,r}(\boz)\cap L^2 (\boz)\r]\subset\lf[H_{\fai,\,L_D,\,\mathrm{rad}}(\boz)\cap L^2(\boz)\r].
\end{equation}
Let $f\in [H_{\fai,\,r}(\boz)\cap L^2(\boz)]$. From the definition of
$H_{\fai,\,r}(\boz)$, we deduce that there exists $\wz{f}\in
H_{\fai}(\rn)$ such that $\wz{f}\big|_{\boz}=f$ and
\begin{equation}\label{4.6}
\lf\|\wz{f}\r\|_{H_{\fai}(\rn)}\ls\|f\|_{H_{\fai,\,r}(\boz)}.
\end{equation}
Using the assumptions that $nq(\fai)/i(\fai)<n+\dz_1$ and \eqref{4.4},
together with an argument similar to the proof of \cite[(3.2)]{yys3}, we conclude that, for any
constant multiple of a $(\fai,\,\fz,\,0)$-atom, $b$, supported in a
ball $B_0:=B(x_0,r_0)$, with $x_0\in\boz$ and $r_0\in(0,\fz)$, and $\lz\in(0,\fz)$,
\begin{equation}\label{4.7}
\int_{\boz}\fai\lf(x,
\frac{(b)^+_{L_D}(x)}{\lz}\r)\,dx\ls\fai\lf(B_0,
\frac{\|b\|_{L^\fz(B_0)}}{\lz}\r).
\end{equation}
For any $\wz{f}\in H_{\fai}(\rn)$, from Lemma \ref{l4.1},
it follows that there exists a sequence $\{b_i\}_{i=1}^\fz$ of constant multiples of $(\fai,\,\fz,\,0)$-atoms, with the constants depending on $i$, such
that $\wz{f}=\sum_{i=1}^\fz b_i$ in $\cs'(\rn)$ and
\begin{equation}\label{4.8}
\Lambda_\fz(\{b_i\}_{i=1}^\fz)\sim\lf\|\wz f\r\|_{H_{\fai}(\rn)}.
\end{equation}
Moreover, by the proof of \cite[Theorem 3.1]{k14},
we conclude that the supports of $\{b_i\}_{i=1}^\fz$ are of
finite intersection property. From this, $f\in L^2 (\boz)$,
$\wz{f}=\sum_{i=1}^\fz b_i$ in $\cs'(\rn)$ and $\wz{f}\big|_{\boz}=f$, we
deduce that $f=\sum_{i=1}^\fz b_i$ almost everywhere on $\boz$, which
further implies that
\begin{align*}
\int_{\boz}K_{t^2} (x,y)f(y)\,dy=\sum_{i=1}^\fz
\int_{\boz}K_{t^2} (x,y)b_i (y)\,dy.
\end{align*}
By this, we find that, for any $x\in\boz$,
$(f)^+_{L_D}(x)\le\sum_{i=1}^\fz(b_i)^+_{L_D}(x),$ which, combined with \eqref{4.7}, implies that,
for any $\lz\in(0,\fz)$,
\begin{align*}
\int_{\boz}\fai\lf(x,\frac{(f)^+_{L_D}(x)}{\lz}\r)\,dx\ls\sum_{i=1}^\fz
\int_{\boz}\fai\lf(x,\frac{(b_i)^+_{L_D}(x)}{\lz}\r)\,dx\ls\sum_{i=1}^\fz
\fai\lf(B_i,\frac{\|b_i\|_{L^\fz(B_i)}}{\lz}\r),
\end{align*}
where, for each $i$, $\supp(b_i)\subset B_i$.
From this, \eqref{4.6} and \eqref{4.8}, it follows that
$$\|f\|_{H_{\fai,\,L_D,\,\mathrm{rad}}(\boz)}\ls\Lambda_\fz(\{b_i\}_{i=1}^\fz)\sim\lf\|\wz
f\r\|_{H_{\fai}(\rn)}\ls\|f\|_{H_{\fai,\,r}(\boz)},$$
which implies that \eqref{4.5} holds true.

Now we show that
\begin{equation}\label{4.9}
\lf[H_{\fai,\,L_D,\,\mathrm{rad}}(\boz)\cap L^2(\boz)\r]\subset\lf[H_{\fai,\,r}(\boz)\cap L^2(\boz)\r].
\end{equation}
Let $f\in [H_{\fai,\,L_D,\,\mathrm{rad}}(\boz)\cap L^2 (\boz)]$.  We first assume that
$\boz$ is bounded. Then there exist
$\{\lz_0\}\cup\{\lz_j\}_{j=1}^\fz\subset\cc$, a $(\fai,\fz)$-single-atom $\az_0$ and
a sequence $\{\az_j\}_{j=1}^\fz$
of $(\fai,\,\fz,\,M)_L$-atoms associated, respectively, with the balls $\{B_{\boz,\,j}\}_{j=1}^\fz$ such that
\begin{equation}\label{4.10}
f=\lz_0\az_0+\sum_{j=1}^\fz\lz_j\az_j\ \text{in}\ L^2(\boz) \ \text{and} \ \
\|f\|_{H_{\fai,\,L_D,\,\mathrm{rad}}(\boz)}\sim\blz\lf(\{\lz_0\az_0\}\bigcup\{\lz_j\az_j\}_{j=1}^\fz\r),
\end{equation}
where, for any $j\in\nn$, $B_{\boz,\,j}:=B_j\cap\boz$ and
$B_j:=B(x_j,r_j)$ with $x_j\in\rn$ and $r_j\in(0,\fz)$ is
a ball in $\rn$.

Now we show that there exists $\wz{f}\in H_\fai(\rn)$ such that $\wz{f}\big|_{\boz}=f$ and
\begin{equation*}
\lf\|\wz{f}\r\|_{H_{\fai}(\rn)}\ls\blz\lf(\{\lz_0\az_0\}\bigcup\{\lz_j\az_j\}_{j=1}^\fz\r)\sim
\|f\|_{H_{\fai,\,L_D,\,\mathrm{rad}}(\boz)},
\end{equation*}
which further implies that $f\in H_{\fai,\,r}(\boz)$ and
$$\|f\|_{H_{\fai,\,r}(\boz)}\ls\|f\|_{H_{\fai,\,L_D,\,\mathrm{rad}}(\boz)},$$
and hence \eqref{4.9} holds true.

For the $(\fai,\fz)$-single-atom $\az_0$, by the Whitney decomposition of $\boz$ (see, for example, \cite[p.\,15, Lemma 2]{st93}), we know that there exists a sequence $\{B_{0,\,k}\}_{k=1}^\fz$ of balls
such that $\cup_{k=1}^\fz B_{0,\,k}=\boz$ and, for any $k\in\nn$, $2B_{0,\,k}\subset\boz$,
$4B_{0,\,k}\cap\partial\boz\neq\emptyset$ and $\sum_{k=1}^\fz\mathbf{1}_{B_{0,\,k}}\ls1$.
For any $k\in\nn$, let
$$\lz_{0,\,k}:=\frac{\|\mathbf{1}_{B_{0,\,k}}\|_{L^\fai(\rn)}}{\|\mathbf{1}_{\boz}\|_{L^\fai(\rn)}}\ \ \ \text{and} \ \ \
\az_{0,\,k}:=\frac{1}{\lz_{0,\,k}}\frac{\az_0\mathbf{1}_{B_{0,\,k}}}{\sum_{j=1}^\fz\mathbf{1}_{B_{0,\,j}}}.
$$
Then $\az_0=\sum_{k=1}^\fz\lz_{0,\,k}\az_{0,\,k}$ and,
for any $k\in\nn$,
$\|\az_{0,\,k}\|_{L^\fz(\rn)}\le\|\mathbf{1}_{B_{0,\,k}}\|_{L^\fai(\rn)}^{-1}$.
From the assumption that $\boz^\complement$ is unbounded and
a geometric property of strongly Lipschitz domains proved in \cite[p.\,183]{ar03}, it follows that,
for any $k\in\nn$, there exists a ball $\wz{B}_{0,\,k}\subset\boz^\complement$ such that
$$r_{\wz{B}_{0,\,k}}\sim r_{B_{0,\,k}}\ \ \text{and} \ \ \lf|x_{B_{0,\,k}}-x_{\wz{B}_{0,\,k}}\r|\ls r_{B_{0,\,k}},
$$
which further implies that there exists a positive constant $\wz{C}_3$
such that $(B_{0,\,k}\cup \wz{B}_{0,\,k})\subset\wz{C}_3B_{0,\,k}$.
For any $k\in\nn$, let
$$\wz{\lz}_{0,\,k}:=\lz_{0,\,k}\ \ \ \text{and}\ \ \ \wz{\az}_{0,\,k}:=\az_{0,\,k}-
\lf[\int_{B_{0,\,k}}\az_{0,\,k}(x)\,dx\r]\frac{\mathbf{1}_{\wz{B}_{0,\,k}}}{|\wz{B}_{0,\,k}|}.
$$
Then $\int_{\rn}\wz{\az}_{0,\,k}(x)\,dx=0$, $\supp(\wz{\az}_{0,\,k})\subset\wz{C}_3B_{0,\,k}$ and
\begin{align*}
\lf\|\wz{\az}_{0,\,k}\r\|_{L^\fz(\rn)}\le\|\az_{0,\,k}\|_{L^\fz(\rn)}\lf[1+\frac{|B_{0,\,k}|}{|\wz{B}_{0,\,k}|}\r]
\ls\|\az_{0,\,k}\|_{L^\fz(\rn)}\ls\lf\|\mathbf{1}_{B_{0,\,k}}\r\|_{L^\fai(\rn)}^{-1}\ls
\lf\|\mathbf{1}_{\wz{C}_3B_{0,\,k}}\r\|_{L^\fai(\rn)}^{-1}.
\end{align*}
Thus, $\wz{\az}_{0,\,k}$ is a harmless constant multiple of a $(\fai,\fz,0)$-atom and $\wz{\az}_{0,\,k}\big|_\boz=\az_{0,\,k}$.
Let $\wz{\az}_0:=\sum_{k=1}^\fz\wz{\lz}_{0,\,k}\wz{\az}_{0,\,k}$. Then $\wz{\az}_0\in H^{\fai,\,\fz,\,0}(\rn)$,
$\wz{\az}_{0}\big|_\boz=\az_{0}$ and
\begin{align}\label{4.11}
\sum_{k=1}^\fz\fai\lf(\wz{C}_3B_{0,\,k},\lf\|\lz_0\wz{\lz}_{0,\,k}\wz{\az}_{0,\,k}\r\|_{L^\fz(\rn)}\r)
\ls\sum_{k=1}^\fz\fai\lf(B_{0,\,k},\frac{|\lz_0|}{\|\mathbf{1}_\boz\|_{L^\fai(\rn)}}\r)
\ls\fai\lf(\boz,\frac{|\lz_0|}{\|\mathbf{1}_\boz\|_{L^\fai(\rn)}}\r).
\end{align}

Now we assume that $\az$ is a $(\fai,\,\fz,\,M)_L$-atom associated with the ball $B_\boz:=(B\cap\boz)$.
We deal with $\az$ by considering the following three cases.

\emph{Case 1)} $B\subset\boz$ but $4B\cap\partial\boz\neq\emptyset$. In this case, similarly to the proof
for $(\fai,\fz)$-single-atoms,
we know that there exist $\wz{\az}\in H^{\fai,\,\fz,\,0}(\rn)$, $\{\wz{\lz}_k\}_{k=1}^\fz\subset\cc$ and
a sequence $\{\wz{\az}_{k}\}_{k=1}^\fz$ of $(\fai,\,\fz,\,0)$-atoms
supported, respectively, in the balls $\{\wz{B}_{k}\}_{k=1}^\fz$ such that
$\wz{\az}=\sum_{k=1}^\fz\wz{\lz}_{k}\wz{\az}_{k}$,
$\wz{\az}\big|_\boz=\az$ and, for any $\lz\in(0,\fz)$,
\begin{align*}
\sum_{k=1}^\fz\fai\lf(\wz{B}_{k},\lf\|\lz\wz{\lz}_{k}\wz{\az}_{k}\r\|_{L^\fz(\rn)}\r)
\ls\fai\lf(B\cap\boz,\frac{\lz}{\|\mathbf{1}_B\|_{L^\fai(\rn)}}\r).
\end{align*}

\emph{Case 2)}  $4B\subset\boz$. In this case, by the definition of $(\fai,\,\fz,\,M)_L$-atoms, we conclude that
there exists $b\in \cd(L_D)$ such that $\az=L_D b$ and $\supp(b)\subset B$. Take $\psi\in C^\fz_c(\boz)$ satisfying
$\psi\equiv1$ on $2B$. Then, from the condition $b\in \cd(L_D)$ and the definition of $L_D$, we deduce that
$$\int_{B}\az(x)\,dx=\int_{\boz}\az(x)\psi(x)\,dx=-\int_{\boz}A(x)\nabla b(x)\cdot\ol{\nabla
\psi(x)}\,dx=0,
$$
which, combined with $\supp(\az)\subset B$ and $\|\az\|_{L^\fz(B)}\le\|\mathbf{1}_B\|^{-1}_{L^\fai(\rn)}$, implies that
$\az$ is a $(\fai,\,\fz,\,0)$-atom.

\emph{Case 3)} $B\cap\boz\neq\emptyset$. In this case, for any $x\in B\cap\boz$, let $B_x:=B(x,\rho(x)/2)$,
where $\rho(x):=\dist(x,B\cap\partial\boz)$. Then, by the Besicovitch covering theorem (see, for example,
\cite[p.\,44]{st93}), we find that there exists
a sequence $\{B_{x_i}\}_{i=1}^\fz$ of balls such that $(B\cap\boz)\subset\cup_{i=1}^\fz B_{x_i}$ and
$\sum_{i=1}^\fz\mathbf{1}_{B_{x_i}}\ls1$.

For any $i\in\nn$, let
$$\lz_{i}:=\frac{\|\mathbf{1}_{B_{x_i}\cap\boz}\|_{L^\fai(\rn)}}{\|\mathbf{1}_{B\cap\boz}\|_{L^\fai(\rn)}}\ \ \ \text{and} \ \ \
\az_{i}:=\frac{1}{\lz_{i}}\frac{\az\mathbf{1}_{B_{x_i}}}{\sum_{j=1}^\fz\mathbf{1}_{B_{x_j}}}.
$$
Then $\az=\sum_{i=1}^\fz\lz_{i}\az_{i}$ and, for any $i\in\nn$,
$\|\az_{i}\|_{L^\fz(\rn)}\le\|\mathbf{1}_{B_{x_i}\cap\boz}\|_{L^\fai(\rn)}^{-1}$.
By an argument similar to that used in the case of $(\fai,\fz)$-single-atoms, we know that,
for any $i\in\nn$, there exists a ball $\wz{B}_{x_i}\subset\boz^\complement$ such that
$$r_{\wz{B}_{x_i}}\sim r_{B_{x_i}}\ \ \text{and} \ \ \lf|x_{{B}_{x_i}}-x_{\wz{B}_{x_i}}\r|\ls r_{B_{x_i}},
$$
which further implies that there exists a positive constant $\wz{C}_4$
such that $B_{x_i}\cup \wz{B}_{x_i}\subset\wz{C}_4B_{x_i}$.
For any $i\in\nn$, let
$$\wz{\lz}_{i}:=\lz_{i}\ \ \ \text{and}\ \ \ \wz{\az}_{i}:=\az_{i}-
\lf[\int_{B_{x_i}\cap\boz}\az_{i}(x)\,dx\r]\frac{\mathbf{1}_{\wz{B}_{x_i}}}{|\wz{B}_{x_i}|}.
$$
Then $\int_{\rn}\wz{\az}_{i}(x)\,dx=0$, $\supp(\wz{\az}_{i})\subset\wz{C}_4B_{x_i}$ and
\begin{align*}
\lf\|\wz{\az}_{i}\r\|_{L^\fz(\rn)}\le\|\az_{i}\|_{L^\fz(\rn)}\lf[1+\frac{|B_{x_i}|}{|\wz{B}_{x_i}|}\r]
\ls\|\az_{i}\|_{L^\fz(\rn)}
\ls\|\mathbf{1}_{B_{x_i}\cap\boz}\|_{L^\fai(\rn)}^{-1}\ls
\lf\|\mathbf{1}_{\wz{C}_4B_{x_i}}\r\|_{L^\fai(\rn)}^{-1}.
\end{align*}
Thus, $\wz{\az}_{i}$ is a harmless constant multiple of
a $(\fai,\fz,0)$-atom and $\wz{\az}_{i}\big|_\boz=\az_{i}$.
Let $\wz{\az}:=\sum_{i=1}^\fz\wz{\lz}_{i}\wz{\az}_{i}$. Then $\wz{\az}\in H^{\fai,\,\fz,\,0}(\rn)$,
$\wz{\az}\big|_\boz=\az$ and, for any $\lz\in(0,\fz)$,
\begin{align*}
\sum_{i=1}^\fz\fai\lf(\wz{C}_4B_{x_i},\lf\|\lz\wz{\lz}_{i}\wz{\az}_{i}\r\|_{L^\fz(\rn)}\r)
\ls\sum_{i=1}^\fz\fai\lf(B_{x_i},\frac{\lz}{\|\mathbf{1}_{B\cap\boz}\|_{L^\fai(\rn)}}\r)
\ls\fai\lf(B\cap\boz,\frac{\lz}{\|\mathbf{1}_{B\cap\boz}\|_{L^\fai(\rn)}}\r).
\end{align*}

By the proofs in Cases 1) to 3), we know that, for any $j\in\nn$ and $(\fai,\,\fz,\,M)_L$-atom $\az_j$
associated with the ball $B_{\boz,\,j}=(B_j\cap\boz)$, there exist $\{\wz{\lz}_{j,\,k}\}_{k}\subset\cc$ and
a sequence $\{\wz{\az}_{j,\,k}\}_k$ of harmless constant multiples
of $(\fai,\,\fz,\,0)$-atoms supported, respectively, in the balls
$\{\wz{B}_{j,\,k}\}_k$ such that $\wz{\az}_j:=\sum_k\wz{\lz}_{j,\,k}\wz{\az}_{j,\,k}\big|_\boz=\az_j$
and, for any $\lz\in(0,\fz)$,
\begin{align}\label{4.12}
\sum_{k}\fai\lf(\wz{B}_{j,\,k},\lf\|\lz\wz{\lz}_{j,\,k}\wz{\az}_{j,\,k}\r\|_{L^\fz(\rn)}\r)
\ls\fai\lf(B_j\cap\boz,\frac{\lz}{\|\mathbf{1}_{B_j\cap\boz}\|_{L^\fai(\rn)}}\r).
\end{align}

Let
$$\wz{f}:=\sum_{k=1}^\fz\lz_0\wz{\lz}_{0,\,k}\wz{\az}_{0,\,k}+\sum_{j=1}^\fz\sum_k\lz_j\wz{\lz}_{j,\,k}
\wz{\az}_{j,\,k}.$$
Then $\wz{f}\big|_\boz=f$. Moreover, from \eqref{4.11} and \eqref{4.12}, we deduce that,
for any $\lz\in(0,\fz)$,
\begin{align*}
&\sum_{k=1}^\fz\fai\lf(\wz{C}_3B_{0,\,k},\frac{\|\lz_0\wz{\lz}_{0,\,k}\wz{\az}_{0,\,k}\|_{L^\fz(\rn)}}{\lz}\r)
+\sum_{j=1}^\fz\sum_{k}\fai\lf(\wz{B}_{j,\,k},\frac{\|\lz_j\wz{\lz}_{j,\,k}\wz{\az}_{j,\,k}\|_{L^\fz(\rn)}}{\lz}\r)\\
&\hs\ls\fai\lf(\boz,\frac{|\lz_0|}{\lz\|\mathbf{1}_\boz\|_{L^\fai(\rn)}}\r)+
\sum_{j=1}^\fz\fai\lf(B_j\cap\boz,\frac{|\lz_j|}{\lz\|\mathbf{1}_{B_j\cap\boz}\|_{L^\fai(\rn)}}\r),
\end{align*}
which, combined with \eqref{4.10} and Lemma \ref{l4.1}, further implies that
\begin{align}\label{4.13}
\|f\|_{H_{\fai,\,r}(\boz)}\ls\lf\|\wz{f}\r\|_{H_{\fai}(\rn)}\ls\|f\|_{H_{\fai,\,L_D,\,\mathrm{rad}}(\boz)}.
\end{align}
Thus, \eqref{4.9} holds true. In the case that $\boz$ is unbounded, repeating the proofs of \eqref{4.12}
and \eqref{4.13}, we know that \eqref{4.9} also holds true in this case.

By \eqref{4.5} and \eqref{4.9}, we conclude that
\begin{equation*}
\lf[H_{\fai,\,L_D,\,\mathrm{rad}}(\boz)\cap L^2(\boz)\r]=\lf[H_{\fai,\,r}(\boz)\cap L^2(\boz)\r],
\end{equation*}
which, together with the fact that the spaces
$H_{\fai,\,L_D,\,\mathrm{rad}}(\boz)\cap L^2(\boz)$ and
$H_{\fai,\,r}(\boz)\cap L^2(\boz)$ are dense, respectively, in the spaces
$H_{\fai,\,L_D,\,\mathrm{rad}}(\boz)$ and $H_{\fai,\,r}(\boz)$, and a
density argument, implies that the spaces $H_{\fai,\,L_D,\,\mathrm{rad}}(\boz)$,
and $H_{\fai,\,r}(\boz)$ coincide with equivalent quasi-norms.
This finishes the proof of Theorem \ref{t4.1}.
\end{proof}

Let $\boz$ be a strongly Lipschitz domain in $\rn$ and $L_N$ a second-order
elliptic operator with the real bounded coefficients and the Neumann boundary condition on $\boz$.
Denote by $\{K_t\}_{t>0}$ the kernels of the semigroup $\{e^{-tL_N}\}_{t>0}$.
From \cite[Theorem 3.2.9]{d89} (see also \cite{at01a}), it follows that there exist  positive constants $C$ and $c$ such that, for any $t\in(0,\fz)$ and $x,\,y\in\boz$,
\begin{equation}\label{4.14}
|K_t(x,y)|\le\frac{C}{|B_\boz(x,\sqrt{t})|}\exp\lf\{-\frac{|x-y|^2}{ct}\r\},
\end{equation}
where $B_\boz(x,\sqrt{t}):=B(x,\sqrt{t})\cap\boz$. This implies that $L_N$ satisfies Assumption \ref{a2}. Moreover, by \cite[Propsition 5]{at01a}, we know that
there exist positive constants $C$, $c$ and $\dz_2\in(0,1]$
such that, for any $t\in(0,[\diam(\boz)]^2)$
and $x,\,y_1,\,y_2\in\boz$ with $|y_1-y_2|<\sqrt{t}/2$,
\begin{equation}\label{4.15}
|K_t(x,y_1)-K_t(x,y_2)|\le\frac{C}{|B_\boz(x,\sqrt{t})|}
\lf[\frac{|y_1-y_2|}{\sqrt{t}}\r]^{\dz_2}
\exp\lf\{-\frac{|x-y|^2}{ct}\r\}
\end{equation}
(see also \cite{ar03}). Furthermore, it is well known that $L_N$ satisfies Assumption \ref{a4}
(see, for example, \cite{ar03,at01a,o04}).

Assume further that $L_N$ has symmetrical coefficients. Then $L_N$ satisfies Assumptions \ref{a1} and \ref{a2}.

Let
$$\wz{H}_{\fai,\,z}(\boz):=\begin{cases}H_{\fai,\,z}(\boz)\ \ &\text{if}\ \boz\ \text{is unbounded},\\
H_{\fai,\,z}(\boz)+L^\fz(\boz)\ \ &\text{if}\ \boz\ \text{is bounded},
\end{cases}
$$
where, for any $f\in H_{\fai,\,z}(\boz)+L^\fz(\boz)$,
$$\|f\|_{H_{\fai,\,z}(\boz)+L^\fz(\boz)}:=\inf\lf\{\|f_1\|_{H_{\fai,\,z}(\boz)}+\|f_2\|_{L^\fz(\boz)}:\ f=f_1+f_2\r\}.$$

For $\wz{H}_{\fai,\,z}(\boz)$, we have the following equivalent characterizations via the (local)
Musielak-Orlicz-Hardy spaces on $\boz$ associated with $L_N$.

\begin{theorem}\label{t4.2}
Let $\boz$ be a strongly Lipschitz domain in $\rn$ and $\fai$ as in Definition \ref{d1.2}.
Assume that $L_N$ is a second-order self-adjoint elliptic operator on $\boz$, with
the Neumann boundary condition, satisfying \eqref{4.14} and \eqref{4.15}. Let $\dz_2\in(0,1]$, $r(\fai)$, $I(\fai)$, $q(\fai)$
and $i(\fai)$ be, respectively, as in \eqref{4.15}, \eqref{1.7}, \eqref{1.4}, \eqref{1.6} and \eqref{1.5}.
If $nq(\fai)/i(\fai)<n+\dz_2$, then, for any $q\in([r(\fai)]'I(\fai),\fz]\cap(1,\fz]$ and
$M\in\nn\cap(\frac{nq(\fai)}{2i(\fai)},\fz)$,
the Musielak-Orlicz-Hardy spaces $H_{\fai,\,L_N}(\boz)$, $H^{M,\,q}_{\fai,\,L_N,\,\mathrm{at}}(\boz)$,
$H_{\fai,\,L_N,\,\mathrm{max}}(\boz)$, $H_{\fai,\,L_N,\,\mathrm{rad}}(\boz)$, $H^q_{\fai,\,\mathrm{at}}(\boz)$
and $\wz{H}_{\fai,\,z}(\boz)$ coincide with equivalent quasi-norms. In particular, if $\boz$
is bounded, then the spaces
$$H_{\fai,\,L_N}(\boz),\ H^{M,\,q}_{\fai,\,L_N,\,\mathrm{at}}(\boz),\
H_{\fai,\,L_N,\,\mathrm{max}}(\boz),\ H_{\fai,\,L_N,\,\mathrm{rad}}(\boz),\ H^q_{\fai,\,\mathrm{at}}(\boz),\ h_{\fai,\,L_N,\,\mathrm{max}}(\boz),\
h_{\fai,\,L_N,\,\mathrm{rad}}(\boz)$$
and $\wz{H}_{\fai,\,z}(\boz)$
coincide with equivalent quasi-norms.
\end{theorem}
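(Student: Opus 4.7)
The plan is to reduce Theorem \ref{t4.2} to Theorems \ref{t1.1}--\ref{t1.3} applied to the pair $(\boz,L_N)$ together with a passage between atomic decompositions on $\boz$ and on $\rn$ through Lemma \ref{l4.1}. First I verify that $L_N$ fulfills Assumptions \ref{a1}--\ref{a4}: Assumption \ref{a1} is part of the hypothesis; \eqref{4.14} supplies Assumption \ref{a2}; \eqref{4.15} gives Assumption \ref{a3}; and the Neumann heat semigroup preserves constants on $\boz$, so Assumption \ref{a4} holds. Under the size restriction $nq(\fai)/i(\fai)<n+\dz_2$, Theorem \ref{t1.3} then directly produces the equivalence, with equivalent quasi-norms, among
\[
H_{\fai,\,L_N}(\boz),\ H^{M,\,q}_{\fai,\,L_N,\,\mathrm{at}}(\boz),\ H_{\fai,\,L_N,\,\mathrm{max}}(\boz),\ H_{\fai,\,L_N,\,\mathrm{rad}}(\boz),\ H^{q}_{\fai,\,\mathrm{at}}(\boz),
\]
and, when $\boz$ is bounded, also brings in $h_{\fai,\,L_N,\,\mathrm{max}}(\boz)$ and $h_{\fai,\,L_N,\,\mathrm{rad}}(\boz)$. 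Thus the theorem reduces to identifying $H^{q}_{\fai,\,\mathrm{at}}(\boz)$ with $\wz{H}_{\fai,\,z}(\boz)$.

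For the inclusion $H^{q}_{\fai,\,\mathrm{at}}(\boz)\subset\wz{H}_{\fai,\,z}(\boz)$, I would start from a generic atomic representation $f=\lz_0\az_0+\sum_{j=1}^{\fz}\lz_j\az_j$ in $L^2(\boz)$, where each $\az_j$ ($j\ge1$) is a $(\fai,q,0)$-atom associated with $B_j\cap\boz$ and, when $\mu(\boz)<\fz$, $\az_0$ is a $(\fai,q)$-single-atom on $\boz$. Extending each $\az_j$ by zero to $\rn$ preserves its support (now contained in $\bar{B}_j\cap\bar\boz$) and its vanishing-integral property. The interior corkscrew property of strongly Lipschitz domains ensures that $|B_j\cap\boz|\sim|B_j|$ after harmlessly enlarging $B_j$ by a uniform factor whose center is relocated into $\boz$, and Lemma \ref{l2.2}(iv) then upgrades this to $\fai(B_j,\cdot)\sim\fai(B_j\cap\boz,\cdot)$ and to comparability of the corresponding $L^\fai$-norms of characteristic functions. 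Hence the zero-extension of $\az_j$ is a harmless constant multiple of a $(\fai,\fz,0)$-atom in the sense of Definition \ref{d4.3}, so by Lemma \ref{l4.1} the sum $\sum_j\lz_j\az_j$ represents an element of $H_\fai(\rn)$ whose support lies in $\bar\boz$, i.e., an element of $H_{\fai,\,z}(\boz)$, with the expected quasi-norm bound. The single-atom $\az_0$ (present only when $\boz$ is bounded) is bounded by $[\mu(\boz)]^{1/q}\|\mathbf{1}_\boz\|^{-1}_{L^\fai(\boz)}$ and is absorbed by the $L^\fz(\boz)$ summand of $\wz{H}_{\fai,\,z}(\boz)$.

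The main obstacle is the reverse inclusion $\wz{H}_{\fai,\,z}(\boz)\subset H^{q}_{\fai,\,\mathrm{at}}(\boz)$. Given $f=f_1+f_2$ with $f_1\in H_{\fai,\,z}(\boz)\cap L^2(\boz)$ (a dense subset) and $f_2\in L^\fz(\boz)$ (only present when $\boz$ is bounded), I would apply Lemma \ref{l4.1} to the zero-extension $\wz{f}_1\in H_\fai(\rn)$ to obtain an atomic decomposition $\wz{f}_1=\sum_i\lz_i b_i$ in terms of $(\fai,\fz,0)$-atoms $b_i$ supported in Euclidean balls $B_i$. Atoms with $B_i\subset\boz$ are already $(\fai,\fz,0)$-atoms on $\boz$. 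For atoms whose supports straddle $\partial\boz$, I imitate the correction scheme from the proof of Theorem \ref{t4.1} but run in reverse: the strongly Lipschitz geometry furnishes a ball $\wz{B}_i\subset\boz$ comparable in radius to $B_i$ and within distance $\ls r_{B_i}$; restricting $b_i$ to $\boz$ and subtracting a suitable multiple of $\mathbf{1}_{\wz{B}_i}/|\wz{B}_i|$ restores the zero-mean condition over $\boz$ and produces a constant multiple of a $(\fai,\fz,0)$-atom on $\boz$ supported in $\wz{C}B_i\cap\boz$ for a uniform constant $\wz{C}$. The cumulative mass of the compensation terms telescopes because $\wz{f}_1$ vanishes on $(\bar\boz)^\complement$, and the quasi-norm is controlled via Lemma \ref{l2.2}(iv) and the $\aa_\fz(\boz)$-property of $\fai$; propagating these $\fai$-measurements through the modification is the technical heart of the argument. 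Finally, the $L^\fz(\boz)$ summand $f_2$ is, up to a multiplicative constant, a $(\fai,q)$-single-atom, contributing the expected quasi-norm bound. Combining these steps yields $\wz{H}_{\fai,\,z}(\boz)=H^{q}_{\fai,\,\mathrm{at}}(\boz)$ and completes the proof of Theorem \ref{t4.2}.
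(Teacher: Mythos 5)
Your first step (checking Assumptions \ref{a1}--\ref{a4} for $L_N$ and invoking Theorems \ref{t1.1}, \ref{t1.2} and \ref{t1.3}) is exactly what the paper does, and your forward inclusion is also essentially the paper's argument for \eqref{4.16}: zero-extend the atoms on $\boz$, use the zero-mean property (from conservativity) and the comparability $\fai(B\cap\boz,t)\sim\fai(B,t)$ coming from the Lipschitz geometry and Lemma \ref{l2.2}(iv), absorb the single-atom into the $L^\fz(\boz)$ summand, and conclude via Lemma \ref{l4.1}. The problem is your reverse inclusion $\wz{H}_{\fai,\,z}(\boz)\subset H^{q}_{\fai,\,\mathrm{at}}(\boz)$. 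The atoms $b_i$ produced by Lemma \ref{l4.1} for $\wz{f}_2\in H_\fai(\rn)$ are \emph{not} supported in $\ol{\boz}$ even though $\wz{f}_2$ is, so restricting $b_i$ to $\boz$ genuinely destroys the cancellation, and each correction term $c_i\,\mathbf{1}_{\wz{B}_i}/|\wz{B}_i|$ with $c_i:=\int_\boz b_i$ has no cancellation and $L^\fz$-size comparable to that of the atom itself. Your assertion that "the cumulative mass of the compensation terms telescopes because $\wz{f}_1$ vanishes on $(\ol{\boz})^\complement$" does not hold as stated: the bumps live on pairwise different balls $\wz{B}_i$, so even if $\sum_i\lz_i c_i=0$ (which itself requires an $L^1$-convergence justification that is unavailable in the uniformly-lower-type-$p<1$ regime), the sum $\sum_i\lz_i c_i\,\mathbf{1}_{\wz{B}_i}/|\wz{B}_i|$ is not zero and is not a sum of atoms. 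Turning it into one requires a further nontrivial chaining/regrouping construction with its own $\blz$-functional estimates, which you have not supplied; when $\boz$ is unbounded there is not even a single-atom available to absorb these terms.

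The paper avoids this obstruction entirely by building the bridge at the level of the radial maximal function rather than at the level of atoms: it proves $H_{\fai,\,L_N,\,\mathrm{rad}}(\boz)=\wz{H}_{\fai,\,z}(\boz)$ (not $H^{q}_{\fai,\,\mathrm{at}}(\boz)=\wz{H}_{\fai,\,z}(\boz)$ directly). For the direction you find hard, it extends the Neumann kernel $K_t(x,\cdot)$ to a H\"older continuous kernel $\wz{K}_t(x,\cdot)$ on all of $\rn$ (the Auscher--Russ extension), writes $e^{-t^2L_N}(f_2)(x)=\sum_j\int_{\rn}\wz{K}_{t^2}(x,y)b_j(y)\,dy$, and estimates $(b_j)^+_{L_N,\,\loc}$ for each Euclidean atom $b_j$ exactly as in \eqref{2.17}, using the H\"older regularity and the cancellation of $b_j$ over $\rn$. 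This requires no re-atomization on $\boz$. If you want to keep your atomic target, you must either carry out the chaining construction in full (including the unbounded case) or switch to the maximal-function route.
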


When $\fai(x,t):=t^p$, with $p\in(0,1]$, for any $x\in\rn$ and $t\in[0,\fz)$,
we denote the spaces $$H_{\fai,\,L_N,\,\mathrm{max}}(\boz),\ H_{\fai,\,L_N,\,\mathrm{rad}}(\boz),\
\wz{H}_{\fai,\,z}(\boz),\ h_{\fai,\,L_N,\,\mathrm{max}}(\boz)\ \mathrm{and}\
h_{\fai,\,L_N,\,\mathrm{rad}}(\boz)$$
simply, respectively,  by $H^p_{L_N,\,\mathrm{max}}(\boz)$,
$H^p_{L_N,\,\mathrm{rad}}(\boz)$, $\wz{H}^p_{z}(\boz)$, $h^p_{L_N,\,\mathrm{max}}(\boz)$ and
$h^p_{L_N,\,\mathrm{rad}}(\boz)$.

As a simple corollary of Theorem \ref{t4.2}, we have the following conclusion.

\begin{corollary}\label{c4.2}
Let $\boz$ be a strongly Lipschitz domain in $\rn$.
Assume that $L_N$ is a second-order self-adjoint elliptic operator on $\boz$, with
the Neumann boundary condition, satisfying \eqref{4.14} and \eqref{4.15}. Let $\dz_2\in(0,1]$
and $p\in(\frac{n}{n+\dz_2},1]$. Then, for any $q\in(1,\fz]$ and $M\in\nn\cap(\frac{n}{2p},\fz)$,
the Hardy spaces $H^p_{L_N}(\boz)$, $H^{p,\,M,\,q}_{L_N,\,\mathrm{at}}(\boz)$,
$H^p_{L_N,\,\mathrm{max}}(\boz)$, $H^p_{L_N,\,\mathrm{rad}}(\boz)$, $H^{p,\,q}_{\mathrm{at}}(\boz)$
and $\wz{H}^p_{z}(\boz)$ coincide with equivalent quasi-norms. In particular, if $\boz$
is bounded, then the spaces $H^p_{L_N}(\boz)$, $H^{p,\,M,\,q}_{L_N,\,\mathrm{at}}(\boz)$,
$H^p_{L_N,\,\mathrm{max}}(\boz)$, $H^p_{L_N,\,\mathrm{rad}}(\boz)$, $H^{p,\,q}_{\mathrm{at}}(\boz)$,
$h^p_{L_N,\,\mathrm{max}}(\boz)$, $h^p_{L_N,\,\mathrm{rad}}(\boz)$
and $\wz{H}^p_{z}(\boz)$ coincide with equivalent quasi-norms.
\end{corollary}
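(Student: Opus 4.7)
The plan is to derive Corollary \ref{c4.2} directly from Theorem \ref{t4.2} by specializing the growth function to $\fai(x,t):=t^p$ with $p\in(0,1]$ for any $x\in\rn$ and $t\in[0,\fz)$, and then verifying that, under this choice, all the hypotheses of Theorem \ref{t4.2} reduce to the hypotheses of Corollary \ref{c4.2}.

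First I would compute the relevant indices for $\fai(x,t):=t^p$. For this choice, $\fai$ is of uniformly lower and uniformly upper type $p$, so $i(\fai)=I(\fai)=p$. Since the weight part is trivial, $\fai\in\aa_1(\rn)\subset\aa_1(\boz)$ and $\fai\in\rh_\fz(\rn)\supset\rh_\fz(\boz)$, which gives $q(\fai)=1$ and $r(\fai)=\fz$, hence $[r(\fai)]'=1$. Consequently, the smallness assumption $nq(\fai)/i(\fai)<n+\dz_2$ in Theorem \ref{t4.2} becomes $n/p<n+\dz_2$, i.e., $p\in(\frac{n}{n+\dz_2},1]$, which is exactly the hypothesis in Corollary \ref{c4.2}. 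The range $q\in([r(\fai)]'I(\fai),\fz]\cap(1,\fz]$ in Theorem \ref{t4.2} becomes $q\in(p,\fz]\cap(1,\fz]=(1,\fz]$ because $p\le1$, and the condition $M\in\nn\cap(\frac{nq(\fai)}{2i(\fai)},\fz)$ becomes $M\in\nn\cap(\frac{n}{2p},\fz)$.

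Next I would match the function-space notation. By the conventions introduced just before Corollary \ref{c1.1} and just before Corollary \ref{c4.2}, when $\fai(x,t):=t^p$ the spaces $H^{M,\,q}_{\fai,\,L_N,\,\mathrm{at}}(\boz)$, $H_{\fai,\,L_N,\,\mathrm{max}}(\boz)$, $H_{\fai,\,L_N,\,\mathrm{rad}}(\boz)$, $H^{q}_{\fai,\,\mathrm{at}}(\boz)$, $\wz{H}_{\fai,\,z}(\boz)$, $h_{\fai,\,L_N,\,\mathrm{max}}(\boz)$ and $h_{\fai,\,L_N,\,\mathrm{rad}}(\boz)$ coincide, respectively, with $H^{p,\,M,\,q}_{L_N,\,\mathrm{at}}(\boz)$, $H^p_{L_N,\,\mathrm{max}}(\boz)$, $H^p_{L_N,\,\mathrm{rad}}(\boz)$, $H^{p,\,q}_{\mathrm{at}}(\boz)$, $\wz{H}^p_z(\boz)$, $h^p_{L_N,\,\mathrm{max}}(\boz)$ and $h^p_{L_N,\,\mathrm{rad}}(\boz)$, while $H_{\fai,\,L_N}(\boz)$ is $H^p_{L_N}(\boz)$. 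With the indices verified above, Theorem \ref{t4.2} yields the desired chain of equivalent quasi-norms in the unbounded case, and the enlarged chain (including the two local Hardy spaces) in the bounded case.

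There is essentially no genuine obstacle here, since Corollary \ref{c4.2} is a pure specialization; the only thing to be careful about is that, in the definition of the growth function (Definition \ref{d1.2}(iii)), $\fai$ is required to be of uniformly upper type $1$, which does hold for $\fai(x,t):=t^p$ with $p\in(0,1]$ since the upper type $p$ property implies the upper type $1$ property on $s\ge 1$. With this verification in place, the conclusion of Corollary \ref{c4.2} follows verbatim from Theorem \ref{t4.2}, so the proof amounts to a one-line invocation after recording the index computations above.
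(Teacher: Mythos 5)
Your proposal is correct and coincides with the paper's treatment: the paper presents Corollary \ref{c4.2} as an immediate specialization of Theorem \ref{t4.2} to $\fai(x,t):=t^p$, and your index computations ($i(\fai)=I(\fai)=p$, $q(\fai)=1$, $r(\fai)=\fz$, hence $[r(\fai)]'=1$) correctly translate all hypotheses of the theorem into those of the corollary. Nothing further is needed.
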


\begin{remark}\label{r4.2}
\begin{itemize}
  \item[(i)] When $p=1$ and $\boz$ is unbounded,
  the equivalences of the spaces $H^p_{L_N,\,\mathrm{max}}(\boz)$,
  $H^p_{L_N,\,\mathrm{rad}}(\boz)$, $H^p_{L_N}(\boz)$, $H^{p,\,q}_{\mathrm{at}}(\boz)$ and
  $\wz{H}^p_{z}(\boz)$ in Corollary \ref{c4.2} were established in
  \cite[Theorem 1, Proposition 5 and Theorem 20]{ar03}. The other cases of
  Corollary \ref{c4.2} even in this case are new.
  Thus, Corollary \ref{c4.2} improves the results obtained
  in \cite{ar03} by removing the assumption that $\boz$ is unbounded even when $p=1$.

  \item[(ii)] When $p\in(\frac{n}{n+\dz_2},1]$, the equivalences of the spaces $H^p_{L_N,\,\mathrm{max}}(\boz)$,
  $H^p_{L_N,\,\mathrm{rad}}(\boz)$, $H^p_{L_N}(\boz)$ and $H^q_{\fai,\,\mathrm{at}}(\boz)$ in
  Corollary \ref{c4.2} were obtained in \cite[Theorems 1.4 and 4.1]{bdl1}. It is worth pointing out
  that the equivalence of the spaces $H^p_{L_N}(\boz)$ and $\wz{H}^p_{z}(\boz)$ when $\boz$ is unbounded
  and the equivalences of the spaces $h^p_{L_N,\,\mathrm{max}}(\boz)$, $h^p_{L_N,\,\mathrm{rad}}(\boz)$
and $\wz{H}^p_{z}(\boz)$ when $\boz$ is bounded are new in this case. Thus, Corollary \ref{c4.2} improves
\cite[Theorems 1.4 and 4.1]{bdl1} in this case.
\end{itemize}
\end{remark}

\begin{remark}\label{r4.3}
\begin{itemize}
\item[(i)] Let $\Phi$ be an Orlicz function satisfying that $i(\Phi)\in(\frac{n}{n+\dz_2},1]$
and $\Phi$ is of upper type 1. When $\fai(x,t):=\Phi(t)$ for any $x\in\rn$ and $t\in[0,\fz)$ and $\boz$ is unbounded,
the equivalences of the spaces $H_{\fai,\,L_N}(\boz)$, $H_{\fai,\,L_N,\,\mathrm{max}}(\boz)$, $H_{\fai,\,L_N,\,\mathrm{rad}}(\boz)$,
and $H_{\fai,\,r}(\boz)$ in Theorem \ref{t4.1} were obtained in
  \cite[Theorems 1.10 and 1.12]{yys2}. Theorem \ref{t4.2} improves the results in
  \cite[Theorems 1.10 and 1.12]{yys2} by removing the assumption that $\boz$ is unbounded.
  Moreover, when $\boz$ is bounded, the  equivalences of the spaces $h_{\fai,\,L_N,\,\mathrm{max}}(\boz)$, $h_{\fai,\,L_N,\,\mathrm{rad}}(\boz)$ and $\wz{H}_{\fai,\,z}(\boz)$ are new even in this case.

\item[(ii)] Theorem \ref{t4.2} is new even when $\fai$ is as in \eqref{1.8} or \eqref{1.9}.
\end{itemize}
\end{remark}

Now we prove Theorem \ref{t4.2} by using Theorems \ref{t1.1}, \ref{t1.2} and \ref{t1.3} and Lemma \ref{l4.1}.

\begin{proof}[Proof of Theorem \ref{t4.2}]
Applying Theorems \ref{t1.1} and \ref{t1.3} to $\cx:=\boz$ and $L:=L_N$, we conclude that
the spaces $H_{\fai,\,L_N}(\boz)$, $H^{M,\,q}_{\fai,\,L_N,\,\mathrm{at}}(\boz)$, $H_{\fai,\,L_N,\,\mathrm{max}}(\boz)$,
$H_{\fai,\,L_N,\,\mathrm{rad}}(\boz)$ and $H^q_{\fai,\,\mathrm{at}}(\boz)$ coincide with equivalent quasi-norms.
Moreover, when $\boz$ is bounded, by Theorem \ref{t1.2}, we know that the spaces
$H_{\fai,\,L_N}(\boz)$, $H^{M,\,q}_{\fai,\,L_N,\,\mathrm{at}}(\boz)$, $H_{\fai,\,L_N,\,\mathrm{max}}(\boz)$,
$H_{\fai,\,L_N,\,\mathrm{rad}}(\boz)$, $H^q_{\fai,\,\mathrm{at}}(\boz)$, $h_{\fai,\,L_N,\,\mathrm{max}}(\boz)$ and
$h_{\fai,\,L_N,\,\mathrm{rad}}(\boz)$ coincide with equivalent quasi-norms.
To finish the proof of Theorem \ref{t4.2}, we only need to show that the spaces
$H_{\fai,\,L_N,\,\mathrm{rad}}(\boz)$ and $\wz{H}_{\fai,\,z}(\boz)$ coincide with equivalent quasi-norms.

We first prove that
\begin{equation}\label{4.16}
\lf[H_{\fai,\,L_N,\,\mathrm{rad}}(\boz)\cap L^2(\boz)\r]\subset\lf[\wz{H}_{\fai,\,z}(\boz)\cap L^2(\boz)\r].
\end{equation}
Let $f\in [H_{\fai,\,L_N,\,\mathrm{rad}}(\boz)\cap L^2(\boz)]$. We first assume that $\boz$ is bounded.
Then there exist $\{\lz_0\}\cup\{\lz_j\}_{j=1}^\fz\subset\cc$, a $(\fai,\,\fz)$-single-atom $\az_0$
and a sequence $\{\az_j\}_{j=1}^\fz$ of $(\fai,\,\fz,\,M)_L$-atoms
such that
$$
f=\lz_0\az_0+\sum_{j=1}^\fz\lz_j\az_j\ \ \text{in}\ \  L^2(\boz)\ \ \text{and}\ \
\blz\lf(\{\lz_0\az_0\}\bigcup\lf\{\lz_j\az_j\r\}_{j=1}^\fz\r)\ls\|f\|_{H_{\fai,\,L_N,\,\mathrm{rad}}(\boz)}.
$$

Obviously, $f_1:=\lz_0\az_0\in L^\fz(\boz)$ and
\begin{equation}\label{4.17}
\|f_1\|_{L^\fz(\boz)}\le
\blz\lf(\{\lz_0\az_0\}\bigcup\lf\{\lz_j\az_j\r\}_{j=1}^\fz\r)\ls\|f\|_{H_{\fai,\,L_N,\,\mathrm{rad}}(\boz)}.
\end{equation}
Let $\az$ be a $(\fai,\,\fz,\,M)_L$-atom associated with the ball $B_\boz:=B\cap\boz$.
By using the fact that, for any $x\in\boz$ and $t\in(0,\fz)$,
$$\int_\boz K_t(x,y)\,dy=1
$$
and an argument similar to that used in the proof of \cite[Lemma 9.1]{hlmmy}, we know that
$$\int_{\boz}\az(x)\,dx=0.
$$
Denote by $\wz{\az}$ the zero extension out of $\boz$ of $\az$. Then
$\int_{\rn}\wz{\az}(x)\,dx=0$, $\supp(\wz{\az})\subset B$ and
$$\|\wz{\az}\|_{L^\fz(\rn)}=\|\az\|_{L^\fz(\boz)}\le\|\mathbf{1}_{B\cap\boz}\|^{-1}_{L^\fai(\boz)}
\ls\|\mathbf{1}_{B}\|^{-1}_{L^\fai(\rn)}.$$
Thus, $\wz{\az}$ is a harmless constant multiple of a $(\fai,\,\fz,\,0)$-atom.

For any $j$, denote by $\wz{\az}_j$ the zero extension out of $\boz$ of $\az_j$.
Then, for any $j$, $\wz{\az}_j$ is a harmless constant multiple of a $(\fai,\,\fz,\,0)$-atom.
Let $f_2:=\sum_{j=1}^\fz\lz_j\az_j$ and $\wz{f}_2:=\sum_{j=1}^\fz\lz_j\wz{\az}_j$.
Then $\wz{f}_2$ is the zero extension out of $\boz$ of $f_2$,
$\wz{f}_2\in H_\fai(\rn)$ and
\begin{equation}\label{4.18}
\lf\|\wz{f}_2\r\|_{H_\fai(\rn)}\ls\blz\lf(\lf\{\lz_j\az_j\r\}_{j=1}^\fz\r)
\ls\|f\|_{H_{\fai,\,L_N,\,\mathrm{rad}}(\boz)},
\end{equation}
which further implies that $f_2\in H_{\fai,\,z}(\boz)$ and $\|f_2\|_{H_{\fai,\,z}(\boz)}\ls\|f\|_{H_{\fai,\,L_N,\,\mathrm{rad}}(\boz)}$.
From this, $f=f_1+f_2$, \eqref{4.17} and the definition of $\wz{H}_{\fai,\,z}(\boz)$, we deduce that
$$\|f\|_{\wz{H}_{\fai,\,z}(\boz)}\le\|f_1\|_{L^\fz(\boz)}+\|f_2\|_{H_{\fai,\,z}(\boz)}
\ls\|f\|_{H_{\fai,\,L_N,\,\mathrm{rad}}(\boz)}.
$$
This finishes the proof of \eqref{4.16} in the case that $\boz$ is bounded.
When $\boz$ is unbounded, similarly to the proof of \eqref{4.18}, we know that \eqref{4.16} also
holds true in this case.

We now show that
\begin{equation}\label{4.19}
\lf[\wz{H}_{\fai,\,z}(\boz)\cap L^2(\boz)\r]\subset\lf[H_{\fai,\,L_N,\,\mathrm{rad}}(\boz)\cap L^2(\boz)\r].
\end{equation}
Let $f\in[\wz{H}_{\fai,\,z}(\boz)\cap L^2(\boz)]$.
We first assume that $\boz$ is bounded. Then there exist $f_1\in L^\fz(\boz)$
and $f_2\in H_{\fai,\,z}(\boz)$ such that $f=f_1+f_2$ and
$$\|f_1\|_{L^\fz(\boz)}+\|f_2\|_{H_{\fai,\,z}(\boz)}\ls\|f\|_{\wz{H}_{\fai,\,z}(\boz)}.
$$
Let $\lz_0:=\|f_1\|_{L^\fz(\boz)}\|\mathbf{1}_\boz\|_{L^\fai(\boz)}$ and
$\az_0:=f_1/[\|f_1\|_{L^\fz(\boz)}\|\mathbf{1}_\boz\|_{L^\fai(\boz)}]$. Then $f_1=\lz_0\az_0$
and $\az_0$ is a $(\fai,\fz)$-single-atom. Thus, by Theorem \ref{t1.1}, we know that
$f_1\in H_{\fai,\,L_N,\,\mathrm{rad}}(\boz)$ and
\begin{equation}\label{4.20}
\|f_1\|_{H_{\fai,\,L_N,\,\mathrm{rad}}(\boz)}\ls\|f_1\|_{L^\fz(\boz)}.
\end{equation}

Let $\wz{f}_2$ be the zero extension out of $\boz$ of $f_2$.
Then $\wz{f}_2\in [H_{\fai}(\rn)\cap L^2(\rn)]$. For any $t\in(0,[\diam(\boz)]^2)$,
$x\in\boz$ and $y\in\boz$, let
$$F_{x,\,t}(y):= |B(x,\sqrt{t})\cap\boz|\exp\lf\{\frac{|x-y|^2}{ct}\r\}K_t(x,y).$$
Then $F_{x,\,t}$ is a bounded H\"older continuous function on $\boz$.
An argument similar to that used in \cite[p.\,156]{ar03} shows that $F_{x,\,t}$ can be
extended to a bounded H\"older continuous function on $\rn$ with the
H\"older index $\wz\dz\in (0,\dz_2)$. Denote this extension by
$\wz{F}_{x,\,t}$. For any $t\in(0,[\diam(\boz)]^2)$, $x\in\boz$ and $y\in\boz$,
let
$$\wz{K_t}(x,y):=|B(x,\sqrt{t})\cap\boz|^{-1}\exp\lf\{-\frac{|x-y|^2}{ct}\r\}\wz{F}_{x,\,t}(y).$$
Obviously, for any
$t\in(0,[\diam(\boz)]^2)$ and $x,\,y\in\boz$, $K_t (x,y)=\wz{K_t}(x,y)$.

By Lemma \ref{l4.1}, we find that there exists a sequence $\{b_j\}_j$ of
constant multiples of $(\fai,\,\fz,\,0)$-atoms such that
$\wz{f}_2=\sum_j b_j$ in $L^2(\rn)$, which implies that $\wz{f}=\sum_j
b_j$ almost everywhere. Thus, for any $t\in(0,\diam(\boz))$ and
$x\in\boz$,
$$e^{-t^2L_N}(f_2)(x)=\int_{\boz}K_{t^2}(x,y)f_2(y)\,dy=\int_{\rn}\wz{K}_{t^2}
(x,y)\wz{f}_2(y)\,dy=\sum_j \int_{\rn}\wz{K}_{t^2}(x,y)b_j(y)\,dy,
$$
which implies that $(f_2)^+_{L_N,\,\loc}\le\sum_j (b_j)^+_{L_N,\,\loc}$. From
this and an argument similar to \eqref{2.17}, we deduce that
$f_2\in [h_{\fai,\,L_N,\,\mathrm{rad}}(\boz)\cap L^2(\boz)]$
and
\begin{equation}\label{4.21}
\|f_2\|_{h_{\fai,\,L_N,\,\mathrm{rad}}(\boz)}\ls\|f_2\|_{H_{\fai,\,z}(\boz)},
\end{equation}
which, combined with \eqref{4.20} and $f=f_1+f_2$, further implies that
\begin{align*}
\|f\|_{h_{\fai,\,L_N,\,\mathrm{rad}}(\boz)}&\ls
\|f_1\|_{h_{\fai,\,L_N,\,\mathrm{rad}}(\boz)}+\|f_2\|_{h_{\fai,\,L_N,\,\mathrm{rad}}(\boz)}
\ls\|f_1\|_{H_{\fai,\,L_N,\,\mathrm{rad}}(\boz)}+\|f_2\|_{H_{\fai,\,z}(\boz)}\\
&\ls\|f_1\|_{L^\fz(\boz)}+\|f_2\|_{H_{\fai,\,z}(\boz)}\ls\|f\|_{\wz{H}_{\fai,\,z}(\boz)}.
\end{align*}
This finishes the proof of \eqref{4.19} in the case that $\boz$ is bounded.
When $\boz$ is unbounded, similarly to the proof of \eqref{4.21}, we know that
\eqref{4.19} also holds true in this case.

Thus,  by \eqref{4.16} and \eqref{4.19} and a
density argument, we conclude that the spaces $H_{\fai,\,L_N,\,\mathrm{rad}}(\boz)$,
and $\wz{H}_{\fai,\,z}(\boz)$ coincide with equivalent quasi-norms,
which completes the proof of Theorem \ref{t4.2}.
\end{proof}

Now, let $\fai$ be as in Definition \ref{d1.2} and $\boz$ a subdomain in $\rn$.
Then, for any $f\in\cs'(\rn)$ and $m\in\nn$,
the \emph{local non-tangential grand maximal function} $f^\ast_{m,\,\loc}$
of $f$ is defined by setting, for any $x\in\rn$,
$$f^\ast_{m,\,\loc}(x):=\sup_{\phi\in\cs_m(\rn)}\sup_{|y-x|<t,\,t\in(0,1)}|f\ast\phi_t(y)|.
$$
When $m(\fai):=\lfz n[q(\fai)/i(\fai)-1]\rfz$, where $q(\fai)$ and $i(\fai)$ are,
respectively, as in \eqref{1.6} and \eqref{1.5}, we denote $f^\ast_{m(\fai),\,\loc}$ simply
by $f^{\ast}_{\loc}$. The \emph{local Musielak-Orlicz-Hardy space} $h_{\fai}(\rn)$ is defined via replacing
$f^\ast$ by $f^{\ast}_{\loc}$ in Definition \ref{d4.2}. Moreover, the \emph{local Musielak-Orlicz-Hardy spaces}
$h_{\fai,\,z}(\boz)$ and $h_{\fai,\,r}(\boz)$ are defined via replacing $H_\fai(\rn)$ by $h_\fai(\rn)$
in the definitions of $H_{\fai,\,z}(\boz)$ and $H_{\fai,\,r}(\boz)$, respectively.

We then have the following conclusions.
\begin{corollary}\label{c4.3}
Let $\boz$ be a bounded Lipschitz domain in $\rn$.
\begin{itemize}
\item[{\rm(i)}] If $nq(\fai)/i(\fai)<n+\dz_1$, then
$H_{\fai,\,r}(\boz)=h_{\fai,\,r}(\boz)$
with equivalent quasi-norms, where $\dz_1\in(0,1]$ is as in \eqref{4.4}.

\item[{\rm(ii)}] If $nq(\fai)/i(\fai)<n+\dz_2$, then
$\wz{H}_{\fai,\,z}(\boz)=h_{\fai,\,z}(\boz)$ with equivalent quasi-norms,
where $\dz_2\in(0,1]$ is as in \eqref{4.15}.
\end{itemize}
\end{corollary}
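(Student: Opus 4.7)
The plan is to reduce both parts, via Theorems \ref{t4.1} and \ref{t4.2}, to comparisons between the ``geometric'' local Musielak-Orlicz-Hardy spaces and their operator-based local counterparts. Since $\boz$ is a bounded Lipschitz domain, $\boz^\complement$ is automatically unbounded, so Theorem \ref{t4.1} yields $H_{\fai,\,r}(\boz)=H_{\fai,\,L_D,\,\mathrm{max}}(\boz)=h_{\fai,\,L_D,\,\mathrm{max}}(\boz)$ with equivalent quasi-norms, and Theorem \ref{t4.2} yields the analogous chain for $\wz{H}_{\fai,\,z}(\boz)$, $H_{\fai,\,L_N,\,\mathrm{max}}(\boz)$, and $h_{\fai,\,L_N,\,\mathrm{max}}(\boz)$. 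Hence part (i) follows once one shows the identification $h_{\fai,\,r}(\boz) = h_{\fai,\,L_D,\,\mathrm{rad}}(\boz)$ with equivalent quasi-norms, and part (ii) follows from the analogous identification $h_{\fai,\,z}(\boz) = h_{\fai,\,L_N,\,\mathrm{rad}}(\boz)$.

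For part (i), I would establish $[h_{\fai,\,r}(\boz)\cap L^2(\boz)]\subset[h_{\fai,\,L_D,\,\mathrm{rad}}(\boz)\cap L^2(\boz)]$ by mimicking the proof of \eqref{4.5}: choose an extension $\wz{f}\in h_\fai(\rn)$ of $f$, invoke an atomic decomposition of $h_\fai(\rn)$ (whose atoms are either classical $(\fai,\fz,0)$-atoms on small balls or ``large'' local atoms without vanishing moments on balls of radius at least some fixed scale), and then control the local maximal function $(b)^+_{L_D,\,\loc}$ of each atom by an analog of estimate \eqref{4.7} for small atoms (using Assumption \ref{a3}), and by a direct Gaussian-upper-bound argument from Assumption \ref{a2} for large atoms, exploiting that the $t$-range $(0,\diam(\boz))$ is bounded. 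Summing via Lemma \ref{l2.2}(i) then gives the required quasi-norm control. The reverse inclusion adapts the proof of \eqref{4.9}: decompose $f\in h_{\fai,\,L_D,\,\mathrm{rad}}(\boz)\cap L^2(\boz)$ via Theorem \ref{t1.2} into a $(\fai,\fz)$-single-atom $\az_0$ plus $(\fai,\fz,M)_{L_D}$-atoms $\az_j$, then extend each $\az_j$ to $\rn$ using the Whitney-covering-plus-reflection construction from the proof of Theorem \ref{t4.1} (yielding $(\fai,\fz,0)$-atoms of $\rn$), while extending $\az_0$ by zero to a large-scale local atom of $h_\fai(\rn)$ (no vanishing moment required, since $\boz$ is bounded so the enclosing ball has radius comparable to $\diam(\boz)$).

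Part (ii) is handled in parallel, invoking Assumption \ref{a4} (heat-semigroup conservation) for $L_N$ in place of the moment structure available for $L_D$-atoms, and reinterpreting the $L^\fz(\boz)$-summand of $\wz{H}_{\fai,\,z}(\boz)$ as providing precisely the zero-extended large local atoms of $h_\fai(\rn)$. A standard density argument in both parts extends the identifications from the $L^2$-dense subspaces to the full quasi-Banach spaces. The main obstacle will be organizing the extension step so that the ``single-atom'' piece of the $h_{\fai,\,L_D,\,\mathrm{rad}}(\boz)$ decomposition (or the $L^\fz(\boz)$ piece of $\wz{H}_{\fai,\,z}(\boz)$), which has no analog in the unbounded case, extends to a legitimate large-scale atom of $h_\fai(\rn)$ with the correct quasi-norm bound; this requires carefully exploiting the bounded diameter of $\boz$ to ensure that the enclosing ball has radius bounded from below by a fixed positive constant, making the ``no moment condition'' feature of local Hardy atoms applicable.
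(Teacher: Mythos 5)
Your proposal is correct, and its core technical content coincides with the paper's: both reduce the corollary, via the bounded cases of Theorems \ref{t4.1} and \ref{t4.2}, to comparing $h_{\fai,\,r}(\boz)$ with $h_{\fai,\,L_D,\,\mathrm{rad}}(\boz)$ and $h_{\fai,\,z}(\boz)$ with $h_{\fai,\,L_N,\,\mathrm{rad}}(\boz)$, and both prove the forward inclusions by pushing the local atomic decomposition of $h_\fai(\rn)$ through the kernel estimates (small atoms with vanishing moments via the H\"older continuity \eqref{4.4}/\eqref{4.15}, large local atoms via the Gaussian bound together with the fact that only boundedly many dyadic annuli around a ball of radius $\gtrsim 1$ meet the bounded set $\boz$); these are exactly the paper's inclusions \eqref{4.22} and \eqref{4.25}. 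The one organizational difference is that your reverse inclusions are redundant: Theorem \ref{t4.1} already gives $h_{\fai,\,L_D,\,\mathrm{rad}}(\boz)=H_{\fai,\,r}(\boz)$, and trivially $H_\fai(\rn)\subset h_\fai(\rn)$ so that $H_{\fai,\,r}(\boz)\subset h_{\fai,\,r}(\boz)$; likewise Theorem \ref{t4.2} gives $h_{\fai,\,L_N,\,\mathrm{rad}}(\boz)=\wz{H}_{\fai,\,z}(\boz)$, which sits inside $h_{\fai,\,z}(\boz)$ by the same trivial containment plus the observation that the $L^\fz(\boz)$ summand zero-extends into $h_\fai(\rn)$ (the paper's \eqref{4.24}). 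The paper therefore closes a cycle of inclusions using \eqref{4.9}, \eqref{4.16} and Theorem \ref{t1.2} and never has to re-run the Whitney-plus-reflection extension of the operator-adapted atomic decomposition at the local level. Your direct extension argument for that step — in particular zero-extending the single-atom piece to a large-scale local atom, which is legitimate precisely because $\diam(\boz)<\fz$ bounds the enclosing ball's radius from below after a harmless dilation — is sound, but it duplicates work already encoded in Theorems \ref{t4.1}, \ref{t4.2} and \ref{t1.2}.
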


\begin{proof}
To show (i), by using the atomic characterization of $h_\fai(\rn)$ established in \cite[Theorem 5.7]{yys1} and similarly to the proof of \eqref{4.5}, we know that
\begin{equation}\label{4.22}
\lf[h_{\fai,\,r}(\boz)\cap L^2 (\boz)\r]\subset\lf[h_{\fai,\,L_D,\,\mathrm{rad}}(\boz)\cap L^2(\boz)\r].
\end{equation}
Moreover, from \eqref{4.9} and the fact that $H_\fai(\rn)\subset h_\fai(\rn)$, it follows that
\begin{equation}\label{4.23}
\lf[H_{\fai,\,L_D,\,\mathrm{rad}}(\boz)\cap L^2(\boz)\r]\subset\lf[H_{\fai,\,r}(\boz)\cap L^2 (\boz)\r]\subset\lf[h_{\fai,\,r}(\boz)\cap L^2 (\boz)\r].
\end{equation}
By Theorem \ref{t1.2}, we conclude that
$$\lf[H_{\fai,\,L_D,\,\mathrm{rad}}(\boz)\cap L^2(\boz)\r]=\lf[h_{\fai,\,L_D,\,\mathrm{rad}}(\boz)\cap L^2(\boz)\r]
$$
with equivalent quasi-norms, which, combined with \eqref{4.22} and \eqref{4.23}, further implies that
$$\lf[H_{\fai,\,r}(\boz)\cap L^2(\boz)\r]=\lf[h_{\fai,\,r}(\boz)\cap L^2(\boz)\r].
$$
From this and the facts that $H_{\fai,\,r}(\boz)\cap L^2(\boz)$ and
$h_{\fai,\,r}(\boz)\cap L^2(\boz)$ are dense, respectively, in the spaces
$H_{\fai,\,r}(\boz)$ and $h_{\fai,\,r}(\boz)$, and a density argument,
we deduce that the spaces $H_{\fai,\,r}(\boz)=h_{\fai,\,r}(\boz)$
with equivalent quasi-norms, which completes the proof of (i).

To show (ii), from the definitions of $H_{\fai,\,z}(\boz)$ and
$h_{\fai,\,z}(\boz)$ and the atomic characterizations of $H_{\fai}(\rn)$ and $h_{\fai}(\rn)$ (see, for example, \cite[Theorem 5.7]{yys1}), we deduce that
\begin{equation}\label{4.24}
\lf[\wz{H}_{\fai,\,z}(\boz)\cap L^2(\boz)\r]\subset\lf[h_{\fai,\,z}(\boz)\cap L^2(\boz)\r].
\end{equation}
By using the atomic characterization of $h_\fai(\rn)$ and similarly to the proof of \eqref{4.19}, we find that
\begin{equation}\label{4.25}
\lf[h_{\fai,\,z}(\boz)\cap L^2 (\boz)\r]\subset\lf[h_{\fai,\,L_N,\,\mathrm{rad}}(\boz)\cap L^2(\boz)\r].
\end{equation}
From Theorem \ref{t1.2}, it follows that
$$\lf[H_{\fai,\,L_N,\,\mathrm{rad}}(\boz)\cap L^2(\boz)\r]=\lf[h_{\fai,\,L_N,\,\mathrm{rad}}(\boz)\cap L^2(\boz)\r]
$$
with equivalent quasi-norms, which, together with \eqref{4.16}, \eqref{4.24} and \eqref{4.25}, further implies that
$$\lf[\wz{H}_{\fai,\,z}(\boz)\cap L^2(\boz)\r]=\lf[h_{\fai,\,z}(\boz)\cap L^2(\boz)\r].
$$
By this, we further know that $\wz{H}_{\fai,\,z}(\boz)=h_{\fai,\,z}(\boz)$
with equivalent quasi-norms.
This finishes the proof of (ii) and hence of Corollary \ref{c4.3}.
\end{proof}

When $\fai(x,t):=t^p$, with $p\in(0,1]$, for any $x\in\boz$ and $t\in[0,\fz)$,
we denote the spaces $h_{\fai,\,r}(\boz)$ and $h_{\fai,\,z}(\boz)$, respectively,
simply by $h^p_{r}(\boz)$ and $h^p_{z}(\boz)$. Then we have the following remark.

\begin{remark}\label{r4.4}
The equivalences $H^1_{r}(\boz)=h^1_{r}(\boz)$ and $\wz{H}^1_{z}(\boz)=h^1_{z}(\boz)$
were obtained in \cite[Remark 17]{ar03}. When $p\in(\frac{n}{n+\dz_1},1)$ or $p\in(\frac{n}{n+\dz_2},1)$,
the corresponding equivalences $H^p_{r}(\boz)=h^p_{r}(\boz)$ and $\wz{H}^p_{z}(\boz)=h^p_{z}(\boz)$
obtained in Corollary \ref{c4.3} are new.
\end{remark}

\bigskip

\noindent Sibei Yang

\medskip

\noindent School of Mathematics and Statistics, Gansu Key Laboratory of Applied Mathematics and
Complex Systems, Lanzhou University, Lanzhou 730000, People's Republic of China

\smallskip

\noindent{\it E-mail:} \texttt{yangsb@lzu.edu.cn}

\bigskip

\noindent Dachun Yang (Corresponding author)

\medskip

\noindent Laboratory of Mathematics and Complex Systems (Ministry of Education of China),
School of Mathematical Sciences, Beijing Normal University, Beijing 100875, People's Republic of China

\smallskip

\noindent{\it E-mail:} \texttt{dcyang@bnu.edu.cn}

\end{document}